\definecolor{darkblue}{rgb}{0.0,0,0.7}
\newcommand{\darkblue}{\color{darkblue}}
\definecolor{darkred}{rgb}{0.68,0,0}
\newcommand{\darkred}{\color{darkred}}
\definecolor{darkgreen}{rgb}{0,.38,0}
\newcommand{\darkgreen}{\color{darkgreen}}
\newcommand{\defn}[1]{\emph{\darkblue #1}}
\newcommand{\defna}[1]{\emph{\darkred #1}}
\newcommand{\defng}[1]{\emph{\darkgreen #1}}
\def\th@plain{%
	\thm@notefont{}
	\itshape 
}
\def\th@definition{%
	\thm@notefont{}
	\normalfont 
}
\newtheorem{thm}{Theorem}[section]
\newtheorem*{claim*}{Claim}
\newtheorem{prop}[thm]{Proposition}
\newtheorem{conj}[thm]{Conjecture}
\newtheorem{op}[thm]{Open Problem}
\theoremstyle{definition}
\newtheorem{ex}[thm]{Example}
\newtheorem{rem}[thm]{Remark}
\numberwithin{figure}{section}
\numberwithin{equation}{section}
\def\wh{\widehat}
\def\sq{\square}
\def\zz{\mathbb Z}
\def\nn{\mathbb N}
\def\cc{\mathbb C}
\def\rr{\mathbb R}
\def\qqq{\mathbb Q}
\def\ff{\mathbb F}
\def\SS{\mathbb S}
\def\pp{\mathbb P}
\def\ov{\overline}
\def\sm{\smallsetminus}
\def\Ga{\Gamma}
\def\Om{\Omega}
\def\la{\lambda}
\def\ga{\gamma}
\def\si{\sigma}
\def\de{\delta}
\def\al{\alpha}
\def\be{\beta}
\def\om{\omega}
\def\vp{\varphi}
\def\cC{\mathcal C}
\def\cB{\mathcal B}
\def\cA{\mathcal A}
\def\cd{\mathcal D}
\def\cT{\mathcal T}
\def\ssu{\subset}
\def\sss{\supset}
\def\wt{\widetilde}
\def\<{\langle}
\def\>{\rangle}
\def\PM{\text{{\rm PM}}}
\def\sign{\text{\rm sign}}
\def\GL{ {\text {\rm GL} } }
\def\rG{ {\text {\rm F} } }
\def\Cat{ {\text {\rm Cat} } }
\def\Aut{ {\text {\rm Aut} } }
\def\rg{\overline{g}}
\def\rT{{\text {\rm T} } }
\def\rK{{\text{\text{K}}}}
\def\St{{\text {\rm Stab} } }
\def\0{{\mathbf 0}}
\def\ED{{\mathcal{E}}}
\def\.{\hskip.06cm}
\def\ts{\hskip.03cm}
\def\lra{\leftrightarrow}
\def\bx{{\textbf{\textit{x}}}}
\def\by{{\textbf{\textit{y}}}}
\def\bz{{\textbf{\textit{z}}}}
\def\ba{\textbf{\textit{a}}}
\def\bb{\textbf{\textit{b}}}
\def\bK{\textbf{\textit{K}}}
\def\La{\Lambda}
\def\di{{\small{\ts\diamond\ts}}}
\def\CI{{\mathcal{C}}}
\def\BG{{\mathcal{B}}}
\def\Mon{{\mathcal{M}}}
\def\Obli{{\mathcal{O}}}
\newcommand{\maj}{\mathrm{maj}}
\newcommand{\jdt}{\mathrm{jdt}}
\newcommand{\PP}{\operatorname{PP}}
\newcommand{\SSYT}{\operatorname{SSYT}}
\newcommand{\SYT}{\operatorname{{\rm SYT}}}
\newcommand{\SVT}{\operatorname{{\rm SVT}}}
\newcommand{\IT}{\operatorname{{\rm IT}}}
\newcommand{\LR}{\operatorname{{\rm LR}}}
\newcommand{\RHT}{\operatorname{{\rm RHT}}}
\newcommand{\Cor}{\mathrm{Cor}}
\newcommand{\RC}{\operatorname{{\text{RC}}}}
\newcommand{\Sch}{\operatorname{\mathfrak{S}}}
\def\aN{\textrm{N}}
\def\.{\hskip.06cm}
\def\ts{\hskip.03cm}
\def\nin{\noindent}
\def\SP{{\textsf{\#P}}}
\def\SPr{{{{\sf \#P}}}}
\def\SEXP{{\textsf{\#EXP}}}
\def\NP{{\textsf{NP}}}
\def\coNP{{\textsf{coNP}}}
\def\PSPACE{{\textsf{PSPACE}}}
\def\PPA{{\textsf{PPA}}}
\def\GapP{{\textsf{GapP}}}
\def\GapPr{{\sf{GapP}}}
\def\NPr{{{\sf NP}}}
\def\FPr{{{\sf FP}}}
\def\coNPr{{{\sf coNP}}}
\def\PH{{\textsf{PH}}}
\def\PHr{{{\sf PH}}}
\def\CeqP{{{\sf C_=P}}}
\def\CCF{{\textsf{CCF}}}
\def\coNP{{\textsf{coNP}}}
\def\FP{{\textsf{FP}}}
\def\UP{{\textsf{UP}}}
\def\coUP{{\textsf{coUP}}}
\def\poly{{\textsf{P}}}
\def\Pr{{{\sf P}}}
\def\GapP{{\textsf{GapP}}}
\newcommand{\inv}{\operatorname{{\rm inv}}}
\DeclareMathOperator{\Ec}{\mathcal{E}} 
\title{What is a combinatorial interpretation?}
\date{\today}
\thanks{\thinspace ${\hspace{-.54cm}}^\star$Department of Mathematics,
UCLA, Los Angeles, CA 90095. \ \, Email: \.
\texttt{(pak@)math.ucla.edu}.
}
\author[\ Igor Pak]{Igor Pak$^\star$}
\begin{document}

\begin{abstract}
In this survey we discuss the notion of \emph{combinatorial
interpretation} in the context of Algebraic Combinatorics and related
areas.  We approach the subject from the Computational Complexity
perspective.  We review many examples, state a workable definition,
discuss many open problems, and present recent results on the subject.
\end{abstract}
	
\maketitle
	

\section{Introduction}

\medskip

\subsection{What numbers?}\label{ss:intro-numbers}
Traditionally, \defna{Combinatorics} \ts works with numbers.  Not with structures,
relations between the structures, or connections between the relations ---
just numbers.  These numbers tend to be nonnegative integers, presented
in the form of some exact formula or disguised as probability.  More
importantly, they always count the number of some combinatorial objects.

This approach, with its misleading simplicity, led to a long series
of amazing discoveries, too long to be recounted here.  It turns out
that many interesting combinatorial objects satisfy some formal
relationships allowing for their numbers to be analyzed.
More impressively, the very same combinatorial objects
appear in a number of applications across the sciences.

Now, as structures are added to Combinatorics, the nature of the numbers
and our relationship to them changes.  They no longer count something
explicit or tangible, but rather something ephemeral or esoteric,
which can only be understood by invoking further results in the area.
Even when you think you are counting something combinatorial,
it might take a theorem or a even the whole theory to realize
that what you are counting is well defined (see e.g.~$\S$\ref{ss:seq-knots}).

This is especially true in \defna{Algebraic Combinatorics} \ts where the numbers
can be, for example, dimensions of invariant spaces, weight multiplicities or
Betti numbers.  Clearly, all these numbers are nonnegative integers,
but as defined they do not count anything per se, at least in the
most obvious or natural way.

\smallskip

\subsection{Why combinatorial interpretations?}\label{ss:intro-CI}
This brings us to the popular belief that one should always look
for a combinatorial interpretation (see~$\S$\ref{ss:finrem-hist}).
As we see it, there are two reasons for this.

The first reason is clear:
when you know what you are counting you have access to a
large toolkit already developed in Enumerative Combinatorics and areas
further afield.  Essentially, the explicit combinatorial objects serve
as a common playground where the areas can meet and be understood
(see~$\S$\ref{ss:Main-examples}).

\smallskip

The second, deeper reason, is largely based on the hope is that
a combinatorial interpretation would reveal some structures hidden
in the algebraic objects they are working with.  One can think of
combinatorial interpretations as projections --- you gather
enough projections and hope the whole structure emerges.
Consequently, when a combinatorial interpretation is found
it is hard to tell if it points to a new structure without
further study.

\smallskip

From our perspective, the first reason is terrific
and can bring a lot of activity as new combinatorial
objects rise to prominence in the areas spurred by applications.
Meanwhile, the second reason is unfortunate and indicates that
the area does not have a workable definition of a
``combinatorial interpretation''.  This
brings us to the following difficult question.


\smallskip

\subsection{What do we mean by a combinatorial interpretation?}\label{ss:intro-def}
Well, this is what this survey is about.  The short answer is $\ts \SP$,
a computational complexity class which we discuss
at length.

But before we proceed, let us make a trivial comment.  The first step
towards building a theory is admitting the need for a formal definition.
Without that, the negative results are impossible to state while the
positive results ring hollow and cannot be fully appreciated for the
miracle that they are.

According to Popper's philosophy, a belief needs to be \defna{disprovable}
in order to be \defna{scientific} \cite{Pop62}.  Those who have
unquestionable faith in the existence of combinatorial interpretations
for all problems they care about, might want to take this lesson to heart.

\smallskip

\subsection{There is no there there}\label{ss:intro-NOCI}
We argue that many (all?) long-standing open problems on combinatorial
interpretations in Algebraic Combinatorics can be formalized and resolved.
We believe that few if any of them will have a solution of the kind that
people in the area are looking for.

We aim to give a negative solution to many of the combinatorial interpretation
problems using a formal definition we mentioned above.
The goal of this paper is to advance this as part of a larger
project.  Until recently, this seemed overly ambitious and beyond the reach.
Hopefully, this survey will leave you more optimistic.

\smallskip

\subsection{Why bother?}\label{ss:intro-bother}
Given that until recently the notion of ``combinatorial interpretation''
had been informal, why set on a quixotic journey?  Let us frame the
question in much broader terms, from the perspective of
\defn{Computational Combinatorics}.
There are two fundamental questions we want to address in our study:

\smallskip

\qquad
$(1)$ \.
How do you prove that given numbers \emph{have} a \ts ($\SP$) \ts combinatorial interpretation?

\qquad
$(2)$ \.
How do you prove that they \emph{do not}?

\smallskip

\nin
Note that we are not so much interested in \emph{finding} \ts
an explicit combinatorial interpretation, just proving membership
in~$\SP$ suffices for our purposes.
Mostly, we are interested in development of new tools
coming both from Combinatorics and Computational Complexity,
to resolve the questions above.

In the last few decades, the area of Algebraic Combinatorics did a
great job proving relevance and applicability beyond its boundaries.
From this point of view, the open problems of finding combinatorial
interpretations for numbers such as the \defng{Kronecker} and
\defng{Schubert coefficients} are key benchmarks.  Resolving them
in either direction would be an important achievement in the
whole Mathematics.

\smallskip

\subsection{Why computational complexity?}\label{ss:intro-CC}
In other words, is there perhaps a more suitable and easier to
understand language in which the problem can be phrased?
Perhaps, in terms of integer points in convex polytopes or
tilings in the plane?  Indeed, both type of combinatorial
interpretations do frequently appear in Algebraic Combinatorics
and may seem like the natural place to start.

This is both the easiest and the hardest question to answer.  The short
answer is this: ``Computational complexity provides the broadest and
the most flexible language''.  In fact, prior to converging
to~$\SP$, we tried a handful of approaches including the ones above.
Since the types of ``combinatorial interpretations'' they gave were
rather constrained, we reasoned that negative results would be
easier to obtain.  Eventually we discarded all such formulations
as unconvincing and resigned to the fundamental difficulty and
its ever conditional nature of Computational Complexity.

As we see it now, the Computational Complexity is truly
foundational for the whole of Mathematics, and allows
one to ask questions on the deeper level.  While we constrain
ourselves with the problem at hand, we refer to~\cite{Wig19}
for the general picture.

\smallskip

\subsection{What to expect from this survey}\label{ss:intro-WTE}
We assume that the reader is familiar with standard notation, results
and ideas in Algebraic Combinatorics, see e.g.\ \cite{Mac95,Man01,Sag01,Sta99}.
After some hesitation, we decided to assume the same about
Computational Complexity.  We realize this might be unreasonable,
and we will provide plenty of examples, but in the 21st century,
a research survey is probably not the best place to include a long
list of basic definitions.

We do include a quick overview of basic complexity notions
(Section~\ref{s:basic}), but stop short of getting technical.
For more details, we recommend \cite{MM} which is a fun read,
\cite{AB09,Pap} for standard textbooks on the subject, and
\cite{Aa} for a very readable survey.


\subsection{Structure of the paper}\label{ss:intro-structure}
In Section~\ref{s:basic}, we introduce the computational complexity
language, followed by Section~\ref{s:Main} with many motivating examples.
In Section~\ref{s:seq}, we discuss many examples from Enumerative Combinatorics,
the original motivation for combinatorial interpretations (cf.~$\S$\ref{ss:finrem-hist}).
In Sections~\ref{s:sub} and~\ref{s:LE} we survey inequalities in Probabilistic
Combinatorics and Order Theory, respectively.  The selection of results is
somewhat biased and reflects some of our own interests.  The goal is prepare
the reader for more difficult problems later on.

In Sections~\ref{s:tab}--\ref{s:Schubert} is the core part of the survey.
Here we discuss many functions in
Algebraic Combinatorics centered around four subjects: Young tableaux,
$S_n$~characters, Kronecker and Schubert coefficients.  These should be
read in order, as they build on top of each other.  Sections~\ref{s:Kron}
and~\ref{s:Schubert} have polemical portions at the end, which some readers
might disagree with.

The last part of the survey presets our effort to organize the earlier material
and make digestible conclusions.  In Section~\ref{s:Sym}, we discuss results and
bijections in Algebraic Combinatorics centered around the \defng{LR~rule}
and the \defng{RSK correspondence}, which we
consider crucially important to the subject.  In Section~\ref{s:no}, we discuss
our recent work \cite{IP22} which develops tools to prove nonexistence of
combinatorial interpretations.

In Section~\ref{s:addendum}, we give an annotated list of $\SP$-completeness
and $\SP$-hardness results that we omitted earlier to avoid the confusion.
These last three sections are independent from each other and should be
accessible to experts in the area who skipped earlier section.
We conclude with proofs postponed from earlier sections (Section~\ref{s:proofs}),
and some final remarks (Section~\ref{s:finrem}).

\smallskip

\subsection*{Notation}
Let $\{0,1\}^n$ denote the set of sequences of zeros and ones of length~$n$,
and let $\{0,1\}^\ast$ the set of all such sequences of finite length.
We use \ts $\nn=\{0,1,2,\ldots\}
$ \ts and \ts $[n]=\{1,\ldots,n\}$.  The rest is
pretty self-explanatory.
\bigskip

\section{Basic Computational Combinatorics}\label{s:basic}

For the purposes of this survey, we will make numerous
shortcuts and imprecise statements, largely sacrificing standard
definitions and rigor for the sake of clarity and conciseness.  We also make
our focus quite a bit more narrow than it could be.  We beg
forgiveness to the experts.

\smallskip

\subsection{Combinatorial objects}\label{ss:basic-obj}
The notion of a \defna{combinatorial object} can be viewed as follows.
A \defn{word} is a binary sequence $x\in \{0,1\}^\ast$.
The \defn{size} of $x$ is the length~$|x|$.   In other words,
combinatorial objects of size~$N$ are encoded by words
of length~$N$, which in turn correspond to integers
\ts $0\le a <2^N$.\footnote{We realize this is not how some
combinatorialists think of combinatorial objects.}  For example,
a simple graph on $n$ vertices can be viewed as
a word of length $\binom{n}{2}$.

Note that we view \emph{combinatorial object} as more than
an abstract concept because the definition includes the
\defn{presentation}.  For example, a simple graph on $n$ vertices
and $m$ edges can also be presented as a list of edges.
The resulting word would be of size $\Theta(m\log n)$.  This makes some algorithms faster
and other slower, but since $m=O(n^2)$ the change is at most polynomial
for general graphs, so we can ignore it.

On the other hand, the presentation can make a lot of difference for
problems in Algebraic Combinatorics.  For example, a partition \ts
$\la=(4,3,1)\vdash 8$ \ts can be written in \defn{binary} \ts $(100,11,1)$ \ts
or in \defn{unary} \ts $(1111,111,1)$.  The binary presentation is more compact, so
partitions $\la=(\la_1,\ldots,\la_\ell)\vdash n$ with $\ell=O(1)$ have
size $O(\log n)$.

On the other hand, for many problems in Algebraic Combinatorics
the unary presentation of size $O(n)$ is more appropriate.
For example if your problem involves \defng{self-conjugate}
partitions $\la=\la'\vdash n$, we have $\ell(\la)\ge \sqrt{n}$,
so the binary presentation still requires poly$(n)$ space.\footnote{Occasionally,
people use \defng{Frobenius coordinates} \ts specifically to
avoid this issue for partitions with bounded \defng{Durfee size}. }
Similarly, the unary presentation is more natural when one works
with \defng{Young diagrams}.  In summary, every time the problem
involves partitions, one should \emph{always} state whether partitions
are given in unary or in binary.

\smallskip

\subsection{Classes and functions}\label{ss:basic-class}
The notion of a \defna{combinatorial class} can be viewed
as follows.  Define the \defn{language} as a subset of
binary words: $L\subseteq\{0,1\}^*$.  For example,
we can consider the language of words which encode all
Hamiltonian graphs.  Similarly, we can consider the
language of unary encodings of Young tableaux counted by the
LR~rule, i.e.\ for every triple of partitions $(\la,\mu,\nu)$,
there are exactly $c^\la_{\mu\nu}$ words in the corresponding
language.

We write $\overline{L} := \{0,1\}^*\setminus L$ to denote the
\defn{complement} of~$L$.  There is some ambiguity here depending
on the presentation, but in principle $\overline{L}$ can be very
large.  For example, the complement to the language of
Hamiltonian graphs includes both non-Hamiltonian graphs and all
words corresponding to non-graphs.

We usually think of a \defna{combinatorial function} as a function
counting combinatorial objects.  This can be viewed as special case
of the following general definition.  Let $f: \{0,1\}^\ast\to \zz$ an
integer function. Suppose $f(x) = 0$ for $x\notin L$, and $f(x)\ge 1$
for $x\in L$.  We then say that $f$ is \defn{supported on~$L$}.

For example, the number of Hamiltonian cycles is a function supported
on Hamiltonian graphs.  Similarly, the LR~coefficients is a function
\ts $f: (\la,\mu,\nu) \to c^\la_{\mu\nu}$ \ts supported on triples
of partitions where LR~coefficients are nonzero.

Note that the language can be defined in a variety of ways: by an
explicit function~$f$, by a Turing machine, by a formal grammar
by an explicit mathematical definition, or by an abstract logical
construction.  It is important to keep in mind that complexity of~$L$
can reflect complexity of its definition, but that does not always hold.

For example, the set of
exponents~$n$ for which the \defng{Fermat Last Theorem} (FLT) holds, naturally
correspond to a  language \ts $L=\{3,4,5,\ldots\}$.  From the
mathematical point of view, proving that $n\in L$ becomes increasingly
more difficult as $n$ grows, but as a language $L$ is rather simple now
that FLT has been proved.

\smallskip

\subsection{Decision, search and counting problems}\label{ss:basic-problems}
A \defn{decision problem} is a computational problem defined by
the membership in the language.  For example, \textsc{Hamiltonicity} is a
problem whether the language of Hamiltonian graphs contains a
word corresponding to given graph.  Similarly, the
non-vanishing of LR~coefficients problem \ts $c^\la_{\mu\nu}>^?0$
is a problem whether a triple of partitions $(\la,\mu,\nu)$ written in unary
is in the language given as a support of the LR~coefficients.

A \defn{search problems} is similar to the decision problem and asks
not only to decide $x\in^? L$, but also to \defng{verify} the answer by
providing a \defn{witness}.  We formalize this notion below, but for now
let us think of this qualitatively rather than quantitatively.

For example, the search problem associated with \textsc{Hamiltonicity}
ask to find a Hamiltonian cycle, since that implies that Hamiltonicity
by definition.  On the other hand, for \textsc{Non-Hamiltonicity}
there is no natural witness as nonexistence of a Hamiltonian cycle
cannot be easily characterized (for a very good reason, see below).
The verifier can simply list all $2^m$ subgraphs and supplant
a proof that none are a Hamiltonian cycle.

While seemingly harder, in some cases it is possible to use the decision
problem as a black box to solve the search problem, by applying
it repeatedly for smaller instances. For example, given graph~$G$,
if $G$ has a Hamiltonian cycle, check if so does $G\sm e$.
Continue removing edges until eventually some edge cannot
be removed.  Suppose now $G\sm e$ does not have Hamiltonian
cycles.  This means that $e$ is an edge in \emph{every} Hamiltonian
cycle in~$G$.  This reduced the problem to finding a Hamiltonian cycle
in~$G/e$ obtained from~$G$ as contraction by~$e$.  Proceed in this fashion
until the whole Hamiltonian cycle is constructed.

Given a search problem, a \defn{counting problem} is a problem of
computing a function~$f$ given by the number of witnesses the verifier
can accept. So if $x\notin L$, i.e.\ the decision problem
has a negative answer, the function $f(x):=0$.  Otherwise,
the function $f(x)\ge 1$ for all $x\in L$, i.e.\ function~$f$ is
supported on~$L$.

For example, the number of Hamiltonian cycles is a function supported
on Hamiltonian graphs which naturally arises that way.  Similarly,
but less obviously, the LR-coefficients $c^\la_{\mu\nu}$ is the counting
function for the number of LR tableaux.

\smallskip

\subsection{Polynomial time problems}\label{ss:basic-CC}
Until now we avoided using the words \defna{polynomial time}, since it
makes definitions quantitative and unnecessarily complicates the matter.
But we need it from this point.

The first truly important class for us is $\poly$.  It is a class of languages
where the decision problem can be solved in polynomial time.  There is a wide
variety of problems in this class, for example testing whether a graph is
connected or bipartite.  More involved graph theoretic problems in $\poly$
include planarity and having a perfect matching.\footnote{These follow
from the \emph{Kuratowski theorem} and the \emph{blossom algorithm}, resp.,
see e.g.~\cite[$\S$3.1,$\.\S$24.2]{Schr03}. }

Historically, there was a variety of ways to formalize the definition
of~$\poly$, all of which turn out to be equivalent.
We will use a \defng{Turing machine} (TM) mostly out of habit and because it is
best known (compared to \defng{RAM} and other equivalent models of computation).
From our point of view, using the colloquial \defn{polynomial time algorithm}
is absolutely fine.

We distinguish class $\poly$ from the class $\FP$ of nonnegative functions which can be
computed in polynomial time.\footnote{Some experts use a different definition
of $\FP$.  The one we use is more common in Counting Complexity. }
To remember the difference, note that the former
outputs $0$ or~$1$, while the latter can output larger numbers.  Simple examples
of functions in $\FP$ include the number of connected components of a graph,
the number of (proper) $2$-colorings, and the number of spanning
trees.\footnote{The latter follows from the \emph{matrix-tree theorem}.}

\smallskip

\subsection{Polynomial time verifier}\label{ss:basic-ver}
Now, given a language $L$, the \defn{polynomial time verifier} is a Turing machine~$M$
such that for some fixed polynomials $p,q$ we have:

\smallskip

$\circ$ \ts for all \ts $x,w\in \{0,1\}^\ast$, we have \ts $M(x,w)\in \{0,1\}$,

$\circ$ \ts for all \ts $x\in \{0,1\}^n$, machine $M$ runs in time $\le p(n)$,

$\circ$ \ts for all \ts $x\in L\cap \{0,1\}^n$, there exists $w$, s.t.\ \ts $|w|\le q(n)$ \ts and \ts $M(x,w)=1$,

$\circ$ \ts for all \ts $x\in \{0,1\}^n \sm L$ \ts and all $w$ s.t.\ \ts $|w|\le q(n)$, we have \ts $M(x,w)=0$.

\smallskip

\nin
In particular, the verifier \emph{accepts}, i.e.\ outputs~$1$, only if $w$ is a witness
for $x\in L$.
Note that the witness~$w$ have to have polynomial size to avoid the type of witnesses
we had seen in the Non-Hamiltonicity problem.  This constraint is also necessary for $M$
to work polynomial time, since otherwise it would take exponential time just to read
the exhaustive list of subgraphs in this case.

Continuing with our favorite example, in the \textsc{HamiltonianCycle} search problem,
the verifier checks if a collection of edges (this is~$w$) is a Hamiltonian cycle
in graph~$G$ (this is $x$ in the notation above).  Clearly, this can be done in
polynomial time.  Similarly, in the \textsc{LR~Tableau} search problem,
a Young tableau can be verified to be a LR~tableau corresponding to a triple
$(\la,\mu,\nu)$ in polynomial time.  This is done by checking all equalities
for the shape and the content, and all inequalities involved in the definition:
non-increase in rows, strict increase in columns, and balance
conditions for the right-to-left reading word.

\smallskip

\subsection{Complexity classes}\label{ss:basic-classes}
Complexity class $\NP$ is the class of decision problems $x \in^? L$
for which there exists a polynomial time verifier.
Similarly, class $\coNP$ is the class of decision problems
$x \in^? L$, such that there exists a polynomial time verifier
for the complementary problem $x \in^? \ov{L}$.

For example, \textsc{Hamiltonicity} $\in \NP$
and \textsc{Non-Hamiltonicity} $\in\coNP$.  Clearly, $\poly \subseteq \NP \cap \coNP$.
There are several hard decision problems known to be in $\NP \cap \coNP$,
so it is conjectured that $\poly \ne \NP \cap \coNP$.  It is also conjectured
that $\NP \ne \coNP$.  It is known that $\poly\ne \NP$ would not imply either
of these two conjectures (see e.g.~\cite[$\S$2.2.3]{Aa}).

Next, complexity class $\SP$ is the class of counting functions
for which there exists a polynomial time verifier.  Formally,
a function $f:\{0,1\}^\ast\to \nn$ is in $\SP$ if there exists a
polynomial time verifier~$M$ and  polynomial $q:\nn\to \nn$, such that for all $n\in \nn$  we have:
$$f(x) \, = \, \big|\big\{w\in \{0,1\}^{q(n)}:~M(x,w)=1\big\}\big| \quad \text{for all} \quad x\in \{0,1\}^n.
$$

Observe that \ts $\FP\subseteq \SP$.
Indeed, for $f\in \FP$ the witness for $x$
is any integer in \ts $a\in \big\{0,\ldots, f(x)-1\big\}$, and the verifier first
computes $f(x)$ and then checks if \ts $a< f(x)$.
It is widely assumed that $\FP\ne\SP$. In fact, it is hard to overstate how
strong is this assumption.

For example, let $f(G)$ be the number of
Hamiltonian cycles in~$G$.  Then $f\in \SP$ since it is
counting combinatorial objects which can be verified
in polynomial time. Similarly, LR~coefficients are given as a
function \ts $(\la,\mu,\nu) \to c^\la_{\mu\nu}$ \ts is also in~$\SP$,
since it is counting LR-tableaux which can be verified in
polynomial time by the argument above.  We give many more examples
in the next section.

Note that we do not discuss problems that are $\NP$-complete or
$\SP$-complete.  That's largely because these notions are largely
tangential to this survey.  Like with other complexity
classes and standard computational complexity notation, we will mention
them at will when we need them and hope the reader catches us.
Here is a partial list as a mental check for the reader:
$$
\poly \, \subseteq \,\UP\, \subseteq \, \NP \, \subseteq \,  \Sigma_2^{\textsc{p}} \, \subseteq \, \PH\, \subseteq \, \PSPACE\ts.
$$

We do want to emphasize the distinction of
$\NP$-complete and $\NP$-hard classes -- the former
is contained in~$\NP$, while the latter does not.  The same
with $\SP$-complete and $\SP$-hard classes.

\bigskip

\section{Combinatorial interpretation, first steps}\label{s:Main}

\subsection{Main definition} \label{ss:Main-Def}

We will be brief.  Let $f:\{0,1\}^\ast \to \nn$
be a function.  We say that $f$ has a
\defn{combinatorial interpretation} \. if
{\large
$${f \. \in \. \SP} \ts.
$$}
%

Note that until now, we used the term
``combinatorial interpretation'' in both its technical and colloquial
meaning, which usually coincide but can also differ in several special
case.  For the rest of the paper, we will use it only in the
technical sense, and use quotation marks for the colloquial meaning.


\smallskip

\subsection{Basic examples and non-examples} \label{ss:Main-examples}
We begin with some motivating examples, mostly following~\cite{IP22}.

\medskip

\nin
{\small $(1)$} \, Let \ts $e: P\to \nn$ \ts be the number of linear extensions of~$P$,
where \ts $P=(X,\prec)$ \ts is a poset with $n$ elements.  Clearly, $e(P)\ge 1$,
so we can define a nonnegative function \ts $e'(P): = e(P)-1$.
Now observe that \ts $e'\in \SP$ \ts simply because finding the lex-smallest
linear extension $L$ can be done in polynomial time by a greedy algorithm
(see e.g.~\cite{CW95}),  so \ts $e'(P)$ \ts counts linear extensions of~$P$
that are different from~$L$.

\medskip

\nin
{\small $(2)$} \, Let \ts $G=(V,E)$ \ts be a simple graph with $n=|V|$ vertices
and $m=|E|\ge 1$ edges.
Let \ts $c(G)$ \ts be the number of proper $3$-colorings of~$G$.
Clearly, $c\in \SP$.  Note that $3^n-c(G)$ is also in~$\SP$, since
verifying that a $3$-coloring is \emph{not} proper is in~$\poly$.

Now, taking into account permutations of colors, observe that \ts $f(G):=c(G)/6$
is an integer valued function.  To see that $f(G) \in \SP$, note that
of the six possible $3$-colorings corresponding to a given $3$-coloring one
can easily choose the lex-smallest.  In other words the combinatorial interpretation
for $f(G)$ is the set of lex-smallest $3$-colorings of~$G$.

The key point here is that starting with a $3$-coloring $\chi:V\to \{1,2,3\}$,
we can compute in polynomial time the lex-smallest $3$-coloring $\chi'$ from
the set of 6 recolorings of~$\chi$.  If $\chi=\chi'$, we verify that $\chi$
is a combinatorial interpretation, and discard~$\chi$ if otherwise.

\medskip

\nin
{\small $(3)$} \, Let $G=(V,E)$ be a simple graph with
$|V|=n$  vertices and $|E|=m$  edges.  Consider the
following elegant inequality by Grimmett~\cite{Gri76}:
$$(\ast) \qquad
\tau(G) \, \le \, \frac{1}{n} \left(\frac{2\ts m}{n-1}\right)^{n-1}
$$
for the number $\tau(G)$ of \defng{spanning trees} in~$G$.\footnote{The
original proof is a nice two line argument using the AM-GM inequality
for the product of eigenvalues of the \defng{Laplacian matrix} of~$G$.
One could argue whether this proof ``combinatorial'', but it definitely
does not extend to an explicit injection. }
We can turn this inequality into a nonnegative integer function as follows.
$$f(G) \, := \, (2\ts m)^{n-1} \. - \. n \ts (n-1)^{n-1} \ts \tau(G)\ts.
$$
Recall \ts $\tau(G)\in \FP$ \ts by the \defn{matrix-tree theorem}.
Then \ts $f\in \FP$, and so \ts $f(G)$ \ts  has a combinatorial
interpretation according to our definition.

One could argue that a ``combinatorial interpretation'' should explain \defna{why} \ts
the inequality~$(\ast)$ holds in the first place.  In fact, there are several schools
of thought on this issue (see a discussion in~\cite[$\S$4]{Pak18}).  We believe
that the computational complexity approach is both the least restrictive and
the most formal way to address this.  Indeed, the combinatorial interpretations
we study are depend solely of the functions themselves and not of the difficulty
of the proof of the functions being integer or nonnegative.

\medskip

\nin
{\small $(4)$} \, Let $h(G)$ be the number of Hamiltonian cycles in~$G$,
and let \ts $f(G) := \big(h(G)-1\big)^2$.  This is our most basic non-example.
While we cannot prove unconditionally that $f\notin \SP$, we can prove it modulo
standard complexity assumptions.  Intuitively this is relatively straightforward.
Clearly, a poly-time verifier that \ts $f(H)\ne 0$ is also a poly-time verifier
that \ts $h(G)\ne 1$.  A poly-time verifier for \ts $h(G)\ge 2$ \ts is easy: present
two distinct Hamiltonian cycles.  On the other hand, a poly-time verifier for
\ts $h(G)= 0$ \ts is unlikely since that would imply that $\NP=\coNP$.\footnote{This is
because \textsc{NonHamiltonicity} is $\coNP$-complete and \cite[Prop.~10.2]{Pap}.}

\medskip

\nin
{\small $(5)$} \, As above, let $h(G)$ be the number of Hamiltonian cycles in~$G$.
Recall \defn{Fermat's little theorem} \ts
states that \. $p\.| \. a^{p}-a$ \.  for all integers~$a$,
and prime~$p$.\footnote{Fermat stated this result in~1640 without proof, and the
first published proof was given by Euler in~1736. According to Dickson,
``this is one of the fundamental theorems of the theory of numbers''
\cite[p.~V]{Dic52}.}  Let
$$f(G) \, := \, \tfrac{1}{p} \bigl(h(G)^p \ts - \ts h(G)\bigr).
$$
It was shown in \cite[Prop.~7.3.1]{IP22}, that \ts $f(G)\in \SP$. The proof
is very short, and a variation on the original proof in~\cite{Pet72} (see also \cite{Ges84,Gol56}).
We reproduce it here in full.

\smallskip

\nin
{\em Proof.} Consider sequences \ts $(a_1,\ldots,a_p)$ \ts of integers \ts $1\le a_i \le h(G)$ \ts
and partition them into orbits under the natural cyclic action of \ts $\zz/p\zz$.
Since $p$ is prime, these orbits have either $1$ or~$p$ elements.  There
are exactly $p$ orbits with one elements, where $a_1=\ldots=a_p$.  The remaining
orbits of size~$p$ have a total of \ts $h(G)^p-h(G)$ \ts elements.  Since $p$ is fixed,
the lex-smallest orbit representative can be found in poly-time. 
\ $\sq$

\medskip

\nin
{\small $(6)$} \,
Recall the following \defn{Smith's theorem}~\cite{Tut46}.
Let $e=(v,w)$ be an edge in a cubic graph~$G$. Then the number $N_e(G)$ of
Hamiltonian cycles in $G$ containing~$e$ is always even.  Denote
\ts $f(G,e):=N_e(G)/2$.  Is \ts $f\in \SP$?  We don't know.
This seems unlikely and remains out of reach with existing technology.
But let us discuss the context behind this problem.

There are two main proofs of Smith's theorem.  Tutte's original proof
in~\cite{Tut46} uses a double counting argument.  An algorithmic version
of this proof is given by Jensen~\cite{Jen12}.
The algorithm starts with one Hamiltonian cycle in~$G$ containing~$e$,
and finds another such cycle. Jensen also shows that this algorithm
requires an exponential number of steps in the worst case.

The Price--Thomason \defng{lollipop algorithm}~\cite{Pri77,Tho78} gives
a more direct combinatorial proof of Smith's theorem.  This algorithm
also partitions the set of all Hamiltonian cycles in~$G$ containing~$e$ into
pairs,\footnote{Finding another Hamiltonian cycle was first raised in~\cite{CP88}
in the context of Smith's theorem.  This was a motivational problem for the
complexity class \ts $\PPA$, see~\cite{Pap94a}, as well as large part of our
work in~\cite{IP22}.  Whether it is $\PPA$-complete remains open.}
and is also exponential, see \cite{Cam01,Kra99}.\footnote{There
are other results similar to Smith's theorem which can be proved by
a parity argument by a variation of the lollipop algorithm,
see e.g.~\cite{CE99} and references therein.}

\begin{conj}\label{conj:Main-Smith}
The function \ts $f(G,e)$ \ts is not in $\SPr$.
\end{conj}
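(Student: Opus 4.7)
The plan is to assume $f \in \SP$ and aim for an unlikely complexity-theoretic consequence. If $M$ is a polynomial-time verifier certifying $f \in \SP$, then an accepting witness $w$ for $(G,e)$ encodes not merely that $N_e(G) \ge 2$ but that $M$ implicitly imposes a polynomial-time verifiable structure on the Smith-theorem pairing of Hamiltonian cycles through $e$. The first step would be to leverage this structure --- together with the usual self-reducibility of Hamiltonicity on cubic graphs --- into a Turing reduction from the search problem $\mathsf{AHC}$ (\emph{given a Hamiltonian cycle through $e$, find another}) to an oracle class strictly below $\PPA$, for instance $\FP^{\NP}$. The intuition is that an $\SP$ witness lets an $\NP$ machine ``commit'' to one cycle per pair, and binary search over the witness alphabet recovers the other cycle.

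The second step would combine this reduction with a hardness result for $\mathsf{AHC}$. Since $\mathsf{AHC}$ is widely conjectured to be $\PPA$-complete \cite{Pap94a}, establishing its $\PPA$-completeness independently would then give $\PPA \subseteq \FP^{\NP}$, a collapse believed to be false. As an alternative route one might try to bypass $\mathsf{AHC}$ entirely by showing that the canonical selection extracted from $M$ encodes enough information to decide some $\coNP$-hard property --- e.g.\ uniqueness of the Hamiltonian cycle through $e$ in some auxiliary gadget construction --- thereby forcing $\NP = \coNP$ in the spirit of example~{\small $(4)$}.

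The main obstacle arises at both ends of this strategy. Extracting a genuine pairing on $\Hc(G,e)$ from $M$ is plausible but not automatic: the witnesses of $M$ are arbitrary binary strings and need not encode Hamiltonian cycles in any structured way, so there is no generic route from a counting witness back to a combinatorial partner. Even granted such a reduction, the $\PPA$-completeness of $\mathsf{AHC}$ has resisted proof for three decades. The standard tricks from examples~{\small $(4)$} and~{\small $(5)$} are blocked here: the non-vanishing of $f$ is only $\NP$-complete (equivalent to edge-designated Hamiltonicity on cubic graphs), depriving us of a $\coNP$ handle, and the Fermat-quotient orbit construction collapses to triviality because $f$ is already a halved count. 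Resolving the conjecture therefore appears to require either a $\PPA$-completeness theorem for $\mathsf{AHC}$ or a genuinely new technique for ruling out $\SP$-membership when the support lies in $\NP$, consistent with the authors' own assessment that the problem is out of reach with current tools.
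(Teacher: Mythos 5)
You should first note that the statement you were asked to prove is stated in the paper as a \emph{conjecture}: the author explicitly writes ``Is \ts $f\in\SPr$? We don't know. This seems unlikely and remains out of reach with existing technology.'' There is no proof in the paper to compare against, and your proposal does not supply one either --- by your own admission it is a two-stage plan in which \emph{both} stages are unproven. Concretely: (i) the claim that an arbitrary $\SPr$-verifier $M$ for $f$ can be massaged into an efficiently computable pairing (or selection of one cycle per pair) on the Hamiltonian cycles through $e$ has no argument behind it. The witnesses accepted by $M$ are unconstrained binary strings; nothing forces them to encode Hamiltonian cycles, half-pairs, or anything combinatorially related to $G$, so the ``binary search over the witness alphabet'' step has no foothold. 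Indeed the paper only asserts the \emph{converse} implication --- a poly-time lollipop-type pairing would put $f$ in $\SPr$ by lex-smaller-partner counting --- and that direction is the easy one. (ii) The $\PPA$-completeness of finding a second Hamiltonian cycle is itself a long-open problem (the paper flags it as open in a footnote), so even a successful step (i) would leave you with a conditional statement resting on an unproved hardness hypothesis rather than a standard assumption such as \ts $\NP\ne\coNP$ \ts or non-collapse of \ts $\PH$.

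That said, your diagnosis of \emph{why} the known techniques fail is accurate and matches the paper's implicit reasoning: the support of $f$ is an $\NP$-set (a Hamiltonian cycle through $e$ exists iff $N_e(G)\ge 2$ by parity), so the example-$(4)$ route via $\coNP$-hardness of the support is unavailable, and the function is already a halved count rather than a polynomial image of an $\SPr$ function, so the binomial-basis/oblivious machinery of \cite{IP22} does not apply. Your conclusion --- that resolving the conjecture requires either a $\PPA$-completeness theorem for the second-cycle problem or a genuinely new method for excluding $\SPr$-membership of functions supported on $\NP$-sets --- is exactly the state of affairs the survey describes. But a correct assessment of the obstacles is not a proof, and you should present this as a research programme, not as a proof proposal.
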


Note that if either Jessen's algorithm or the lollipop algorithm were
poly-time, this would imply that $f\in \SP$.  Indeed, by analogy
with~{\small $(2)$}, a poly-time algorithm would allow us to search
for Hamiltonian cycles and only count the ones that are lex-smaller
than their pairing partner.

\smallskip

\subsection{First observations}  \label{ss:Main-obs}
From the limited number of examples above, here are a few observation.
We will develop them further later on.

\medskip

\nin
{\small $(i)$} \.  In combinatorics, nonnegative integer functions
don't come from nowhere.  They are either already counting something,
e.g.\ orbits under the action of some group as in {\small $(2)$}
and~{\small $(5)$}, or are byproducts of inequalities as in~{\small $(1)$},
{\small $(3)$} and {\small $(4)$}.

\medskip

\nin
{\small $(ii)$} \.
The inequality in {\small $(i)$} could be rather trivial.  For example,
we have the trivial inequality \ts $e(P)\ge 1$ in~{\small $(1)$}, the
\defng{AM-GM inequality} in~{\small $(3)$}, and \ts $x^2\ge 0$ in~{\small $(4)$}.
It is the nature of the inequality that determines whether the function is in~$\SP$.

\medskip

\nin
{\small $(iii)$}  \.  The computational hardness of the functions works
only in one direction:  if \ts $f\in \FP$ \ts then \ts $f\in \SP$, see~{\small $(3)$},
but if \ts $f\in \SP$-hard then it can go both ways.

\medskip

\nin
{\small $(iv)$}  \. Even for some classical problems like~{\small $(6)$},
membership in $\SP$ can be open.

\bigskip

\section{Sequences} \label{s:seq}

The problems in this section come from Enumerative Combinatorics.
Although they are not the most interesting questions from the complexity
point of view, the problems of finding combinatorial interpretations
of integer sequences are much too famous not be addressed.  In our
notation and problem selection we largely follow~\cite{Pak18}.

Throughout this section we assume that the input $n$ is in unary.
We say that an integer sequence $\{a_n\}$ has a combinatorial interpretation
if a function $f: n\to a_n$ is in $\SP$.  Similarly,
we say that $\{a_n\}$ can be computed in poly-time if $f\in \FP$.  By abuse of
notation, we also say that $\{a_n\}$ is in $\SP$ and $\FP$, respectively.

\smallskip

\subsection{Catalan numbers}\label{ss:seq-Cat}
Recall the \defn{Catalan numbers}
$$\text{Cat}(n) \ = \ \frac{1}{n+1}\binom{2n}{n} \ = \ \binom{2n}{n} \. -\.\binom{2n}{n-1}\.,
$$ \ts
see e.g.\ \cite[\href{https://oeis.org/A000108}{A000108}]{OEIS}.
The fractional formula implies that \ts Cat$(n)>0$,
the subtraction formula implies that \ts Cat$(n)\in \zz$, but a priori
it is not immediately obvious that Catalan numbers have \emph{any} \ts combinatorial
interpretations. Of course, there are over 200 ``combinatorial
interpretations'' of various types given in \cite{Sta15}.

Let us show that \ts $\big\{\text{Cat}(n)\big\}\in \SP$.
Recall that \ts $\text{Cat}(n)$ \ts is equal to the number of \defn{ballot sequences},
defined as  \ts $0-1$ \ts sequences with $n$ zeros and $n$~ones,
s.t.\ every prefix has at least as many zeroes as ones.
This can be checked in time poly$(n)$, which proves
that Catalan numbers are in~$\SP$.\footnote{This is because $n$ is in unary.
Note that if $n$ is binary, the ballot sequences have exponential length.}

In fact, just about all ``combinatorial interpretations'' in \cite{Sta15} can
also be used to show that Catalan numbers are in~$\SP$, but some are trickier than others.
For example, $\text{Cat}(n)$ \ts is the number of $123$-avoiding permutations
in~$S_n$, and one would need to observe that there are \ts $\binom{n}{3}=O(n^3)$
possible $3$-subsequences. Thus, verifier checking the $123$-avoidance is in~$\poly$,
as desired.

On the other hand, \emph{some} ``combinatorial interpretations'' in \cite{Sta15}
are not even counting combinatorial objects, and it can take some effort
to give them an equivalent presentation which is in~$\SP$.
Notably, Exc~195 counts certain regions in $\rr^n$ in the complement
to the \defng{Catalan hyperplane arrangement} \ts $\cC_n$\ts.  This
setting raises some interesting computational questions.

To present the regions as a combinatorial objects one can
use a collection of signs, one for each hyperplane. Since the
arrangement \ts $\cC_n$ \ts has $\Theta(n^2)$ hyperplanes, a trivial consistency check
of all $(n+1)$-tuples of hyperplanes would give an exponential
time algorithm for testing whether the resulting region is nonempty.
This is not good enough for being in~$\SP$.

Now, in this specific case of the Catalan arrangement,
there is an easy poly-time testing algorithm which uses the simple
structures of hyperplanes in \ts $\cC_n$ \ts and avoids the redundancy
in the exhaustive testing above.\footnote{In fact, the problem of counting the
number of regions in the complement of general rational hyperplane
arrangements is in~$\SP$.  Indeed, one can use standard results
in Linear Programming to give a poly-time verifier for all regions
encoded by \emph{subsets} of the set of halfspaces defined by the
hyperplanes.  This implies that counting regions problem is always in~$\SP$.
We thank Tim Chow for this observation, see \ts
\href{https://mathoverflow.net/a/428272/4040}{mathoverflow.net/a/428272}}
This algorithm is the verifier giving the desired combinatorial
interpretation.\footnote{The bijection
in the solution of Exc~195 in~\cite{Sta15}
(which requires a proof!) is another approach to have these regions
are in bijection with combinatorial objects. }

\smallskip

\subsection{Polynomial time computable combinatorial sequences}\label{ss:seq-poly}
Note that since~$n$ is in unary, the sequence \ts $\big\{\text{Cat}(n)\big\}$ is in $\FP$
since it can be computed in polynomial time.  The same holds for \defng{Fibonacci
numbers} \cite[\href{https://oeis.org/A000045}{A000045}]{OEIS},
\defng{numbers of involutions} \cite[\href{https://oeis.org/A000085}{A000085}]{OEIS},
\defng{partition numbers} \cite[\href{https://oeis.org/A000110}{A000110}]{OEIS},
and myriad other sequences which can be computed via recurrence relation.

Formally, we observe in~\cite[Prop.~2.2]{Pak18} that every \defng{D-algebraic
sequence} \ts is in~$\FP$ when the input $n$ is in unary.  Thus, in particular,
this holds for all \defng{algebraic}  \ts and \defng{P-recursive sequences},
see e.g.\ \cite[Ch.~6]{Sta99}.

On the other hand, there are sequences which likely cannot be computed
in poly$(n)$ time.  For example, the number \ts SAW$(n)$ \ts of
\defng{self-avoiding walks} \ts of length~$n$ in $\zz^2$ starting at the origin,
is conjectured not to be in~$\FP$ \cite[Conj.~2.14]{Pak18}. Clearly, \ts
$\bigl\{\text{SAW}(n)\bigr\}\in \SP$,
so we turn our attention to sequences which are \emph{unlikely} to be in~$\SP$,
 or are in~$\SP$ for less obvious reasons.

\smallskip

\subsection{Unimodality and log-concavity}\label{ss:seq-unimod}
Both \defng{unimodality} and \defng{log-concavity} properties of
combinatorial sequences are heavily studied in the literature,
see e.g.\ \cite{Bra15} (see also \cite{Bre89,Bre94,Sta89} for
more dated surveys). Following~\cite{Pak19}, every time you
have an inequality $X\le Y$, we can convert it into a
nonnegative integer $(Y-X)$ and ask if it has a
combinatorial interpretation. For combinatorial sequences
this is especially notable, and the approach above works
well again.

To see explicit examples, recall multi-parameter combinatorial sequences such as
\defng{binomial coefficients} \ts $\binom{n}{k}$\.,
\defng{Delannoy numbers} \ts $D(i,j)$ \ts
\cite[\href{https://oeis.org/A008288}{A008288}]{OEIS},
\defng{Stirling numbers} \ts of both kinds
\cite[\href{https://oeis.org/A008275}{A008275}]{OEIS} and
\cite[\href{https://oeis.org/A008277}{A008277}]{OEIS},
\defng{$q$-binomial coefficients} \ts $\binom{n}{k}_q$
(see e.g.\ \cite[$\S$1.7]{Sta99}), etc.  All of these
satisfy various unimodality and log-concavity properties, e.g.\
$$\aligned
& \tbinom{n}{k-1} \tbinom{n}{k+1} \, \le \, \tbinom{n}{k}^2\., \\
& D(i,j)\, \le \, D(i+1,j-1) \quad \text{for all} \quad i<j, \quad \text{and}\\
&
[q^{m-1}] \ts \tbinom{n}{k}_q \, \le \,  [q^m] \ts \tbinom{n}{k}_q
 \quad \text{for all} \quad 0< m \le \tfrac{k(n-k)}{2}\..
\endaligned
$$
We refer to \cite{Sag92,CPP21b} for the first two of these inequalities both of which
have a direct injective proof.  The last inequality is due to Sylvester~\cite{Syl},
see also \cite{PP13,Pro82,Sta89} for modern treatment.
Clearly, each of these inequalities has a combinatorial interpretation simply
because both sides are in~$\FP$.  For example, \ts
$\bigl\{\tbinom{n}{k}^2-\tbinom{n}{k-1} \tbinom{n}{k+1}\bigr\} \in \FP$, etc.

\smallskip

\subsection{Partitions}\label{ss:seq-part}
\defn{Ramanujan's congruence} \. $p(5n-1) \equiv 0  \pmod 5$ \ts has a famous
``combinatorial interpretation'' by Dyson, who conjectured (among other things)
that \ts $\tfrac15 \. p(5n-1)$ \ts is equal to the number of partitions
$\la \vdash (5n-1)$ with \defn{rank} $\la_1-\la_1' = 0 \pmod 5$, see \cite{Dys44}.
This conjecture was proved in~\cite{AS54} and later extended in a series of
remarkable papers, see \cite{AG88,GKS90,Mah05}.

Now, Ramanujan proved many more congruences such as
$p(25n-1)\equiv 0  \pmod {25}$, see e.g. \cite[$\S$6.6]{Har40},
but there seem to be no Dyson-style rank statistics in this case.
On the other hand, now that the congruence is known, it follows
that \ts $\bigl\{\tfrac1{25} \. p(25n-1)\bigr\} \in \FP$.  This
is because
$$
\sum_{n=0}^{\infty} \. p(n) \ts t^n \, = \, \prod_{i=1}^{\infty} \.
\frac{1}{1-t^i}
$$
is D-algebraic, or because $\{p(n)\}$ can be computed in poly$(n)$ time
via \defng{Euler's recurrence} (among several other ways),
see \cite[$\S$2.5]{Pak18} and references therein.  This implies
that \ts $\bigl\{\tfrac1{25} \. p(25n-1)\bigr\}$ \ts \emph{already has} \ts
a combinatorial interpretation.

Similarly, the curious inequality \.
$p_{1 \ts 4}(n) \ge p_{2 \ts 3}(n)$ \. for the numbers of partitions of~$n$
into parts $\pm 1 \pmod 5$ and $\pm 2 \pmod 5$, respectively.  Finding an explicit
injection proving the inequality was suggested by Ehrenpreis, see \cite{AB89,Kad99}.
From the computational complexity point of view, we already have \.
$\{p_{1 \ts 4}(n) - p_{2 \ts 3}(n)\} \in \FP$,
which shows that the desired injection can be computed in poly-time.\footnote{This is
similar and partially motivated by the discussion of complexity of partition
bijections viewed as algorithms, see~\cite[$\S$6.1]{KP}
and \cite[$\S$8.4.5]{Pak06}.}

Finally, the log-concavity of the partition function \cite{DP15},
implies that the sequence \ts $\big\{p(n)^2-p(n-1)\ts p(n+1)\,,\,n>25\big\}$
\ts is in $\SP$, simply because \ts $\{p(n)\}\in \FP$.

\smallskip

\subsection{Unlabeled graphs}\label{ss:seq-isom}
Let $u_n$ be the number of non-isomorphic \defn{unlabeled graphs} on~$n$
vertices, see  \cite[\href{https://oeis.org/A000088}{A000088}]{OEIS}.
Wilf conjectured in \cite{Wilf}, that $\{u_n\}$ \emph{cannot} be computed
in poly$(n)$ time, see also~\cite[Conj.~1.1]{Pak18}.  Does $\{u_n\}$ have a
combinatorial interpretation?  This is not so clear.  The difficulty is
that we are counting orbits rather than combinatorial objects and there
is no obvious way to choose orbit representatives:

\begin{op} \label{op:seq-graphs}
\ts The sequence \ts $\{u_n\}$ \ts is in \ts $\SPr$.
\end{op}

To understand the context of this problem, consider a closely related sequence.
Let $a_n$ be the number of nonisomorphic \emph{unlabeled plane triangulations}
on~$n$ vertices, see \cite[\href{https://oeis.org/A000109}{A000109}]{OEIS}.
In \cite[Conj.~1.3]{Pak18}, we conjectured that $\{a_n\}$ be computed
in poly$(n)$ time.  This would immediately imply that $\{a_n\}$  is in~$\FP$
and thus in~$\SP$.  Since the conjecture remains open, we show the latter directly:

\begin{prop}\label{p:seq-tri}
The sequence \ts $\{a_n\}$ \ts is in \ts $\SPr$.
\end{prop}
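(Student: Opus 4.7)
The strategy is to show that $a_n$ equals the number of \emph{canonical} labeled plane triangulations on $[n]$, where canonicity can be verified in polynomial time. This exploits the critical fact (which fails for general graphs, cf.~Open Problem~\ref{op:seq-graphs}) that planar graph isomorphism is in $\poly$, a classical result of Hopcroft--Tarjan and Hopcroft--Wong. So unlike the general unlabeled graph case, orbits of labeled plane triangulations under $S_n$ admit polynomial-time orbit representatives.

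First, I would fix a concrete encoding of \emph{labeled} plane triangulations on vertex set $[n]$, say by the pair (adjacency matrix, rotation system with a marked outer face). The set $T_n$ of such encodings can be verified in polynomial time: check that the graph is simple, has $3n-6$ edges, is planar with the given rotation system realizing every face as a triangle, and is $3$-connected. By Whitney's theorem, a $3$-connected planar graph has an essentially unique sphere embedding, so $S_n$ acts on $T_n$ by relabeling the vertices, and $a_n = |T_n / S_n|$.

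Second, I would invoke a polynomial-time canonicalization for planar graphs: there is a poly-time computable map $\pi$ on $T_n$ that sends each $T \in T_n$ to a distinguished element of its $S_n$-orbit. Any standard construction works — for instance, using the Hopcroft--Tarjan isomorphism algorithm one can compute, in $\mathrm{poly}(n)$ time, the lex-smallest adjacency matrix achievable by relabeling vertices, or simply output the representative obtained by canonical DFS-based traversal from each root edge. The verifier for the $\SP$ certificate takes as input $x \in T_n$, rejects if $x \notin T_n$, computes $\pi(x)$, and accepts iff $x = \pi(x)$. This is clearly polynomial time, and
\[
a_n \, = \, \bigl|\{\ts T \in T_n \. : \. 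T = \pi(T)\ts\}\bigr|,
\]
so $\{a_n\} \in \SP$.

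The main subtle point — and the reason this proof does not extend to Problem~\ref{op:seq-graphs} — is invoking poly-time canonicalization of planar graphs. One must be careful that what is available in the literature is genuinely a \emph{canonical form} (a single representative per isomorphism class), not merely an isomorphism tester; for planar graphs this follows from the Hopcroft--Tarjan framework, but it is worth citing this explicitly. Everything else (enumerating triangulations, checking $3$-connectivity, verifying the rotation system defines a triangulation of the sphere) is routine polynomial-time graph algorithmics, so no real combinatorial difficulty remains.
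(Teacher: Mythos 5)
Your proposal is correct and rests on the same key idea as the paper's proof: plane triangulations are $3$-connected planar graphs with an essentially unique embedding, so their automorphism groups are polynomially small and explicitly computable via poly-time planar graph isomorphism, and one can therefore break symmetry by accepting only a canonical (lex-smallest) orbit representative. The implementations differ: you canonicalize labeled triangulations directly, whereas the paper routes through \emph{rooted} plane triangulations and the Poulalhon--Schaeffer bijection to balanced plane trees, using those trees as the witness objects and comparing the $O(n^2)$ trees arising from the orbit of rooted versions. Your route is more self-contained; the paper's buys a connection to Tutte's product formula and to Conjecture~\ref{conj:seq-tri} that $\{a_n\}\in\FP$. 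One caution on your second step: an isomorphism \emph{tester} does not by itself yield the lex-smallest adjacency matrix over all relabelings --- computing the lexicographically least adjacency matrix is NP-hard in general, and this is exactly the canonical-labeling subtlety you flag yourself. So you should discard that first option and rely entirely on your second one: since the embedding of a $3$-connected planar graph is unique up to reflection, there are only $O(n)$ rooted flags, each determines a canonical traversal string in linear time, and the minimum over these flags is a genuine poly-time canonical form. With that fix the argument is complete, and it is this flag-rooted enumeration (not a generic canonical form for planar graphs) that the paper's proof also uses.
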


We postpone the proof until~$\S$\ref{ss:proofs-Prop-tri}. The idea that the
group of automorphisms of triangulations has polynomial size and all
automorphism can be computed explicitly via a poly-time algorithm for
the isomorphism of planar graphs.  We are able to compute the whole
orbit and then use symmetry breaking by taking lex-smallest orbit representative.

\begin{conj}[{\rm \cite[Conj.~1.3]{Pak18}}{}]\label{conj:seq-tri}
The sequence \ts $\{a_n\}$ \ts is in \ts $\FPr$.
\end{conj}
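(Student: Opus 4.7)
\textit{Proof plan for Conjecture~\ref{conj:seq-tri}.} The plan is to apply a Burnside-type orbit-counting argument to the action of sphere symmetries on rooted plane triangulations, reducing the poly-time computation of $a_n$ to Tutte's closed-form enumeration of rooted triangulations together with analogous orbifold counts for each possible symmetry type.

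First, by Steinitz's theorem and Whitney's uniqueness theorem, every plane triangulation $T$ on $n$ vertices is $3$-connected and admits an essentially unique embedding in $S^2$; hence $\mathrm{Aut}(T)$ embeds in the homeomorphism group of $S^2$ as a finite subgroup of $\mathrm{O}(3)$, necessarily of one of the classical types (cyclic $C_k$, dihedral $D_k$, or polyhedral $T$, $O$, $I$, with or without reflections). Let $R(n) = 12n-24$ be the number of rootings of~$T$ (directed edge together with a distinguished side), and let $r_n$ denote Tutte's count of rooted plane triangulations, given by a closed hypergeometric formula and so in $\FPr$. Orbit counting yields
$$
r_n \. = \. \sum_T \frac{R(n)}{|\mathrm{Aut}(T)|}
$$
summed over unrooted $T$, and Burnside's lemma rewrites
$$
a_n \. = \. \frac{1}{R(n)} \sum_{[g]} \bigl|\bigl\{\text{rootings fixed by a sphere symmetry of type } [g]\bigr\}\bigr|,
$$
where $[g]$ ranges over the conjugacy classes of finite-order sphere symmetries realisable by a triangulation on~$n$ vertices.

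Second, for each symmetry class $[g]$ of order $k$, the fixed rooted triangulations correspond bijectively to triangulations of the orbifold quotient $S^2/\langle g\rangle$, a sphere with $O(1)$ cone points whose local structure depends only on the symmetry type. Such constrained triangulations are enumerated by Tutte-style algebraic generating-function equations of exactly the flavour solved by the quadratic method, and the polyhedral types $T,O,I$ contribute only $O(1)$ terms. The cyclic (and dihedral) types of order $k$ are non-trivial only when $k$ divides a bounded linear function of~$n$, so at most $O(d(n))$ values of~$k$ arise; for each, coefficient extraction from the corresponding algebraic series is in $\FPr$ by standard P-recursive machinery (cf.~$\S$\ref{ss:seq-poly}). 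Summing the resulting $O(n)$ many $\FPr$ quantities establishes $\{a_n\}\in\FPr$.

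\textit{Main obstacle.} The principal difficulty is uniform control over the cyclic symmetry contributions: the orbifold generating functions are known in principle for each fixed~$k$, but one must verify that their coefficients can be extracted in polynomial time \emph{uniformly in $k$} as $n$ varies, rather than with a hidden constant depending on~$k$. The orientation-reversing reflection types, where fixed edges and fixed faces introduce half-edge combinatorics, require further bookkeeping. Barring an unexpected algebraic obstruction, the program of Brown--Tutte, Wormald, and Liskovets on unrooted planar-map enumeration should supply all the required ingredients; the work is to assemble them into a single algorithm and check the uniformity in~$k$ at every step.
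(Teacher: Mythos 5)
The statement you are proving is stated in the paper as an \emph{open conjecture}: the text explicitly says the conjecture ``remains open'' and only remarks that it should follow from the tools of \cite{Fusy05,KS18}; the paper instead proves the weaker Proposition~\ref{p:seq-tri} ($\{a_n\}\in\SPr$) by a symmetry-breaking argument. So there is no proof in the paper to compare against, and your submission is, by your own framing, a plan rather than a proof. The route you sketch --- quotient/orbifold enumeration in the style of Liskovets, Wormald and Fusy --- is indeed the expected one, but two concrete gaps remain.

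First, your ``Burnside'' display is not correct as written. The automorphism group of a plane triangulation acts \emph{freely} on rootings (flags): the stabilizer of a flag $(v,e,F)$ is trivial (this is exactly the fact used in the paper's proof of Proposition~\ref{p:seq-tri}), so the set of rootings fixed by any nontrivial symmetry is empty and your formula would collapse to $a_n = r_n/R(n)$, which is false. The correct reduction is Liskovets' identity $a_n = \frac{1}{R(n)}\bigl(r_n + \sum_{g\neq \mathrm{id}} N_g\bigr)$, where $N_g$ counts \emph{rooted maps admitting an automorphism conjugate to $g$} (with multiplicity over automorphisms), and each $N_g$ is then re-expressed as a count of rooted quotient maps on the orbifold $S^2/\langle g\rangle$ with prescribed branch data. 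This is fixable, but it is not the lemma you stated. Second, and more seriously, the step you label the ``main obstacle'' is precisely the content of the conjecture: you must exhibit, for every admissible symmetry type and every order $k$ dividing the relevant linear function of $n$, a poly-time (uniformly in $k$) evaluation of the constrained orbifold triangulation counts, including the orientation-reversing types with fixed edges and faces. Asserting that ``standard P-recursive machinery'' handles this begs the question, since the relevant generating functions form an infinite family indexed by $k$ and the cited machinery gives polynomial time only for each fixed member of the family. Until that uniformity is established (presumably by assembling the explicit quotient-map formulas of \cite{Fusy05,KS18}), the conjecture remains open.
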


We believe that this conjecture can be derived using the
tools in~\cite{Fusy05,KS18}.
Back to the sequence $\{u_n\}$.  If \textsc{GraphIsomorphism} was known
to be in $\poly$,\footnote{Formally, we need an effective version
\textsc{GraphIsomorphism}, which produces generators for $\Aut(G)$
as a subgroup of~$S_n$.  This is known in many cases and related to
the notion of \defn{canonical labeling}, see \cite{Bab19,BL83,SW19}.}
one could try to use the
symmetry breaking approach in the proof of Proposition~\ref{p:seq-tri}.
Babai's recent \defng{quasipolynomial}
upper bound \ts $n^{O((\log n)^c)}$ \ts on graph isomorphism \cite{Bab18},
falls short of what we need towards resolving Open Problem~\ref{op:seq-graphs}.

Note that plane triangulations are dual to $3$-connected cubic graphs,
so the following problem lies in between Proposition~\ref{p:seq-tri}
and Open Problem~\ref{op:seq-graphs}.

\begin{conj}\label{conj:seq-reg}
Let \ts $R_k(n)$ \ts be the number of \ts $k$-regular unlabeled graphs
on \ts $n$ \ts vertices. Then \ts $\big\{R_k(n)\big\}$ \ts is in \ts
$\SPr$, for all \ts $k\ge 1$.
\end{conj}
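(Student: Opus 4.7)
The plan is to adapt the symmetry-breaking strategy of Proposition~\ref{p:seq-tri} from triangulations to $k$-regular graphs, replacing the poly-time isomorphism algorithm for planar graphs with the corresponding bounded-degree algorithm. For each fixed $k$, take as a witness a binary encoding of a labeled simple graph $G$ on $n$ vertices (e.g.\ its adjacency matrix, of length $\binom{n}{2}$), and declare the verifier to accept iff \,(i)~$G$ is $k$-regular, and \,(ii)~the given labeling of $G$ is the lex-smallest labeling among all labelings of graphs isomorphic to $G$. The accepting witnesses are then in bijection with the isomorphism classes of $k$-regular graphs on $n$ vertices, one canonical representative per class, so their number is exactly $R_k(n)$.

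Verifying (i) is $O(n^2)$ from the adjacency matrix. Verifying (ii) reduces to computing a canonical labeling of $G$ and comparing it with the given one. Since $k$ is a fixed constant, every such $G$ has maximum degree bounded by $k$, and Luks's polynomial-time algorithm for isomorphism of bounded-degree graphs, together with the Babai--Luks canonical form construction, produces a canonical labeling of $G$ in $\mathrm{poly}(n)$ time. Combining these ingredients, the verifier runs in polynomial time and places \ts $\big\{R_k(n)\big\} \in \SPr$ \ts for each fixed~$k$. Note that the case $k\le 2$ is degenerate (empty graphs, perfect matchings, disjoint unions of cycles) and is already in $\FPr$, so the genuine content begins at cubic graphs.

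The main obstacle lies precisely in the hypothesis that $k$ is a fixed constant. If $k$ is allowed to grow with $n$, the bounded-degree machinery fails: complementation disposes of the case $k = n - O(1)$ by reducing to $(n-1-k)$-regular graphs, but the regime $k \asymp n/2$ lives inside general graph isomorphism, for which only Babai's quasipolynomial canonical form~\cite{Bab18} is currently available. A quasipolynomial canonical labeling is not enough for $\SPr$ membership, which strictly requires polynomial-time verification. Bridging this gap would demand either a genuine polynomial-time canonical labeling for regular graphs of unbounded degree, or a fundamentally different scheme for selecting orbit representatives that exploits structure peculiar to the regular case (e.g.\ a poly-time \emph{combinatorial} invariant sharp enough to cut each orbit to a single word). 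This is where the uniform-in-$k$ version of the conjecture appears genuinely hard, and where any attack would need to provide the new idea.
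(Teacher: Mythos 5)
The paper gives no proof here: the statement is labeled as a conjecture, and the surrounding text offers only the pointer that \textsc{GraphIsomorphism} --- and, in the nearby footnote, canonical labeling --- is polynomial time for bounded degree graphs, citing \cite{Luks82,BL83}. Your proposal effectively upgrades that hint to a proof of the conjectured regime: for each fixed $k$, a witness is a labeled $k$-regular graph that coincides with its own canonical form, the verifier checks $k$-regularity in $O(n^2)$ and canonicality by computing the Babai--Luks canonical form in $\mathrm{poly}(n)$ time (degree depending on $k$), and the accepting witnesses are in bijection with isomorphism classes. That is a valid $\SPr$ verifier and I see no gap.

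Two remarks. First, your mechanism is cleaner than a literal transplant of the proof of Proposition~\ref{p:seq-tri}, which enumerates the full relabeling orbit and picks the lex-smallest. That enumeration is feasible for plane triangulations only because $|\Aut(G)|=O(n)$ there, and it fails for $k$-regular graphs --- a disjoint union of $n/(k+1)$ copies of $K_{k+1}$ has automorphism group of size $\bigl(n/(k+1)\bigr)!\cdot\bigl((k+1)!\bigr)^{n/(k+1)}$, super-exponential in $n$ even for fixed $k$. Calling a black-box canonical labeling sidesteps this entirely; one should only say ``agrees with the Babai--Luks canonical representative'' rather than ``is lex-smallest,'' since the two need not coincide (you do note this when reducing step~(ii) to a canonical labeling computation, which is what matters: any poly-time canonical form, not specifically the lex-smallest adjacency matrix, yields a correct verifier). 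Second, your closing paragraph is the right caveat but not an objection to the conjecture as stated: the quantifier ``for all $k\ge 1$'' sits outside the $\SPr$-membership claim, so the $n^{O(k)}$-type running time of Luks's algorithm is harmless, and only the uniform-in-$k$ variant, with $(k,n)$ jointly as input, runs into Babai's quasipolynomial barrier.
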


We are  optimistic about this conjecture since for $k$-regular
graphs the \textsc{GraphIsomorphism} problem is in~$\poly$.  This was
proved by Luks in~\cite{Luks82}, see also~\cite{BL83,SW19}.

\smallskip

Finally, there is a curious connection to log-concavity
(see~$\S$\ref{ss:seq-unimod}).  Denote by \ts $u_n(m)$ \ts the number
of nonisomorphic graphs with \ts $n$ \ts vertices and \ts $m$ \ts edges.
It follows from~\cite{PR86} (see also~\cite{Vat18}), that \ts
$u_n(m)^2\ge u_n(m-1)\ts u_n(m+1)$.  If \. $\{u_n(m)\}\in \SP$
(see Open Problem~\ref{op:seq-graphs}), it would make sese to ask
if we also have \ts $\big\{u_n(m-1)\ts u_n(m+1)-u_n(m)^2\big\}\in \SP$.
Analogous questions can be asked about non-isomorphic planar graphs,
plane triangulations, etc.

\smallskip

\subsection{Knots}\label{ss:seq-knots}
Denote by \ts $k_n$ \ts the number of
distinct knots with bridge number at most~$n$, see e.g.\
\cite[\href{https://oeis.org/A086825}{A086825}]{OEIS}.
Here the \defn{bridge number} is a knot invariant defined
as the minimal number of bridges required to draw a knot
in the plane, see e.g.~\cite[$\S$4.3]{Mur96}.

\begin{op} \label{op:seq-knots}
The sequence \ts $\{k_n\}$ \ts is not in \ts $\SPr$.
\end{op}

To underscore combinatorial nature of the problem, note that
knot diagrams are a (subset of) planar $4$-regular graphs with
signs at the vertices, so $n$ is the bound on the number of vertices.
The difficulty starts with the word ``distinct'' which is formalized
as \defng{non-isotopic} and is also combinatorial in nature:
two knots are \defn{isotopic} if they are connected by a finite
sequence of \defng{Reidemeister moves}.  Unfortunately,
from computational point of view, the issue with identifying
distinct knots is much deeper than with nonisomorphic graphs.

First, note that it is not at all obvious that the isotopy is
decidable.  Could it be that the number of necessary
Reidemeister moves between two isotopic
knots with~$n$ crossings grows faster than the busy beaver
function?  The answer turns out to be ``No''; the sequence
$\{k_n\}$ is computable indeed.  The best known upper bound on the number
of Reidemeister moves is the tower of twos of height $2^{O(n)}$
is given by Coward and Lackenby~\cite{CL14}.\footnote{There are also
various hardness results suggesting that such sequence is hard to compute,
see e.g.~\cite{dMRST,Lac17,KT21}. }

We conclude with a simpler problem, or at least the one that has been resolved.
Denote by \ts $a_n$ \ts the number of knot diagrams on $n$ labeled crossings
which are isotopic to the \defng{unknot}.  The fact that $\{a_n\}$ is
in~$\SP$ follows from a famous result by Hass, Lagarias and Pippenger \cite{HLP99}.\footnote{It is
also an immediate corollary from~\cite{Lac15}, which shows that unknot
can be obtained by a sequence of $O(n^{11})$ Reidemeister moves.}
Similarly, denote by \ts $b_n$ \ts the number of knot diagrams on $n$
labeled crossings which are \emph{not} isotopic to the \defng{unknot}.
The sequence $\{b_n\}$ is also in $\SP$ by a recent result of Agol,
see~\cite[$\S$3.5]{Lac17}.

\bigskip

\section{Subgraphs} \label{s:sub}

Discrete Probability is a major source of combinatorial inequalities,
most of which can be converted into nonnegative functions.  Whether
these functions are in $\SP$ is then a challenging problem.
In this section we concentrate on various counting subgraphs
problems.

\smallskip

\subsection{Matchings}\label{ss:sub-match}
Let  $G=(V,E)$ be a simple graph, and let $p(G,k)$ denote the number of
\defng{$k$-matchings} in a simple graph $G=(V,E)$ defined as the number
of $k$-subsets of~$E$ of pairwise nonadjacent edges.  Clearly,
$p(G,k)\in \SP$.  Following~\cite{Pak19}, consider a function \ts
$$
f(G,k) \, := \, p(G,k)^2 \. - \.  p(G,k-1) \. p(G,k+1).
$$
Famously, Heilmann and Lieb proved that $f(G,k)\ge 0$ \cite{HL72},
see also \cite[$\S$6.3]{God93} and~\cite{MSS15} for more context
on this remarkable result.  It was observed in~\cite{Pak19} that
$f\in \SP$ follows immediately from Krattenthaler's injective
proof of the Heilmann--Lieb theorem~\cite{Kra96}.

Define \ts $q(G)$ denote the number of spanning subgraphs \ts $H=(V,E')$,
\ts $E'\subseteq E$, which contain a perfect matching.
Observe that the function \ts $q\in\SP$, since testing whether~$H$
has perfect matching is in~$\poly$, see e.g.\ \cite[$\S$9.1]{LP86}.
The following subsection shows that this is unlikely for other
graph properties.

\smallskip

\subsection{Hamiltonian subgraphs}\label{ss:sub-Ham}
Let $f(G)$ denote the number of Hamiltonian spanning subgraphs
of a simple graph $G=(V,E)$.  Whether $f\in^?\SP$ is a difficult
question and does not follow directly from the definition since
we need a poly-time algorithm to decide Hamiltonicity of~$G$.\footnote{This
is another example where a combinatorialist might disagree, since
the definition $f(G)$ already gives a \ts \emph{kind of} \ts ``combinatorial interpretation''.
}

Note that this is a close call, since there is an algorithm to
\emph{verify} that each $H$ is Hamiltonian by showing a Hamiltonian
cycle~$C$ in~$H$. Thus, one would think that pairs $(H,C)$ give a
combinatorial interpretation of $f$, but of course one would
need to pick only one such cycle $C$ per~$H$.  For example,
the lex-smallest $C$ would work, but there is no poly-time
algorithm to verify that.

\begin{op}\label{op:sub-Ham}
Function \ts $f$ \ts is not in \ts $\SPr$.\footnote{Here and all other open problems
and conjectures in this paper we implicitly allow the use of any of the
standard complexity assumptions.  Otherwise, these open problems are
both deeper and less approachable.
}
\end{op}

Even more difficult is the \ts $\ov{f}(G):=2^m-f(G)$ \ts function
which counts \emph{non-Hamiltonian} spanning subgraphs of~$G$, since there
is no efficient verifier in this case.  That makes the following
problem a little more approachable, perhaps:

\begin{conj}\label{conj:sub-Ham-non}
Function \ts $\ov{f}$ \ts is not in \ts $\SPr$.
\end{conj}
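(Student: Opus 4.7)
The plan is to assume $\overline{f} \in \SP$ and derive a collapse of complexity classes inconsistent with standard hypotheses. The first ingredient is the identity
\[
\overline{f}(G) \. = \. 2^m \quad \Longleftrightarrow \quad G \text{ is non-Hamiltonian},
\]
which follows from $f(G) + \overline{f}(G) = 2^m$ together with the fact that any Hamiltonian cycle in a spanning subgraph $H \subseteq G$ is also a Hamiltonian cycle in $G$, so $f(G) \geq 1$ iff $G$ itself is Hamiltonian. Thus $\overline{f}$ attains its maximum value $2^m$ precisely on the $\coNP$-complete set of non-Hamiltonian inputs, which already signals that $\overline{f}$ is measuring a strictly coNP-flavored quantity.

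The core step is to prove that $\overline{f}$ is hard in a sense strictly beyond $\SP$. I would attempt to show that $\overline{f}$ is $\SP^{\NP}$-hard under parsimonious reductions. The template is to take a $\SP^{\NP}$-complete counting problem, say $\#\{x : \forall y,\. \phi(x,y)\}$ for a 3CNF $\phi$, and construct a graph $G_\phi$ whose non-Hamiltonian spanning subgraphs are in polynomial-time-computable bijection with the counted objects. Standard Karp-style SAT-to-Hamiltonicity reductions encode $\exists$ via the existence of a Hamiltonian cycle in a single graph; to encode the $\forall$-quantifier at the \emph{subgraph} level, one would adjoin forcing edges and universal gadgets so that the inclusion pattern of edges in a subgraph parametrizes the outer $x$, while non-Hamiltonicity of that subgraph encodes $\forall y$. ``Spurious'' non-Hamiltonian subgraphs (disconnected ones, those missing gadget edges, etc.)\ would be eliminated by pre-attaching a rigid skeleton whose disruption produces a uniform, easily subtracted contribution.

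If such a parsimonious reduction succeeds, then $\overline{f} \in \SP$ forces $\SP^{\NP} \subseteq \SP$. By Toda's theorem and its refinements, this collapses the polynomial hierarchy; in particular one extracts $\NP = \coNP$, contradicting the standard hypothesis. A complementary approach worth trying in parallel is to apply directly the nonexistence toolkit from \cite{IP22} discussed in Section~\ref{s:no}: if those techniques give a black-box way to go from $g \in \SP$ and ``$g(x) = \max$'' being $\coNP$-hard to a collapse, then the identity above finishes the argument without constructing any explicit reduction.

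The main obstacle is the parsimonious reduction in the core step. Karp's gadgets control Hamiltonicity of the full graph, not of arbitrary spanning subgraphs, so the combinatorial design must ensure that every spanning subgraph faithfully reflects the intended formula and that incidental non-Hamiltonian subgraphs do not pollute the count. This bookkeeping is where the heavy lifting lies and will likely require new gadgets beyond the classical Hamiltonicity toolkit. The reason the author labels this ``a little more approachable'' than Open~Problem~\ref{op:sub-Ham} is exactly that the hardness here stems from the intrinsic $\coNP$-barrier to verifying non-Hamiltonicity of a single subgraph, whereas the difficulty for $f$ is only the more elusive canonical-cycle-selection obstruction.
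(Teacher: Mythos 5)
First, a point of calibration: the paper does not prove this statement. It is posed as a conjecture, accompanied only by the informal remark that $\ov{f}$ counts objects (non-Hamiltonian spanning subgraphs) for which no efficient verifier is expected. So there is no proof of record to compare yours against, and any complete argument here would be a new result, not a reconstruction.

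That said, your proposal is a plan rather than a proof, and its core step has a genuine flaw. The logical skeleton is sound: if $\ov{f}$ were $\SP^{\NP}$-hard under \emph{parsimonious} reductions, then $\ov{f}\in\SP$ would give $\SP^{\NP}\subseteq\SP$ and hence $\NP=\coNP$ (a $0/1$-valued $\Pi_1$ indicator lies in $\SP^{\NP}$, and the support of any $\SP$ function is in $\NP$). The problem is the reduction itself. Almost all of the $2^m$ spanning subgraphs of any gadget graph are non-Hamiltonian for degenerate reasons (isolated vertices, vertices of degree one, disconnection), and these dominate the count; your fix is to treat them as ``a uniform, easily subtracted contribution,'' but subtraction is exactly what parsimony forbids. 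If the best the gadgets achieve is $\ov{f}(G_\phi)=N(\phi)+C(\phi)$ with $C\in\FP$, then $\ov{f}\in\SP$ tells you nothing about $N$: the class $\SP$ is not closed under subtracting an $\FP$ function --- that is the $\SP$ versus $\GapP$ distinction at the heart of this survey --- so no collapse follows. Your fallback via the toolkit of \cite{IP22} also does not apply as stated: that machinery concerns polynomial maps of functions already known to be in $\SP$, whereas here $f=2^m-\ov{f}$ is itself not known to be in $\SP$ (that is Open Problem~\ref{op:sub-Ham}); and the observation that $\ov{f}$ attains its maximum $2^m$ exactly on the $\coNP$-complete set of non-Hamiltonian graphs does not by itself contradict $\ov{f}\in\SP$, since certifying $\ov{f}(G)=2^m$ from a $\SP$ verifier would require exhibiting exponentially many witnesses. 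What is missing, and what would constitute the actual theorem, is an exact (not merely additive-up-to-$\FP$) correspondence between the counted objects of a hard problem and the non-Hamiltonian spanning subgraphs.
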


\smallskip

\subsection{Spanning forests}\label{ss:sub-forests}
Let $G=(V,E)$ be a simple connected graph with $n=|V|$ vertices, and let
$F(G,k)$ denote the number of spanning forests in~$G$ with~$k$ edges.
A special case of the celebrated result by Adiprasito, Huh and Katz \cite{AHK},
proves log-concavity of \ts $\{F(G,k)\}$:
$$(\star) \qquad
F(G,k)^2 \, \ge \, F(G,k-1)\cdot F(G,k+1) \quad \text{for all} \quad 1\le k \le n-2.
$$
Following~\cite{Pak19}, define \ts $f(G,k):= F(G,k)^2 - F(G,k-1)\ts F(G,k+1)$.

\begin{conj}\label{conj:sub-forest}
Function \ts $f$ \ts is not in \ts $\SPr$.
\end{conj}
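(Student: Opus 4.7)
The plan is to follow the general framework developed in \cite{IP22} for proving non-membership in $\SP$. The main leverage is that $f\in \SP$ would place the decision problem $\{(G,k):f(G,k)>0\}$ in $\NP$, and hence its complement $\{(G,k):f(G,k)=0\}$ in $\coNP$. If one can show that the equality-detection problem $f(G,k)=0$ is $\NP$-hard, then $\NP\subseteq \coNP$, collapsing the polynomial hierarchy and contradicting standard complexity assumptions. So the whole approach reduces to proving hardness of the equality case in the log-concavity inequality $(\star)$.

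First I would try to obtain a combinatorial (or matroidal) characterization of the equality cases for graphic matroids: for which $(G,k)$ does one have $F(G,k)^2=F(G,k-1)\cdot F(G,k+1)$? Note that the Adiprasito--Huh--Katz proof of $(\star)$ goes through Hodge-theoretic positivity on the Chow ring and does not naturally produce an injective certificate of strict inequality, in sharp contrast with Krattenthaler's injection for Heilmann--Lieb discussed in $\S$\ref{ss:sub-match}. One would hope that equality is forced by restrictive structural features of $G$ (for instance, certain direct-sum or parallel-connection decompositions of the cycle matroid at level $k$), so that deciding equality encodes a nontrivial structural recognition problem. Given such a characterization, the second step would be to reduce a canonical $\coNP$-hard problem, say graph non-Hamiltonicity or $3$-UNSAT, to the vanishing of $f$ via gadget constructions on graphs that control the spanning-forest numbers $F(G,k-1), F(G,k), F(G,k+1)$ in a prescribed way; the delicate part is aligning the counts at three consecutive ranks simultaneously.

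The main obstacle, by some margin, is the first step. Equality in matroid log-concavity is a subject of active research and is poorly understood combinatorially even for graphic matroids, so a clean characterization suitable for hardness reductions may not be available. A more modest intermediate target, analogous to Conjectures~\ref{conj:sub-Ham} and~\ref{conj:sub-Ham-non}, would be to rule out $f\in \FP$ modulo $\SP\ne \FP$ by expressing $f(G,k)$ as a nonnegative integer combination of known $\SP$-hard quantities (such as evaluations of the Tutte polynomial), and then arguing that an efficient algorithm for $f$ would permit recovery of such evaluations. Even this weaker statement appears to require new combinatorial input beyond what $(\star)$ provides, which is why I expect Conjecture~\ref{conj:sub-forest} to remain out of reach without a substantive advance on the equality cases of Hodge--Riemann-type inequalities in matroid theory.
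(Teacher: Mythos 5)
The statement you are trying to prove is an \emph{open conjecture} in the paper, and the authors explicitly say they offer no proof: the stated evidence is only that they ``tried very hard and failed to show that $f\in\SP$,'' plus the Stanley quote in the footnote suggesting there may be no direct combinatorial proof of~$(\star)$. So there is no paper proof to compare against; what you have written is a proof plan, and you are candid that the key steps are missing. Within that frame, two substantive issues with your plan.

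First, your strategy hinges on showing that the equality-detection problem $\{(G,k):F(G,k)^2=F(G,k-1)F(G,k+1)\}$ is $\NP$-hard. But the analogous program is known to \emph{fail} for a closely related conjecture in this very paper. For Stanley's inequality (Conjecture~\ref{conj:LE-Stanley}), Shenfeld--van Handel proved that the equality case is decidable in polynomial time, so the vanishing of $\aN(k)^2-\aN(k-1)\aN(k+1)$ is in~$\poly$ and certainly not $\NP$-hard; yet the paper still conjectures that Stanley's inequality is not in~$\SP$. The paper's machinery for such situations is not $\NP$-hardness of vanishing but the \cite{IP22} framework of~$\S$\ref{ss:no-first}--\ref{ss:no-semi-alg}: extract a specific instance in which $f$ reduces to a known binomial-bad polynomial (e.g.\ $(x-y)^2$ as in the proof of Proposition~\ref{prop:sub-PM}) and invoke the oblivious/$\Obli$ results under the Binomial Basis Conjecture or the weaker assumptions of~$\S$\ref{ss:no-weaker}. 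Equality cases of Alexandrov--Fenchel type inequalities may well be poly-decidable for graphic matroids too, which would kill your approach outright, so you should at minimum hedge for that possibility and have the \cite{IP22} route ready as the fallback.

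Second, your ``more modest intermediate target'' of ruling out $f\in\FP$ is already settled and is not a weaker version of the conjecture. Section~\ref{ss:addendum-graphs} records that $\{F(G,k)\}$ is $\SP$-complete (via the Tutte evaluation $T(G;2,1)$), and that $\{f(G,k)\}$ is therefore $\SP$-hard by telescoping; so $f\notin\FP$ modulo $\FP\ne\SP$ is already in the paper. As emphasized in~$\S$\ref{ss:Main-obs}$(iii)$, $\SP$-hardness says nothing about membership in~$\SP$ in either direction, which is exactly why Conjecture~\ref{conj:sub-forest} is genuinely open and hard.
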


We have relatively little evidence in favor of this conjecture other
than we tried very hard and failed to show that $f\in \SP$.
The original proof was significantly strengthened and
simplified in~\cite{ALOV,BH20,CP21}
(see also \cite{CP22} for a friendly exposition).\footnote{In~\cite[p.~314]{Sta00},
Stanley writes about $(\star)$: ``\emph{Our own feeling is that these questions
have negative answers, but that the counterexamples will be huge and
difficult to construct.}''  We think of this quote as a suggestion that
there is no direct combinatorial proof of~$(\star)$, pointing
in favor of Conjecture~\ref{conj:sub-forest}.}

\smallskip

\subsection{Perfect matchings}\label{ss:sub-PM}
Let \ts $G=(V,E)$ \ts be a $k$-regular bipartite multigraph on $2n$ vertices,
and let $\PM(G)$ be the number of perfect matchings in~$G$.
The celebrated \defn{van der Waerden Conjecture}, now proved
(see e.g.\ \cite{vL82} and \cite[$\S$8.1]{LP86}), is equivalent to
$$
\PM(G) \. \ge \. \frac{k^n \ts n!}{n^n}\..
$$
Let \. $f(G):= n^n \ts \PM(G) - k^n  n!$ \. The following result
is a variation on \cite[Thm.~7.1.5]{IP22}, and shows that it
is unlikely that $f$ has a combinatorial interpretation.

\smallskip

\begin{prop}\label{prop:sub-PM}
Assume that edge multiplicities in graph~$G$ are given by \ts $\SPr$ \ts functions. \\
If \ts $f\in \SPr$, then \ts $\PHr=\Sigma_2^{\textsc{p}}$.
\end{prop}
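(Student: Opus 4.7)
The strategy is to leverage the $\SPr$-completeness of counting perfect matchings in bipartite graphs (Valiant) together with Toda's theorem, using the hypothesis $f\in\SP$ to force threshold queries on $\PM$ into $\NP$, which collapses $\PH$ to the second level. The argument proceeds in three phases.

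\emph{Phase~1 (reduction to the regular setting).} Given an arbitrary bipartite graph $H$ on $m{+}m$ vertices with biadjacency matrix $A_H\in\{0,1\}^{m\times m}$, I construct a $k$-regular bipartite multigraph $G_H$ whose biadjacency matrix has the form $N\cdot A_H + E$, where $N$ is a sufficiently large integer and $E$ is a nonnegative correction matrix chosen so that every row and column of the total sums to the common value~$k$; the existence of such an $E$ with polynomial entries (modulo choice of $k$) is the standard transportation/$b$-matching step and lies in $\FP$. Expanding the permanent yields a polynomial $\PM(G_H)=\sum_{i=0}^m a_i N^i$ whose top coefficient is $a_m=\PM(H)$, so that $\PM(H)$ is recoverable from $\PM(G_H)$ in $\FP$. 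Allowing $\SP$-valued multiplicities gives the extra flexibility to engineer the construction (as in the variation of \cite[Thm.~7.1.5]{IP22}) so that the ``van der Waerden defect'' $f(G_H)=n^n\PM(G_H)-k^n n!$ is polynomially bounded after an $\FP$-computable affine adjustment, while still encoding $\PM(H)$.

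\emph{Phase~2 (decision version via $f\in\SP$).} Since $f\in\SP$, the predicate ``$f(G)\ge T$?'' lies in $\NP$ whenever $T$ is polynomially bounded: the verifier guesses $T$ pairwise distinct witnesses of polynomial length and runs the $\SP$-verifier for $f$ on each. Because $k^n n!$ is an $\FP$-computable integer (it has polynomial bit-length whenever $k$ does, which is the regime the construction keeps us in), the predicate ``$\PM(G_H)\ge (k^n n!+T)/n^n$?'' reduces to ``$f(G_H)\ge T$?''. By Phase~1 the interesting range of $T$ is polynomially bounded, so threshold queries on $\PM(G_H)$ (and hence on $\PM(H)$, via the polynomial extraction in $N$) lie in $\NP$. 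Binary search with an $\NP$ oracle then computes $\PM(H)$ exactly in $\FP^{\NP}$.

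\emph{Phase~3 (collapse via Toda).} Counting perfect matchings in bipartite graphs is $\SPr$-complete under Turing reductions (Valiant), so Phases~1--2 give $\SP\subseteq\FP^{\NP}\subseteq\Delta_2^{\textsc{p}}$. Toda's theorem $\PH\subseteq\Pr^{\SP[1]}$ then lets us absorb the single $\SP$-oracle call into an $\NP$-oracle computation, yielding $\PH\subseteq\Pr^{\NP}=\Delta_2^{\textsc{p}}\subseteq\Sigma_2^{\textsc{p}}$, which is the stated collapse. The main obstacle is Phase~1: the regularization $G_H$ must simultaneously be (i)~genuinely $k$-regular with $k$ of polynomial bit-length, (ii)~equipped with $\SP$-valued multiplicities, and (iii)~arranged so that $f(G_H)$ -- not just $\PM(G_H)$ -- lands in a polynomially bounded window after an $\FP$-invertible correction. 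Getting all three simultaneously is exactly what the ``variation'' on \cite[Thm.~7.1.5]{IP22} provides, and is the technical heart of the proof.
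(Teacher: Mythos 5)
There is a genuine gap, and it sits exactly where you located it: Phase~1. For Phase~2 to work you need the defect \ts $f(G_H)=n^n\ts\PM(G_H)-k^n\ts n!$ \ts to land in a polynomially bounded window (otherwise the threshold predicate is not an $\NP$ predicate, since one cannot guess exponentially many witnesses). But the van der Waerden bound is exponentially far from tight for generic $k$-regular bipartite multigraphs (compare Bregman's upper bound \ts $(k!)^{n/k}$ \ts with the lower bound \ts $k^n n!/n^n$), and forcing near-equality forces the multigraph to be essentially uniform, which destroys any encoding of $\PM(H)$. So your condition~(iii) is not a technicality that the cited ``variation'' supplies --- it is in tension with conditions (i)--(ii), and the $\FP^{\NP}$-binary-search/Toda route collapses with it. Note also that nothing in the statement asks you to \emph{compute} permanents; the conclusion is only a conditional collapse, which signals that a substitution argument, not a reduction-to-counting argument, is intended.

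The intended proof is two lines and uses the $\SPr$-valued multiplicities in an essential but quite different way. Fix \ts $n=2$ \ts and the four-vertex bipartite graph with edges \ts $(a_1,b_1),(a_2,b_2)$ \ts of multiplicity \ts $x_1$ \ts and \ts $(a_1,b_2),(a_2,b_1)$ \ts of multiplicity \ts $x_2$; this is $k$-regular with \ts $k=x_1+x_2$ \ts and \ts $\PM(G)=x_1^2+x_2^2$, so
$$
f(G) \, = \, 4\ts(x_1^2+x_2^2) \. - \. 2\ts(x_1+x_2)^2 \, = \, 2\ts(x_1-x_2)^2.
$$
Substituting two independent counting-complete $\SPr$ functions for \ts $x_1,x_2$ \ts (exactly what the hypothesis on multiplicities permits), membership \ts $f\in\SPr$ \ts would place \ts $(x-y)^2$ \ts of independent $\CCF$ functions in $\SPr$, and by Corollary~2.3.2 of \cite{IP22} (cf.~$\S$\ref{ss:no-weaker}) this forces \ts $\PHr=\Sigma_2^{\textsc{p}}$. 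No regularization of arbitrary bipartite graphs, no binary search, and no appeal to Toda's theorem is needed.
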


\begin{proof}
Let \ts $n=2$, \ts $V=\{a_1,a_2,b_1,b_2\}$, and let $E$ consists
of edges $(a_1,b_1)$ and $(a_2,b_2)$ with multiplicity~$x_1$,
edges $(a_1,b_2)$ and $(a_2,b_1)$ with multiplicity~$x_2$.
Then $G=(V,E)$ is bipartite and $k$-regular, where \ts $k=x_1+x_2$.
We have \ts $\PM(G)=x_1^2+x_2^2$ \ts and
$$
f(G) \. = \. 4\ts \PM(G) \. - \. 2 \ts k^2 \. = \.  4\ts (x_1^2+x_2^2) \. - \. 2\ts (x_1+x_2)^2
\. = \.  2\ts (x_1-x_2)^2\ts.
$$
The result now follows from Corollary~2.3.2 in~\cite{IP22}.
\end{proof}

Compare this result with \defn{Schrijver's inequality}~\cite{Schr98}
$$
\PM(G) \. \ge \. \left(\frac{(k+1)^{k-1}}{k^{k-2}}\right)^n\.,
$$
for all \. $n\ge k \ge 3$.  An elementary proof of the $k=3$ case
is given in~\cite{Voo}. We challenge the reader to give a direct
combinatorial proof of this inequality for any fixed~$k>3$.

\smallskip

Finally, let us mention \defn{Bregman's inequality} \cite{Bre73}
formerly known as \defng{Minc's conjecture} (see also \cite[$\S$6.2]{Minc78}).
In the special case of $k$-regular bipartite
simple graphs, the setting of the former \defng{Ryser's conjecture}, it gives \.
$\PM(G)^k \le (k!)^{n}$.  Since the proof in this case is relatively
short, it would be interesting to see if this inequality is in \ts $\SP$.

\smallskip

\subsection{Bunkbed conjecture}\label{ss:sub-bunkbed}
Let $G=(V,E)$ be a multigraph.  Denote by $G\times K_2$ the \defn{bunkbed graph}
obtained as a Cartesian product. Formally, two copies of $G$ are connected by parallel edges
as follows:  each vertex $v\in V$ corresponds to vertices $v_0=(v,0)$ and $v_1=(v,1)$
which form an edge $(v_0,v_1)$.

For vertices $v,w\in V$, denote by \ts $B_0(v,w)$ \ts and \ts $B_1(v,w)$ \ts the number of spanning
subgraphs~$H$ of $G\times K_2$, such that \. $v_0\lra_H w_0$ \. and  \.
$v_0\lra_H w_1\ts$, respectively.
In other words, we are counting subgraphs where $w_0$ or~$w_1$ lie in the same
connected component as~$v_0$.

\begin{conj}[{\rm \defn{Bunkbed conjecture}}{}]\label{conj:sub-bunkbed}
For all \. $G=(V,E)$ \. and all \. $v,w\in V$, we have \. $B_0(v,w) \ge B_1(v,w)$.
\end{conj}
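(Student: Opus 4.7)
The plan is to establish $B_0(v,w) \ge B_1(v,w)$ by constructing an injection $\Phi$ from the family $\mathcal{S}_1$ of spanning subgraphs $H$ of $G\times K_2$ with $v_0 \lra_H w_1$ into the family $\mathcal{S}_0$ of those with $v_0 \lra_H w_0$. Even a non-explicit such injection would yield the inequality, and in fact put the function $B_0 - B_1$ into $\SP$. The natural mechanism is a switching argument involving the layer involution $\sigma\colon (v,i)\mapsto(v,1-i)$ and the vertical edges $(u_0,u_1)$, $u\in V$, which are the only edges fixed setwise by $\sigma$.

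Concretely, I would fix a canonical ordering on the edges of $G\times K_2$. Given $H\in \mathcal{S}_1$, let $P\subseteq H$ be the lex-smallest simple path from $v_0$ to $w_1$, and let $(u_0,u_1)$ be the last vertical edge that $P$ traverses, so that the portion of $P$ from $u_1$ to $w_1$ lies entirely in the upper layer. One would define $\Phi(H)$ by reflecting across $\sigma$ a carefully chosen part of $H$ containing the tail of $P$, together with toggling an appropriate pair of vertical edges, so that (a) the new subgraph lies in $\mathcal{S}_0$, and (b) the data $(P,u)$ can be recovered from $\Phi(H)$, which would guarantee injectivity.

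The main obstacle is that this naive switching essentially never works uniformly. Toggling vertical edges can create or destroy connections outside $P$, carrying the image out of $\mathcal{S}_0$, and the lex-smallest path in $\Phi(H)$ typically fails to be the image of $P$, destroying injectivity. The Bunkbed Conjecture is a famously resistant problem of Kasteleyn, verified only in restricted families (trees, certain wheels, complete graphs, asymptotic percolation regimes, etc.); this track record strongly suggests that no uniform switching can work. A standard fallback is the probabilistic reformulation: establish $\pp(v_0\lra w_0) \ge \pp(v_0\lra w_1)$ under Bernoulli$(p)$ edge percolation on $G\times K_2$ for every $p\in(0,1)$, and expand in $p$ to recover the counting inequality. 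Here one would look for an FKG- or Holley-type monotonicity tailored to the $\sigma$-symmetry.

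The hard part, in either formulation, is handling configurations where the two layers become entangled through many non-vertical paths so that any $\sigma$-equivariant comparison breaks down. In light of the examples and remarks in the paper, I suspect this is not merely a difficulty of exposition but a genuine structural obstruction: a proof (if any) would likely require either a highly non-uniform correspondence or probabilistic ideas substantially beyond classical coupling, and is a natural candidate for the kind of problem whose ``combinatorial interpretation'' status (here, of $B_0 - B_1$) may itself be out of reach with current tools.
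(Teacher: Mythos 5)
The statement you were asked to prove is labeled a \emph{conjecture} in the paper (attributed to Kasteleyn), and the paper itself offers no proof of it --- indeed it immediately follows the statement with the complementary Conjecture~\ref{conj:sub-Bunkbed}, asserting that the function $B_0-B_1$ is \emph{not} in~$\SP$, and explains that this would rule out precisely the sort of poly-time computable injection $\Phi: \cB_1(v,w)\to\cB_0(v,w)$ you sketch. So there is no proof in the paper to compare against, and your proposal --- which correctly stops short of claiming one --- is the right response.

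Your analysis of where the switching argument breaks is sound and matches the paper's thesis: the paper's footnote makes exactly the point that a poly-time injection with poly-time (partial) inverse would place $B_0-B_1$ in $\SP$, and Conjecture~\ref{conj:sub-Bunkbed} is the formal statement that this should not happen. Your instinct that the $\sigma$-symmetry alone cannot tame the entanglement between layers, and that an FKG/Holley-type correlation inequality is the natural fallback, is also consistent with the discussion elsewhere in the paper (e.g.~$\S$\ref{ss:sub-Kle} on Harris/AD inequalities not being in~$\SP$ under the~BBC). One small correction: the known special cases the paper cites are complete graphs and complete bipartite graphs (plus the $p\uparrow 1$ limit in a reference), not trees or wheels. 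But the substance of your assessment --- that no uniform combinatorial comparison is known and likely none exists --- is exactly what the paper argues, making this not a gap in your proposal but a correct diagnosis of the problem's status.
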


This conjecture was formulated by Kasteleyn (c.~1985), see \cite[Rem.~5]{BK01}, in
the context of \defng{percolation}, and has become popular in the past two
decades, see e.g.\ \cite{Hag03,Lin11} and most recently
\cite{Gri22,HNK21}.\footnote{The conjecture is usually formulated more generally,
as an inequality for $p$-percolation. Replacing edges with series-parallel
graphs simulates $p$-percolation on $G\times K_2$ for all rational~$p$, and shows
that two formulations are equivalent. }
The fact that it is notoriously difficult to establish,
combinatorially or otherwise, suggests the following:

\begin{conj}\label{conj:sub-Bunkbed}
Function \ts $B_0-B_1$ \ts is not in \ts $\SPr$.
\end{conj}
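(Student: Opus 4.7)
The plan is to adapt the argument of Proposition~\ref{prop:sub-PM}: I would allow the edge multiplicities in $G$ to range over $\SPr$ functions and exhibit a small constant-size bunkbed gadget in which the polynomial $P_G(x_1,\ldots,x_k) := B_0(v,w)-B_1(v,w)$, expanded in the multiplicities $x_e$, specializes to a form forbidden by Corollary~2.3.2 of~\cite{IP22}, such as $c\.(x_1-x_2)^2$ with two independent $\SPr$ parameters. In the template of Proposition~\ref{prop:sub-PM} the van der Waerden deficit collapsed on a size-two bipartite gadget to $2(x_1-x_2)^2$; here the bunkbed structure is richer, but one hopes that a constant-size $G$ still yields an analogous algebraic identity.

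Concretely, the first step is to compute $P_G$ for candidate base graphs by inclusion--exclusion over the $2|E(G)|+|V(G)|$ edges of $G\times K_2$, classifying subgraphs by the connectivity pattern they induce on $\{v_0,w_0,v_1,w_1\}$. Natural candidates are short theta graphs between $v$ and $w$ with two parallel branches carrying multiplicities $x_1$ and $x_2$ and all posts set to $1$; the $\mathbb{Z}/2\mathbb{Z}$ symmetry that swaps the two copies of $G$, together with the $x_1\leftrightarrow x_2$ symmetry that swaps the branches, suggests that $P_G$ vanishes on the diagonal $x_1=x_2$ and is therefore divisible by $(x_1-x_2)^2$. Once such a factorization $P_G = (x_1-x_2)^2 \cdot Q(x_1,x_2)$ is in hand with $Q$ positive on the relevant range, one lets $x_1, x_2$ range over arbitrary $\SPr$ functions and invokes Corollary~2.3.2 of~\cite{IP22} to conclude that $(B_0-B_1)\in\SPr$ would force $\PH=\Sigma_2^{\textsc{p}}$, contradicting standard hypotheses.

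The hard part will be producing the right gadget. A direct inclusion--exclusion on the simplest example --- a single edge of multiplicity $a$ between $v$ and $w$ with unit posts --- yields $B_0-B_1 = 2^a(2^a-1)$, which is transparently in $\SPr$. This tells us that one-parameter, highly symmetric gadgets cannot suffice, and that the sought identity must involve at least two independent multiplicities interacting \emph{nonlinearly} through bunkbed paths of length $\ge 2$. A secondary subtlety is that Conjecture~\ref{conj:sub-bunkbed} is itself open, so $B_0-B_1$ is not a priori $\nn$-valued; one sidesteps this either by adopting Conjecture~\ref{conj:sub-bunkbed} as a working hypothesis, or by restricting to families of gadgets in which $B_0\ge B_1$ is forced for structural reasons, so that nonmembership in $\SPr$ remains a meaningful assertion rather than a vacuous one.
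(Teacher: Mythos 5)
First, be aware that the paper offers no proof of this statement: it is posed as an open conjecture, supported only by the informal observations that a poly-time injection \ts $\cB_1(v,w)\to\cB_0(v,w)$ \ts would contradict it and that it is complementary to Conjecture~\ref{conj:sub-bunkbed}. So you are proposing an attack on an open problem, and I assess it as such.

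The central gap is that the multiplicity-substitution template of Proposition~\ref{prop:sub-PM} does not transfer, because \ts $\PM(G)$ \ts is \emph{multilinear} in the edge multiplicities while \ts $B_0-B_1$ \ts is not even polynomial in them. Each parallel copy of an edge is independently present or absent in a spanning subgraph, so a bundle of multiplicity \ts $x_e$ \ts contributes weight \ts $2^{x_e}-1$ \ts to ``connected'' and \ts $1$ \ts to ``not connected''; your own single-edge computation \ts $B_0-B_1=2^a(2^a-1)$ \ts already exhibits this. Consequently, if the \ts $x_e$ \ts are genuinely hard \ts $\SPr$ \ts functions (which is what Corollary~2.3.2 of~\cite{IP22} requires), then \ts $B_0-B_1$ \ts is doubly exponential in the input and lies outside \ts $\SPr$ \ts for trivial size reasons --- a vacuous conclusion that says nothing about the actual bunkbed function --- while if the \ts $x_e$ \ts are given in unary, each \ts $2^{x_e}$ \ts is in \ts $\FPr$ \ts and any fixed gadget yields an \ts $\FPr$ \ts (hence \ts $\SPr$) quantity, so no hardness can be extracted. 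A second gap is the hoped-for factorization \ts $P_G=(x_1-x_2)^2\ts Q$: the two symmetries you invoke only make \ts $P_G$ \ts symmetric in \ts $x_1,x_2$, they do not force it to vanish at \ts $x_1=x_2$. In Proposition~\ref{prop:sub-PM} the perfect square appears because the van der Waerden bound is \emph{tight} at the symmetric point, whereas the bunkbed inequality is strict whenever \ts $v$ \ts and \ts $w$ \ts can be connected at all (again, \ts $2^a(2^a-1)>0$), so there is no equality locus to produce a square. More fundamentally, the paper's ``nothing comes from nothing'' program derives non-membership by simulating the ingredients of a known proof of nonnegativity, and for the bunkbed difference no such proof exists --- which is precisely why the statement is left as a conjecture with no argument attached.
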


At first glance this might seem contradictory to the bunkbed conjecture,
but notice that it only says that if Conjecture~\ref{conj:sub-bunkbed} holds
then it holds for ``non-combinatorial reasons'', like the van der Waerden
Conjecture.  More precisely, Conjecture~\ref{conj:sub-Ham-non}
rules out a simple direct injection establishing \ts
$B_1 \le B_0$.\footnote{Formally, denote by $\cB_0(v,w)$
and $\cB_1(v,w)$ the sets of subgraphs counted by $B_0$ and $B_1$, respectively.
Suppose there exists an injection \ts $\vp: \cB_1(v,w) \to \cB_0(v,w)$,
s.t.\  both \ts $\vp$ \ts and \ts $\vp^{-1}$ (where defined) are computable in
polynomial time.  Then \ts $B_0(v,w)-B_1(v,w)$ \ts has a combinatorial
interpretation as the number of elements in \ts $\cB_0(v,w)\sm \vp\big(\cB_1(v,w)\big)$.
}
On the other hand, if
Conjecture~\ref{conj:sub-bunkbed} is false, then Conjecture~\ref{conj:sub-Ham-non}
is trivially true.  In other words,  Conjecture~\ref{conj:sub-Ham-non} is
\emph{complementary} to the bunkbed conjecture and could be easier to resolve.

Let us note that the bunkbed conjecture is known in a few special cases,
such as complete graphs \cite{vHL19} and complete bipartite
graphs~\cite{Ric22}.  It would be interesting to see if the proofs
imply that \ts $B_0-B_1\in \SP$ \ts in all these cases.

\smallskip

\subsection{Kleitman's inequality}\label{ss:sub-Kle}
Let \ts $\cA$ \ts be a collection of labeled graphs on $[n]=\{1,\ldots,n\}$.
We say that $\cA$ is \defn{hereditary}, if for every $G\in \cA$ and
every spanning subgraph $H$ of~$G$, we have $H \in \cA$.  Examples of hereditary
properties include \emph{planarity}, \emph{$3$-colorability}, \emph{triangle-free},
\emph{non-connectivity}, \emph{non-Hamiltonicity}, and not containing a perfect matching.

\begin{thm}[{\rm Kleitman~\cite{Kle66}}{}]\label{t:sub-Kle}
Let \ts $\cA$ \ts and \ts $\cB$ \ts be hereditary collections of labeled graphs on~$[n]$.
Then:
$$
|\cA| \cdot |\cB| \, \le \, 2^{\binom{n}{2}} \cdot |\cA\cap\cB|\ts.
$$
\end{thm}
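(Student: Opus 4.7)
The plan is to translate the statement into a general inequality about hereditary (downward-closed) families of subsets of an $N$-element ground set, and then prove it by induction on $N$. A labeled graph on $[n]$ is nothing but a subset of $E(K_n)$, so identifying $\cA, \cB$ with downward-closed families $\cF, \cG \subseteq 2^{[N]}$, where $N=\binom{n}{2}$, the claim becomes Kleitman's classical lemma
$$
|\cF|\cdot|\cG| \, \le \, 2^N \cdot |\cF\cap\cG|\ts.
$$
(This is the discrete positive-correlation statement for two decreasing events under uniform measure, a special case of the FKG inequality; I will instead give the elementary induction, which is what fits the level of the survey.)

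The base case $N=0$ is trivial since any hereditary family in $2^\emptyset$ is either $\emptyset$ or $\{\emptyset\}$. For the inductive step, pick an element $e\in[N]$ and split each family by membership of $e$:
$$
\cF_0 \, := \, \{S\in\cF: e\notin S\}, \qquad \cF_1 \, := \, \{S\sm\{e\}: S\in\cF,\, e\in S\}\ts,
$$
and analogously $\cG_0, \cG_1$. Both pieces are hereditary families on $[N]\sm\{e\}$, and the crucial consequence of hereditariness is $\cF_1\subseteq\cF_0$ and $\cG_1\subseteq\cG_0$. Writing $a_i := |\cF_i|$, $b_j := |\cG_j|$, $c_{ij} := |\cF_i\cap\cG_j|$, we have
$$
|\cF| = a_0+a_1, \quad |\cG| = b_0+b_1, \quad |\cF\cap\cG| = c_{00}+c_{11}\ts.
$$

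Applying the inductive hypothesis to each of the four pairs $(\cF_i,\cG_j)$ on the ground set of size $N-1$ gives $2^{N-1}c_{ij}\ge a_i b_j$, and summing over all four pairs yields
$$
2^{N-1}\bigl(c_{00}+c_{01}+c_{10}+c_{11}\bigr) \, \ge \, (a_0+a_1)(b_0+b_1)\ts.
$$
Thus it suffices to show $c_{00}+c_{11}\ge c_{01}+c_{10}$, for then the left-hand side above is bounded by $2^{N-1}\cdot 2(c_{00}+c_{11})=2^N|\cF\cap\cG|$, completing the induction.

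The remaining inequality $c_{00}+c_{11}\ge c_{01}+c_{10}$ is where the hereditariness is used a second time. Rearranging,
$$
(c_{00}-c_{01}) \, - \, (c_{10}-c_{11}) \, = \, \bigl|\cF_0\cap(\cG_0\sm\cG_1)\bigr| \. - \. \bigl|\cF_1\cap(\cG_0\sm\cG_1)\bigr| \, \ge \, 0\ts,
$$
since $\cF_1\subseteq\cF_0$. This is the only non-mechanical step, and it is immediate once the decomposition is set up correctly; so there is no real obstacle once one commits to the induction on $|E(K_n)|$ rather than on $n$. Specializing to $N=\binom{n}{2}$ recovers the stated bound $|\cA|\cdot|\cB|\le 2^{\binom{n}{2}}|\cA\cap\cB|$.
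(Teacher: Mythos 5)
Your proof is correct, and it is essentially the classical inductive argument of Kleitman that the paper cites from~\cite{Kle66} without reproducing. The paper gives no proof of this theorem itself --- it only notes afterwards that Proposition~\ref{p:sub-Kle} ``follows from Kleitman's original proof'' --- so your reconstruction of the split-on-one-coordinate induction, using hereditariness twice (once to get $\cF_1\subseteq\cF_0$, $\cG_1\subseteq\cG_0$, and once to deduce $c_{00}+c_{11}\ge c_{01}+c_{10}$), is exactly the standard route.
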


This \defn{Kleitman's inequality} is easier
to understand in probabilistic terms, as having a
positive correlation between uniform random graph events:
$$
\pp[\ts G\in \cA\ts ] \. \le \. \pp[\ts G\in \cA\,| \, G\in \cB\ts ]\ts.
$$
It is then natural to ask if Kleitman's inequality is in~$\SP$.

\begin{prop}\label{p:sub-Kle}
Let \ts $\cA$ \ts and \ts $\cB$ \ts be hereditary collections of labeled graphs on~$[n]$,
such that the membership problems \ts $G\in^?\cA$ \ts and  \ts $G\in^?\cB$ \ts are in~$\Pr$.
Then:
$$
2^{\binom{n}{2}} \cdot |\cA\cap\cB| \. - \.
|\cA| \cdot |\cB| \. \in \. \SPr \ts.
$$
\end{prop}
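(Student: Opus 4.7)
The plan is to combinatorialize Kleitman's inductive proof of Theorem~\ref{t:sub-Kle}. Fix an ordering $e_1, \ldots, e_m$ of the edges of $K_n$, where $m = \binom{n}{2}$. Throughout the argument, for any hereditary families $\cA, \cB$ on an edge set $E \subseteq \{e_1, \ldots, e_m\}$, write
$\phi(\cA, \cB, E) := 2^{|E|}\, |\cA \cap \cB| \. - \. |\cA|\cdot|\cB|$.
If $E \ne \emptyset$, let $e$ be the lowest-indexed edge in $E$ and set $\cA_0 := \{G \in \cA : e \notin G\}$, $\cA_1 := \{G \sm \{e\} : G \in \cA,\, e \in G\}$, both viewed as hereditary families on $E' := E \sm \{e\}$; hereditariness forces $\cA_1 \subseteq \cA_0$, and similarly $\cB_1 \subseteq \cB_0$. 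A direct manipulation yields
$$
\phi(\cA, \cB, E) \, = \, 2\ts\phi(\cA_0, \cB_0, E') \, + \, 2\ts\phi(\cA_1, \cB_1, E') \, + \, \bigl(|\cA_0| - |\cA_1|\bigr)\bigl(|\cB_0| - |\cB_1|\bigr),
$$
with all three summands nonnegative (the first two by induction, the last from the containments). The base case $E = \emptyset$ forces $\phi = 0$.

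I will describe witnesses for $\phi$ recursively in accordance with this identity: a witness is either \textup{(i)} a triple $(b, c, w)$ with $b, c \in \{0,1\}$ and $w$ a sub-witness for $\phi(\cA_b, \cB_b, E')$, where the bit $c$ absorbs the factor of~$2$; or \textup{(ii)} a pair $(G_1, G_2)$ with $G_1 \in \cA_0 \sm \cA_1$ and $G_2 \in \cB_0 \sm \cB_1$ marking termination. Unrolled, every witness has the form $(b_1, c_1, \ldots, b_k, c_k, G_1, G_2)$ for some $k \le m$, where $(G_1, G_2)$ is a pair of subsets of $E \sm \{e_1, \ldots, e_{k+1}\}$. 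A straightforward induction on $|E|$ shows the number of witnesses equals $\phi(\cA, \cB, E)$.

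The key point for membership in $\SP$ is that testing membership in any ``slice'' $\cA_{b_1 \ldots b_k}$ reduces in polynomial time to the original membership problem for $\cA$: a subset $G \subseteq E \sm \{e_1, \ldots, e_k\}$ lies in the slice if and only if $G \cup \{e_i : b_i = 1\} \in \cA$. Thus verifying a candidate $(b_1, c_1, \ldots, b_k, c_k, G_1, G_2)$ against the top-level problem $\phi(\cA, \cB, E)$ requires only four membership queries: $G_1 \cup \{e_i : b_i = 1\} \in \cA$, $\,G_1 \cup \{e_i : b_i = 1\} \cup \{e_{k+1}\} \notin \cA$, and the two analogous conditions for $\cB$. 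Each query is in $\poly$ by hypothesis, and the witness has length $O(n^2)$, so the overall verifier runs in polynomial time.

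The main hurdle is organizing the recursion so that a single witness stays polynomial in length rather than encoding a full recursion tree of depth $\binom{n}{2}$: each witness traces just one path of bits down the tree before terminating at a single correction pair, keeping its length $O(n^2)$ rather than $2^{\Omega(n^2)}$. Once this is arranged, the $\poly$ membership hypotheses for $\cA$ and $\cB$ do all the remaining work --- they are precisely what is needed to verify slice-membership and so convert the combinatorial description above into a bona fide polynomial-time verifier, establishing \ts $2^{\binom{n}{2}}\cdot|\cA\cap\cB| - |\cA|\cdot|\cB| \in \SP$.
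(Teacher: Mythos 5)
Your proof is correct, and it is in essence the argument the paper alludes to by saying ``the result follows from Kleitman's original proof'': Kleitman's induction splits on the presence of a fixed edge, and you have turned each stage of that induction into a poly-time-verifiable witness via the identity $\phi(\cA,\cB,E)=2\phi(\cA_0,\cB_0,E')+2\phi(\cA_1,\cB_1,E')+(|\cA_0|-|\cA_1|)(|\cB_0|-|\cB_1|)$, which checks out. Your observation that a witness traces only one root-to-leaf path of the recursion --- so its length is $O(n^2)$ --- and that slice membership reduces to two $\cA$-queries and two $\cB$-queries is exactly the point that makes the verifier run in polynomial time.
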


The result follows from Kleitman's original proof.
In this context, let us mention the \defng{Ahlswede--Daykin {\em (AD)} inequality},
which is an advanced generalization of Kleitman's inequality, see $\S$\ref{ss:no-semi-alg}.
Other classical inequalities such as the \defng{FKG inequality} and the \defng{XYZ inequality}
(see e.g.\ \cite[Ch.~6]{AS16}) are direct consequences of the AD~inequality.

In \cite[Prop.~2.5.1]{IP22}, we prove that assuming the
(univariate) \defng{Binomial Basis Conjecture} (BBC), then
AD~inequality is not in~$\SP$.  In fact, our proof shows that already
\defng{Harris inequality} \cite{Har60} is not in~$\SP$ under~BBC.
This is in sharp contrast to Proposition~\ref{t:sub-Kle}.

\smallskip

\subsection{Ising model}\label{ss:sub-Ising}
In this section, we consider a counting version of the \defng{Ising model},
see e.g.\ \cite[$\S$1.7]{Bax82} for the introduction.

Let $G=(V,E)$ be a multigraph with $n=|V|$ vertices and $m=|E|$ edges.
For a subset $S\subseteq V$, let \.
$$\aligned
\al(S) \, & := \, \{(v,w)\in E\,:\,v,w\in S\} \. \cup \. \{(v,w)\in E\,:\,v,w\notin S\}.
\endaligned
$$
Define the \defn{correlation function}\footnote{Compared to the original version
in \cite{Gri67,KS68}, we modify the definition by fixing the same weight \ts
$(\log 2)$ \ts on all edges in~$E$, so the correlation functions have integral values.
Since our graphs can have multiple edges, both the Griffiths and the GKS inequalities
remain equivalent to the original.}
$$
\Cor(v,w) \, := \, \sum_{S\subseteq V\,: \, v,w\ts \in \ts \al(S)} \. 4^{|\al(S)|} \ - \.
\sum_{S\subseteq V\,:\, v,w\ts \notin \ts \al(S)} \. 4^{|\al(S)|}\,.
$$
Note that the statistical sum here is over \defng{induced subgraphs} rather
than the spanning subgraphs in the previous two problems.

Griffiths~\cite{Gri67} showed
that \ts $\Cor(v,w)\ge 0$ \ts by an inductive combinatorial argument.  When
untangled, it can be used to prove the following:
\smallskip

\begin{prop}\label{p:Ising-Cor-functions}
The correlation function \. $\Cor: (G,v,w)\to \nn$ \ts is in \ts $\SPr$.
\end{prop}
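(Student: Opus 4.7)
The plan is to rewrite $\Cor(v,w)$ as a manifestly nonnegative weighted sum over edge subsets $F \subseteq E$, in the spirit of the high-temperature / random cluster expansion that underlies the classical combinatorial proof of the Griffiths inequality. Starting from $4 = 1 + 3$ on every edge, I would expand
\[
4^{|\al(S)|} \, = \, \prod_{e \in E} \bigl(1 \. + \. 3 \cdot [e\in \al(S)]\bigr) \, = \, \sum_{F\subseteq E} 3^{|F|}\.[F\subseteq \al(S)],
\]
and substitute into the definition of $\Cor(v,w)$. Swapping the order of summation yields
\[
\Cor(v,w) \, = \, \sum_{F \subseteq E} 3^{|F|} \sum_{S \subseteq V\,:\,F\subseteq \al(S)} \ep_{vw}(S),
\]
where $\ep_{vw}(S) := +1$ if $v$ and $w$ lie on the same side of the partition $\{S, V\sm S\}$, and $-1$ otherwise.

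The next step is to evaluate the inner sum. The condition $F \subseteq \al(S)$ forces each connected component of $(V,F)$ to lie entirely in $S$ or entirely in $V \sm S$; hence, if $C_1, \ldots, C_{k(F)}$ denote the components of $(V,F)$, the sets $S$ compatible with $F$ are parametrized by assignments $\sigma \colon \{C_1,\ldots,C_{k(F)}\} \to \{0,1\}$, and $\ep_{vw}(S) = (-1)^{\sigma(C_v)+\sigma(C_w)}$, where $C_v$ and $C_w$ denote the components containing $v$ and $w$. If $C_v = C_w$, every term equals $+1$ and the inner sum equals $2^{k(F)}$; if $C_v \neq C_w$, the factor $\sum_{a,b \in \{0,1\}}(-1)^{a+b} = 0$ appears and the sum vanishes. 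Therefore
\[
\Cor(v,w) \, = \, \sum_{F \subseteq E \,:\, v \.\sim_F\. w} 3^{|F|}\.2^{k(F)},
\]
where $v \sim_F w$ means that $v$ and $w$ lie in the same connected component of $(V,F)$.

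From this closed form the $\SPr$ membership is immediate. Witnesses are triples $(F, c, \tau)$, where $F \subseteq E$, $c \colon F \to \{1,2,3\}$, and $\tau$ is a $2$-coloring of the connected components of $(V,F)$. Each witness has polynomial size, and the verifier accepts iff $v \sim_F w$ in $(V,F)$, which is checkable in polynomial time by breadth-first search, and $c,\tau$ are well-formed labelings. The only real step that requires insight is guessing the expansion in the first place; the sign cancellation $\sum (-1)^{\sigma(C_v)+\sigma(C_w)} = 0$ for $C_v \neq C_w$ is what converts the signed Griffiths sum into a positive witness count, and the same argument handles multi-edges since each edge of $E$ is processed independently.
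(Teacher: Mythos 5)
Your proof is correct. The closed form you derive,
\[
\Cor(v,w) \, = \, \sum_{F\subseteq E\,:\,v\sim_F w} 3^{|F|}\,2^{k(F)},
\]
is precisely the Fortuin--Kasteleyn random-cluster representation of the Ising two-point function at $p=3/4$, $q=2$, which is what the fixed edge weight $\log 2$ (so $e^{2\beta}=4$) built into the definition of $\Cor$ corresponds to. The paper does not write out a proof --- it only asserts that Griffiths' \emph{inductive} combinatorial argument from \cite{Gri67}, ``when untangled,'' yields the claim --- so your route is genuinely different: a single non-inductive expansion that immediately exhibits the $\SPr$ witness structure (a sub-multiset $F\subseteq E$ joining $v$ to $w$, a labeling $c:F\to\{1,2,3\}$, and a vertex $2$-coloring $\tau:V\to\{0,1\}$ constant on the components of $(V,F)$, all checkable by BFS). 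It is also the \emph{right} expansion for the purpose: the other classical route to Griffiths' first inequality, the high-temperature expansion into even subgraphs, produces a prefactor $2^{|V|-|E|}$ which has a negative exponent whenever the multigraph has more edges than vertices, so it does not directly give an $\SPr$ count without a further cycle-space/parity argument; the random-cluster form replaces this by $2^{k(F)}\ge 1$ and sidesteps the issue. The only small polish I would add: to make the verifier literal, fix a canonical fixed-length encoding of the witness --- say $c:E\to\{0,1,2,3\}$ with the constraint $c(e)=0$ iff $e\notin F$, and $\tau\in\{0,1\}^V$ constant on $F$-components --- so that each accepted string is counted exactly once and the witness length is a single polynomial in the input size. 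That is routine, and your argument is otherwise complete and cleaner than what the paper sketches.
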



The \defn{Griffiths--Kelly--Sherman} \defn{\em (GKS)}
\defn{inequality} \cite{Gri67,KS68} is a  triangle-type inequality for the
correlation functions:
$$
2^m \. \Cor(v,w) \, \sum_{S\subseteq V} \. 4^{|\al(S)|} \, \ge
\, \Cor(v,u) \. \Cor(u,w) \quad \text{for all} \ \ u,\ts v,\ts w  \in V,
$$
cf.\ \cite[Thm.~3.14]{GP20} for the planar graphs case.


\begin{conj}\label{op:Ising-Cor-GKS}
The GKS inequality is not in \ts $\SPr$.
\end{conj}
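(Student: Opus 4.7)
The plan is to follow the template of Proposition~\ref{prop:sub-PM}: restrict the GKS difference to a parameterized family of multigraphs, compute the four relevant quantities in closed form, promote the edge multiplicities to $\SP$ functions, and invoke Corollary~2.3.2 of \cite{IP22} to deduce a collapse such as $\PH = \Sigma_2^{\textsc{p}}$ under the assumption that $f := 2^m \Cor(v,w)\sum_S 4^{|\al(S)|} - \Cor(v,u)\Cor(u,w)$ lies in $\SP$.

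The first concrete step is to work out the three-vertex triangle $V=\{v,u,w\}$ with edges $(v,u),(u,w),(v,w)$ of multiplicities $x_1, x_2, x_3$.  Writing $a = 4^{x_1}, b = 4^{x_2}, c = 4^{x_3}$ and $M = abc$, a direct enumeration over the eight subsets $S \subseteq V$ yields
\[
\Cor(v,w) = 2(M + c - a - b), \qquad \Cor(v,u) = 2(M + a - b - c),
\]
\[
\Cor(u,w) = 2(M + b - a - c), \qquad \sum_{S} 4^{|\al(S)|} = 2(M + a + b + c),
\]
together with $2^m = 2^{x_1+x_2+x_3}$, so the GKS difference reduces to $4\bigl(2^{x_1+x_2+x_3}\cdot qp - rs\bigr)$ where $q, p, r, s$ are the four signed combinations above.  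On the path case $x_3 = 0$ this simplifies to $4(2^{x_1+x_2}-1)(16^{x_1}-1)(16^{x_2}-1)$, a product of three elementary $\SP$ factors; the symmetric case $x_1=x_2=x_3$ likewise admits a straightforward $\SP$ decomposition, so these subcases alone do not furnish hardness.

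The second step is to let the multiplicities $x_i$ themselves be presented by $\SP$ oracles, as in Proposition~\ref{prop:sub-PM}, and to consider either the three-vertex triangle with generic $x_3 > 0$ or a four-vertex graph with several free multiplicities, so that the GKS difference no longer factors into elementary $\SP$ blocks.  The identity $qp - rs = 4ab(c^2-1)$, valid modulo the $2^m$ factor, suggests that the coupling of the exponential $2^{x_1+x_2+x_3}$ with the multilinear products $qp$ and $rs$ produces an expression of the form $f_1 g_1 - f_2 g_2$ in the $\SP$ oracles to which Corollary~2.3.2 of \cite{IP22} applies, yielding the desired collapse.

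The main obstacle is precisely that the Ising correlation $\Cor$ is itself in $\SP$ by Proposition~\ref{p:Ising-Cor-functions}, and Griffiths' inductive argument together with the random-current representation produces partial bijective interpretations of certain GKS-type cancellations; any genuine hardness obstruction must therefore be invisible to these reductions.  Isolating a parameterization that simultaneously resists random-current-style bijections and conforms to the semi-algebraic machinery of \cite{IP22} --- possibly only under the univariate Binomial Basis Conjecture, as in the Ahlswede--Daykin case discussed in~$\S$\ref{ss:sub-Kle} --- is the crux of the argument.
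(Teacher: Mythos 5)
The statement you are trying to prove is labeled as a \emph{conjecture} in the paper, and the paper offers no proof of it: the text around Conjecture~\ref{op:Ising-Cor-GKS} simply states the GKS inequality, poses the conjecture, and moves on to references. So there is no authorial proof to compare against. What can be assessed is whether your proposal is a viable route, and by your own computations it currently is not.

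Your closed forms for the triangle $V=\{v,u,w\}$ with multiplicities $x_1,x_2,x_3$ are correct: with $a=4^{x_1}$, $b=4^{x_2}$, $c=4^{x_3}$, $M=abc$ one indeed gets $\Cor(v,w)=2(M+c-a-b)$, $\sum_S 4^{|\al(S)|}=2(M+a+b+c)$, and the identity $qp-rs=4ab(c^2-1)$ where $q,p,r,s$ are the four signed combinations. But precisely this computation shows the strategy collapses on the cases you have worked out. In the path degeneration $x_3=0$ the GKS difference becomes $4\bigl(2^{x_1+x_2}-1\bigr)\bigl(16^{x_1}-1\bigr)\bigl(16^{x_2}-1\bigr)$, a product of three obviously $\SP$ factors, and the symmetric case likewise factors cleanly --- neither produces a non-binomial-good kernel of the form $(f-g)^2$ or similar that Corollary~2.3.2 of~\cite{IP22} or the Diagonalization Theorem could latch onto. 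Contrast this with the proof of Proposition~\ref{prop:sub-PM}, which succeeds exactly because the $n=2$ specialization of the van der Waerden difference collapses to $2(x_1-x_2)^2$, a genuine binomial-bad form. You have not located the analogous ``hard core'' for GKS, and the factorization structure you uncovered is actually evidence that the naive triangle parameterization may never yield one.

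The last paragraph of your proposal names the real obstacle correctly --- the correlation function itself is in $\SP$ (Proposition~\ref{p:Ising-Cor-functions}), and the random-current representation equips the GKS inequality with partial combinatorial structure that any hardness argument must circumvent. You also correctly anticipate that, as with the Ahlswede--Daykin inequality in~$\S$\ref{ss:sub-Kle}, the best realistic outcome is a BBC-conditional nonmembership statement rather than one conditioned only on a polynomial-hierarchy collapse. But these are acknowledgments of the gap, not a closing of it: the conjecture remains exactly as open after your proposal as before it, and the paper does not resolve it either.
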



We refer to~\cite{GHS70} for an even more curious
\defng{Griffiths--Hurst--Sherman} \defng{\em (GHS)}
\defng{inequality}, and to~\cite{Ell85} for
Statistical Physics context, unified proofs and further references.

\bigskip

\section{Linear extensions} \label{s:LE}

As structures go, linear extension of finite posets occupy the middle
ground between \emph{easy} combinatorial objects such as standard Young tableaux
or spanning trees, and \emph{hard} objects such as $3$-colorings of graphs or
Hamiltonian cycles.\footnote{See $\S$? for a complexity theoretic explanation.
We refer to survey articles \cite{BW00,Tro95} for the notation, standard
background on posets, and further references.}

\smallskip

\subsection{Bj\"orner--Wachs inequality}\label{ss:LE-BW}
Let \ts $P=(X,\prec)$ \ts be a finite poset.
A \defn{linear extension} of $P$ is a bijection \ts $\rho: X \to [n]$,
such that \ts $\rho(x) < \rho(y)$ \ts for all \ts $x \prec y$.
Denote by \ts $\Ec(P)$ \ts the set of linear extensions of~$P$,
and write \. $e(P):=|\Ec(P)|$.

For each element \ts $x \in X$, let \.
$B(x) := \big\{y \in X \. : \. y \succcurlyeq x  \big\}$ \.
be the \defn{upper order ideal} generated by~$x$, and let \. $b(x):=|B(x)|$.
The \defn{Bj\"orner--Wachs inequality} \cite[Thm 6.3]{BW89} states that
$$
e(P) \,\. \prod_{x \in X} b(x) \ \ge  \, n!
$$

\begin{prop}[{\rm \cite[Thm~1.13]{CPP22b}}{}] \label{p:LE-BW}
The Bj\"orner--Wachs inequality is in \ts $\SPr$.
\end{prop}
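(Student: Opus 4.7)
The plan is to realize $f(P) = e(P)\prod_{x\in X} b(x) - n!$ as a difference $|\mathcal{A}| - |\phi(S_n)|$ where $\mathcal{A} := \Ec(P) \times \prod_{x\in X} B(x)$ and $\phi\colon S_n \hookrightarrow \mathcal{A}$ is a polynomial-time computable injection whose image is polynomial-time decidable. Since $|\mathcal{A}| \in \FPr$ (both $e(P)$ and the $b(x)$ are in $\FP$, noting $e(P) \in \FP$ is not needed--- what we need is that $\mathcal{A}$ is a poly-time verifiable set of witnesses, which it clearly is, as checking $\rho \in \Ec(P)$ and $c(x)\in B(x)$ is elementary), the nonnegative function $f(P)$ will then be realized as $|\mathcal{A} \setminus \phi(S_n)|$, with the verifier accepting a pair $(\rho,c)\in\mathcal{A}$ iff it does not lie in the image of $\phi$.

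The injection $\phi$ is the computational shadow of the combinatorial proof of Bj\"orner--Wachs. Given a permutation $\pi \in S_n$, construct $(\rho,c) = \phi(\pi)$ greedily: process $i = 1,2,\ldots,n$ and at step $i$ select the element $x \in X$ whose $\pi$-rank is smallest among currently unlabeled \emph{minimal} elements of $P$; assign $\rho(x) := i$; then set the value $c(x)$ to be a canonically chosen element of $B(x)$ that records the ``rerouting'' of $\pi$-priorities needed so that the map is invertible. (Equivalently, one reads $\pi$ as a sequence of factorial-base ``jumps'' $j_i \in [i,n]$ and absorbs each jump either into the choice of which minimal element to label next or into $c(\rho^{-1}(i))$; the fact that such absorption is always possible is exactly the content of the Bj\"orner--Wachs inequality.) Conversely, given $(\rho,c)$, a left inverse $\psi$ processes elements in the order $\rho^{-1}(1),\ldots,\rho^{-1}(n)$ and reads off the corresponding jumps from $c$, reassembling $\pi$. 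Each individual step of $\phi$ and $\psi$ involves only local comparisons in $P$, so both maps run in time polynomial in $n$.

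Membership in $\phi(S_n)$ is then tested in polynomial time by the round trip: given $(\rho,c)\in\mathcal{A}$, compute $\pi := \psi(\rho,c)$, then recompute $(\rho',c') := \phi(\pi)$, and accept iff $(\rho',c') \ne (\rho,c)$. This verifier runs in polynomial time and its accepting witnesses are exactly the pairs not in $\phi(S_n)$, whose count is $|\mathcal{A}| - n! = f(P)$, placing $f$ in $\SP$. The main obstacle is writing $\phi$ explicitly: one must choose the ``canonical'' element $c(x)$ at each greedy step so that the rerouting is uniquely reversible by $\psi$ and the round-trip check is faithful. Once $\phi$ is pinned down in the right way, polynomial-time computability is routine, as every operation (finding minimal unlabeled elements, selecting the $\pi$-smallest, picking a canonical member of an upper order ideal, comparing pairs) is manifestly polynomial in $|X| = n$.
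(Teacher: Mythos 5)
Your framework is the right one, and it is the same route the paper takes (via \cite{CPP22b}, following Bj\"orner--Wachs \cite{BW89}): realize $e(P)\prod_{x} b(x)$ as the cardinality of the polynomial-time verifiable set $\mathcal{A}=\Ec(P)\times\prod_{x\in X} B(x)$, exhibit a poly-time computable injection $\phi\colon S_n\hookrightarrow\mathcal{A}$ with a poly-time computable left inverse $\psi$, and have the verifier accept a pair $(\rho,c)\in\mathcal{A}$ precisely when the round-trip $\phi(\psi(\rho,c))\ne(\rho,c)$. (One small correction: $e(P)$ is \emph{not} in $\FPr$ --- it is $\SPr$-complete by \cite{BW91} --- so the opening clause of your parenthetical is false; as you then say yourself, what actually matters is only that membership in $\mathcal{A}$ is poly-time decidable, which it plainly is.)

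The genuine gap is that the injection $\phi$ is never constructed. The greedy rule you give produces $\rho$ from $\pi$, but by itself it is many-to-one; the entire content of the argument sits in the choice of the ``correction'' $c(x)\in B(x)$, and that choice is left as ``canonically chosen'' and ``absorbed,'' with the honest admission that pinning it down is ``the main obstacle.'' Worse, the sentence asserting that the possibility of this absorption ``is exactly the content of the Bj\"orner--Wachs inequality'' has the logic backwards: the numerical inequality $e(P)\prod b(x)\ge n!$ is a \emph{consequence} of such an injection existing; it tells you nothing about how to build one, and a fortiori nothing about whether a poly-time reversible one exists --- which is precisely the extra content needed for membership in $\SPr$. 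What the paper (following the original Bj\"orner--Wachs argument) actually does is write out an explicit sorting map that, step by step, converts $\pi$ into a linear extension while recording for each placed element $x$ a specific witness in $B(x)$ for the adjustment made, and then verifies concretely that both this map and its inverse run in polynomial time. Once such a $\phi$ is in hand, your reduction to a round-trip verifier is correct and complete; as written, though, the proposal substitutes for the proof the assertion that a proof exists.
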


The proof in \cite[$\S$3]{CPP22b} is essentially the same as the original
combinatorial proof by Bj\"orner and Wachs.  This is in contrast with a probabilistic
proof in \cite[$\S$4]{CPP22b} via (another) Shepp's inequality~\cite{She80}
which in turn uses the FKG inequality. Similarly, this is in contrast with
the Hammett--Pittel analytic proof~\cite{HP}, and Reiner's \defng{$q$-analogue} \ts
based proof given in \cite[$\S$5]{CPP22b}.  Neither of these three proofs seem
to imply the proposition.

\smallskip

\subsection{Sidorenko's inequality}\label{ss:LE-Sid}
A \emph{chain} in a poset \ts $P=(X,\prec)$ \ts is a subset \ts
$\{x_1,\ldots,x_\ell\} \subseteq X$, such
that \ts $x_1 \prec x_2 \prec \ldots \prec x_\ell\ts.$
Denote by \ts $\cC(P)$ \ts the set of chains in~$P$.

Suppose \ts $P=(X,\prec)$ \ts and \ts $Q=(X,\prec')$ \ts be two posets
on the same set with \. $|X|=n$ \. elements, such that \.
$\bigl|C \cap C'\bigr| \le 1$ \. for all \ts $C\in \cC(P)$ \ts and \ts $C' \in \cC(Q)$.
\defn{Sidorenko's inequality} states that \, $e(P) \ts e(Q) \ge \. n!$ \. \cite{Sid}.

\begin{prop}[{\rm \cite[Thm~1.15]{CPP22b} and \cite[$\S$3]{GG20b}}{}]\label{p:LE-Sid}
Sidorenko's inequality is in \ts $\SPr$.
\end{prop}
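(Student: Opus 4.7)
The plan is to make algorithmic the injective proof of Sidorenko's inequality, exactly in the spirit of Proposition~\ref{p:LE-BW}. Concretely, the goal is to exhibit a map \ts $\phi : \Sch_X \hookrightarrow \Ec(P)\times\Ec(Q)$ \ts (where $\Sch_X$ is the set of bijections $X\to[n]$, so \ts $|\Sch_X|=n!$) which is an injection, is computable in polynomial time, and whose image is recognizable in polynomial time. Granted such \ts $\phi$, the $\SP$ verifier for \ts $e(P)\ts e(Q) - n!$ \ts takes as witness a pair \ts $(\rho_P,\rho_Q)$ \ts of linear extensions and accepts iff \ts $\rho_P\in \Ec(P)$, \ts $\rho_Q\in \Ec(Q)$, and \ts $(\rho_P,\rho_Q)\notin \phi(\Sch_X)$. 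Each check is polynomial, and by injectivity the accepting witnesses are exactly the $e(P)\ts e(Q)-n!$ \ts complementary pairs, giving membership in \ts $\SPr$.

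For \ts $\phi$, I would follow the Gaetz--Gao argument cited in the statement (alternatively, Sidorenko's original chain-cover proof made algorithmic as in~\cite{CPP22b}). Given \ts $\sigma\in \Sch_X$, define \ts $\phi(\sigma) = (\rho_P(\sigma),\rho_Q(\sigma))$ \ts by the greedy ``$P$-sort'' and ``$Q$-sort'' of $\sigma$: at each step, scan $\sigma$ from left to right and extract the first element which is minimal in the remaining set with respect to~$P$ (resp.~$Q$), recording the extraction order as \ts $\rho_P(\sigma)$ \ts (resp.~$\rho_Q(\sigma)$). Each sort clearly runs in \ts $O(n^2)$ \ts time. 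The chain-intersection hypothesis \ts $|C\cap C'|\le 1$ \ts for \ts $C\in \cC(P)$, $C'\in \cC(Q)$ \ts is precisely what is needed for injectivity: if two distinct \ts $\sigma_1,\sigma_2$ \ts produced the same pair, one traces their first discrepancy to a pair of elements $x,y$ incomparable in both $P$ and $Q$, yet the sorts' agreement would force $x,y$ to lie in a common chain of each poset, a contradiction.

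For image recognition, run the inverse of the greedy sort. At each step, look at the first element of \ts $\rho_P$ \ts and the first element of \ts $\rho_Q$; under \ts $\phi$, both sorts must have extracted a common earliest minimal element from~$\sigma$, and this element together with its position in~$\sigma$ can be read off from \ts $(\rho_P,\rho_Q)$ \ts in polynomial time. Iterating reconstructs a candidate \ts $\sigma$ \ts (or fails), and then one re-runs \ts $\phi$ \ts on the candidate and compares outputs; if they match, \ts $(\rho_P,\rho_Q)\in \phi(\Sch_X)$, otherwise not. This yields the polynomial-time image-membership test.

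The main obstacle is this last step: one must verify carefully that the inverse procedure is deterministic and polynomial, i.e.\ that at each stage the next element of the hypothetical \ts $\sigma$ \ts is uniquely and efficiently determined by \ts $(\rho_P,\rho_Q)$. This is really the content of the Gaetz--Gao argument --- the same structural observation that forces injectivity also makes the inverse well-defined. Modulo that verification (which is where the \ts $|C\cap C'|\le 1$ \ts hypothesis does the work), the \ts $\SP$ \ts certificate drops out immediately, paralleling the passage from the Bj\"orner--Wachs proof to Proposition~\ref{p:LE-BW}.
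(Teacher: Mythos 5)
Your high-level framework is the correct one: exhibit an explicit injection \ts $\phi : S_n \hookrightarrow \Ec(P)\times\Ec(Q)$ \ts that is poly-time computable with poly-time recognizable image, and let the witness be a pair of linear extensions outside the image. The problem is that the specific map you define is not injective, and the chain-intersection hypothesis does not save it. Take \ts $X=\{1,2,3\}$, let \ts $P$ \ts have the single relation \ts $1\prec 2$ \ts (with $3$ incomparable to everything), and let \ts $Q$ \ts have the single relation \ts $1\prec' 3$ \ts (with $2$ incomparable to everything). Every $2$-element subset is a chain in at most one of the two posets, so \ts $|C\cap C'|\le 1$ \ts holds. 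Now run your two greedy sorts on \ts $\sigma_1=(2,3,1)$ \ts and \ts $\sigma_2=(3,2,1)$. In both cases the $P$-sort (scan for a $P$-minimal element) extracts \ts $3,1,2$ \ts and the $Q$-sort extracts \ts $2,1,3$, so \ts $\phi(\sigma_1)=\phi(\sigma_2)=\bigl((3,1,2),(2,1,3)\bigr)$. The relative order of \ts $2$ \ts and \ts $3$ \ts in \ts $\sigma$ \ts has been erased: since $2$ is non-minimal in~$P$ and $3$ is non-minimal in~$Q$, each sort ``jumps past'' one of them, and neither records which of the two came first.

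This is not a missing detail but a failure of the key lemma you appeal to. Your sketch of injectivity says the first discrepancy would ``force $x,y$ to lie in a common chain of each poset,'' but the hypothesis \ts $|C\cap C'|\le 1$ \ts only forbids a pair being comparable in \emph{both} posets; it does nothing to a pair that is incomparable in both, which is exactly what $\{2,3\}$ is here. Since the inverse procedure and the image-recognition step are built on injectivity, they collapse with it. Note also that the paper itself warns (in $\S$\ref{ss:LE-Sid}) that Sidorenko's original proof, the Gaetz--Gao surjection of \cite{GG20a}, and the Bollob\'as--Brightwell--Sidorenko geometric proof do \emph{not} directly imply the proposition; the working argument in \cite{CPP22b} and \cite{GG20b} is a more delicate, genuinely interleaved construction, not two independent stable sorts. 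Your write-up needs that construction made explicit, not merely asserted.
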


Natural examples of posets \ts $(P,Q)$ \ts as above,
are the \defn{permutation posets} \ts $\bigl(P_\si, P_{\ov \si}\bigr)$,
where \ts $P_\si=([n],\prec)$ \ts is defined as \.
$i\prec j$ \. if and only if \.  $i<j$ \ts and \ts $\si(i)<\si(j)$ \ts for all \ts $i,j\in [n]$,
and \.  $\ov{\si}:=\bigl(\si(n),\ldots,\si(1)\bigr)$.
In this case \ts $P_{\si}$ \ts is a \defng{$2$-dimensional poset}, and
\ts $P_{\ov \si}$ \ts is its \defng{plane dual}.

The original proof by Sidorenko was based on combinatorial optimization.
Other proofs include and earlier direct surjection by Gaetz and Gao \cite{GG20a},
and the geometric proof by Bollob\'as, Brightwell and Sidorenko \cite{BBS99},
via Stanley's theorem on \defng{poset polytopes\ts}~\cite{Sta86} and Saint-Raymond's
proof of \defng{Mahler's conjecture} for convex corners \cite{StR81}.
Neither of these three proofs imply the proposition, at least not directly.

\smallskip

\subsection{Stanley's inequality}\label{ss:LE-Stanley}
Let \ts $P=(X,\prec)$ \ts be a finite poset.
For an element \ts $x\in X$ \ts and integer \ts $k \in [n]$, denote by
\ts $\aN(k)=\aN(P,x,k)$ \ts the number of linear extensions \ts $\rho\in \Ec(P)$,
such that \ts $\rho(x)=k$.  \defn{Stanley's inequality} \cite[Thm~3.1]{Sta81} states that:
$$\aN(k)^2 \,\. \ge \,\. \aN(k-1) \,\. \aN(k+1) \quad \text{for all} \ \ 1< k<n\ts.
$$

\begin{conj}\label{conj:LE-Stanley}
Stanley's inequality is not in \ts $\SPr$.
\end{conj}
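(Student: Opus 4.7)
The plan is to apply the framework from \cite{IP22} (developed in Section~\ref{s:no}) for ruling out $\SPr$ membership via complexity-theoretic collapse, in the spirit of Proposition~\ref{prop:sub-PM}. The mechanism rests on an elementary observation: since \ts $\aN \in \SPr$, the products \ts $\aN(k)^2$ \ts and \ts $\aN(k-1) \ts \aN(k+1)$ \ts are both in \ts $\SPr$, and if their difference \ts $f$ \ts is also in \ts $\SPr$, then the decision problem \ts $\bigl\{(P, x, k) : f(P, x, k) \ge 1\bigr\} \in \NP$ \ts via the $\SPr$-witness. Consequently, $\coNP$-hardness of this strict-inequality problem would force \ts $\coNP \subseteq \NP$, and hence \ts $\PHr = \NP$, contradicting standard complexity assumptions.

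The first step is to exploit the characterization of the equality cases of Stanley's inequality, obtained geometrically by Shenfeld--van Handel continuing Stanley's Alexandrov--Fenchel approach, and combinatorially by Chan--Pak--Panova. Equality \ts $\aN(k)^2 = \aN(k-1)\ts\aN(k+1)$ \ts turns out to be very restrictive, equivalent to rigid ``critical chain'' structures on those linear extensions with \ts $\rho(x)$ \ts near~$k$, or equivalent affine-hull conditions on the associated slices of the order polytope. These conditions are intrinsically combinatorial and amenable to gadget constructions.

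The second and main step is to exhibit a Karp reduction from a $\coNP$-complete problem (for instance \textsc{Tautology} or complement of graph $3$-colorability) into the problem \ts $\bigl\{(P, x, k) : f(P, x, k) \ge 1\bigr\}$. Given an instance of the hard problem, one constructs a poset gadget \ts $P$ \ts with distinguished element \ts $x$ \ts and integer \ts $k$ \ts such that Stanley's inequality is strict precisely when the instance is a YES-instance. This is where the principal obstacle lies: unlike the two-vertex multigraph in Proposition~\ref{prop:sub-PM}, which reduces everything to the transparent identity \ts $2(x_1 - x_2)^2 \ge 0$, Stanley's inequality is genuinely higher-order, and its equality conditions have no comparably simple algebraic signature. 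The gadget must translate Boolean structure into delicate combinatorial-geometric conditions on linear extensions, and it is unclear whether sufficiently flexible gadgets exist to encode an entire $\coNP$-hard problem rather than just isolated equality cases.

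Assuming the reduction in Step~2 succeeds, the conclusion is immediate from Step~3: $f \in \SPr$ places \ts $\{f \ge 1\} \in \NP$, while the reduction makes \ts $\{f \ge 1\}$ \ts $\coNP$-hard, yielding the collapse \ts $\PHr = \NP$. Thus \ts $f \notin \SPr$ \ts conditionally on \ts $\PH \ne \NP$, in the same qualitative sense as the other non-membership results in the paper. An indirect route worth pursuing is the \defng{$q$-analogue} approach of \cite{CPP22b}, where the $q$-deformation of the equality case appears more tractable and might yield a cleaner hardness reduction, at the cost of working with $q$-enumerative rather than enumerative coefficients.
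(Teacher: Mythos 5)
First, be clear about the status of the statement: in the paper this is a \emph{conjecture} (Conjecture~\ref{conj:LE-Stanley}), which the authors explicitly state ``remains elusive'' after considerable effort. There is no proof in the paper for you to match; your proposal has to stand on its own as a plan of attack, and you correctly flag that it is conditional on a reduction in Step~2.

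That reduction, however, is not merely an obstacle --- it is provably impossible. You want to show that \ts $\bigl\{(P,x,k) : f(P,x,k)\ge 1\bigr\}$\ts, where \ts $f=\aN(k)^2 - \aN(k-1)\,\aN(k+1)$, is $\coNP$-hard. But the paper itself cites the Shenfeld--van Handel result \cite{SvH20}: the \emph{equality} problem \ts $\bigl\{\aN(k)^2 =^? \aN(k-1)\,\aN(k+1)\bigr\}$ \ts is decidable in polynomial time. That set is exactly the complement of yours, so \ts $\{f\ge 1\}\in \poly$. A language in $\poly$ cannot be $\coNP$-hard unless $\poly=\coNP$, so no Karp reduction of the kind you envision can exist under any standard assumption. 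Your Step~1 observation ($f\in\SPr$ implies $\{f\ge 1\}\in\NP$) is true but toothless here: many functions in $\SPr$, including {\sc \#LinearExtensions} and {\sc \#PerfectMatchings} themselves, have poly-time vanishing problems. Indeed the paper cites the SvH characterization as mild \emph{positive} evidence that Stanley's inequality might be in $\SPr$, which cuts against the conjecture you are trying to prove.

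The framework of \cite{IP22} that you invoke does work at a different level than hardness of the support. As sketched in $\S$\ref{ss:no-weaker} and $\S$\ref{ss:no-semi-alg}, the mechanism is to express the positivity proof as a constrained polynomial system, show that the relevant polynomial (e.g.\ $(x-y)^2$, or Alexandrov--Fenchel restricted to a semialgebraic set) is not binomial-good, and then substitute \emph{independent counting-complete} $\SPr$ functions to force a collapse such as $\Sigma_2^{\textsc{p}}=\PHr$ or $\UP=\coUP$. For Proposition~\ref{prop:sub-PM} the substitution is essentially free because the van der Waerden bound collapses on $2\times 2$ tables to $2(x_1-x_2)^2$; for Stanley's inequality no comparably simple reduction of the Alexandrov--Fenchel inequality to a small polynomial system is known, and the authors report in $\S$\ref{ss:LE-Stanley} that all their attempts (combinatorial atlas, width-two posets, $q$-analogues) have failed to produce one. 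The conjecture is genuinely open, and the missing ingredient is precisely that reduction --- not the $\coNP$-hardness route you propose, which the SvH theorem closes off.
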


In the past few years, we made a considerable effort trying to resolve this problem.
The original proof in~\cite{Sta81} is an easy but ingenuous reduction from
the \defng{Alexandrov--Fenchel inequality}, that was used previously to prove
the van def Waerden conjecture (see~$\S$\ref{ss:sub-PM}).
Since poset polytopes are rather specialized,
initially we hoped that Stanley's inequality is in~$\SP$.  Another
positive evidence is the recent \defng{effective} characterization
of the equality conditions in Stanley's inequality, by Shenfeld and van~Handel \cite{SvH20}.
In our terminology, they showed that the vanishing problem \ts
$\big\{\aN(k)^2 =^?\aN(k-1) \. \aN(k+1)\big\}$ \ts can be decided
in poly-time.

With Chan~\cite{CP21}, we develop a new \defng{combinatorial atlas} technology
(see also~\cite{CP22}), which gave a purely linear algebraic proof of Stanley's
inequality and its generalization to weighted linear extensions (see also \cite{CP22+}).
This also allowed us to give a new proof of the equality conditions.  Unfortunately,
the limit argument in our proof does not allow to give a $\SP$ description
(see \cite[$\S$17.17]{CP21}).
Separately, with Chan and Panova, we employed several combinatorial approaches
in \cite{CPP21a,CPP22a} to posets of width two, and an algebraic approach in
\cite[$\S$7 and $\S$9.11]{CPP22b}.  Unfortunately, Conjecture~\ref{conj:LE-Stanley}
remains elusive.

\smallskip

\subsection{XYZ inequality}\label{ss:LE-XYZ}
Let \ts $P=(X,\prec)$ \ts be a finite poset, and let \ts $x,y,z\in X$ \ts
be incomparable elements.  Let \.
$P_{xy}:= P\cup \{x\prec y\}$, \. $P_{xz}:= P\cup \{x\prec z\}$ \. and \. $P_{xyz}:= P\cup \{x\prec y, x\prec z\}$.
Shepp's classic \defn{XYZ inequality}~\cite{She82},
states that
$$
e(P\bigr) \, e(P_{xyz}) \ \ge \
e(P_{xy}\bigr) \, e(P_{xz}).
$$
As with other correlation inequalities, the XYZ inequality is easier to understand in terms
of uniform random linear extensions \ts $\rho \in \Ec(P)$:
$$
\pp[\ts \rho(x)< \rho(y) \, | \, \rho(x)< \rho(z) \ts ] \, \ge \,  \pp[\ts \rho(x)< \rho(y) \ts ]\ts.
$$


\begin{conj}\label{conj:LE-XYZ}
The XYZ inequality is not in \ts $\SPr$.
\end{conj}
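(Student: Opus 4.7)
The plan is to mirror the strategy of Proposition~\ref{prop:sub-PM}: construct a parametric family of posets whose XYZ discrepancy reduces to a squared difference $(x_1-x_2)^2$ of values of $\SPr$ functions, and then invoke Corollary~2.3.2 of~\cite{IP22} to conclude that the hypothetical $\SPr$-membership would force $\PHr=\Sigma_2^{\textsc{p}}$.

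Concretely, I would look for gadget posets $P=P(x_1,x_2)$ with distinguished incomparable elements $x,y,z$ and an involutive symmetry exchanging $x_1\leftrightarrow x_2$ together with $y\leftrightarrow z$. Such a symmetry forces the XYZ difference
$$D(x_1,x_2) \, := \, e(P)\ts e(P_{xyz}) \. - \. e(P_{xy})\ts e(P_{xz})$$
to be symmetric in $x_1,x_2$ and to vanish on the diagonal $x_1=x_2$. Since $D\ge 0$ by Shepp's theorem, divisibility of $D$ by $(x_1-x_2)$ in fact forces divisibility by $(x_1-x_2)^2$, and a careful choice of the gadget should make the quotient a positive constant. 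A natural first candidate is a small ``core'' poset containing $x,y,z$ in a symmetric arrangement, together with two antichains of sizes $x_1$ and $x_2$ attached in mirrored positions, so that $e(P),\. e(P_{xy}),\. e(P_{xz}),\. e(P_{xyz})$ reduce to an explicit binomial coefficient in $x_1,x_2$ times a combinatorial constant depending only on the core.

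The main obstacle will be verifying the quadratic factorization and its positivity for a sufficiently small gadget. Unlike the clean $\PM$ computation in Proposition~\ref{prop:sub-PM}, linear extension counts involve factorials and multinomials, so one must check by direct expansion that all cancellations leave exactly a positive constant multiple of $(x_1-x_2)^2$ --- rather than a higher-degree polynomial, or one whose leading coefficient depends on the core in a way that obstructs the final reduction. Once a suitable gadget is fixed, the remaining step is routine: feed $x_1,x_2$ as outputs of $\SPr$ functions exactly as in the proof of Proposition~\ref{prop:sub-PM}, and apply Corollary~2.3.2 of~\cite{IP22} to finish.

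If no symmetric gadget produces the desired factorization, a secondary strategy is to work with the covariance form of XYZ,
$$\frac{D(x_1,x_2)}{e(P)^2} \ = \ \pp\bigl[\rho(x)<\rho(y),\ts \rho(x)<\rho(z)\bigr] \. - \. \pp\bigl[\rho(x)<\rho(y)\bigr]\.\pp\bigl[\rho(x)<\rho(z)\bigr],$$
and engineer families of posets in which this covariance encodes a computationally hard bilinear quantity. The $\SPr$-completeness of counting linear extensions (Brightwell--Winkler) makes such encodings plausible, and the semialgebraic obstruction framework developed in~\cite{IP22} (cf.~$\S$\ref{ss:no-semi-alg}) is tailored to rule out $\SPr$-membership for exactly this sort of quadratic expression, in close parallel with the proposed non-interpretation of Stanley's inequality (Conjecture~\ref{conj:LE-Stanley}).
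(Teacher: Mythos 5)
This statement is a conjecture that the paper leaves open: the text following it explicitly records that neither the FKG-based proof nor the Brightwell--Trotter double-counting argument resolves the question in either direction, and no proof is offered. So your proposal cannot be measured against a proof in the paper; it has to stand on its own, and as written it has a fundamental gap.

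The gap is in the final ``routine'' step. In Proposition~\ref{prop:sub-PM} the reduction to \ts $(x_1-x_2)^2$ \ts works because the \emph{hypothesis} of that proposition places the edge multiplicities themselves in the hands of arbitrary \ts $\SPr$ \ts black boxes: the verifier is oblivious to $x_1,x_2$, and Corollary~2.3.2 of \cite{IP22} applies to independent \emph{counting-complete} functions substituted for them. For the XYZ inequality the input is an explicit poset. If you attach antichains of sizes $x_1$ and $x_2$ to a fixed core, those sizes are part of the unary input, so a verifier may simply compute $x_1$, $x_2$, and hence $D(x_1,x_2)$ in polynomial time; your gadget family would then lie in \ts $\FPr\subseteq\SPr$, proving nothing. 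To import the \cite{IP22} machinery you would need the two quantities being squared-and-differenced to arise as values of independent counting-complete \ts $\SPr$ \ts functions encoded \emph{inside} the poset, e.g.\ as linear extension counts of subposets. But, as the paper notes in $\S$\ref{ss:no-weaker}, {\sc \#LinearExtensions} is \emph{not} counting-complete (its vanishing problem is trivial since $e(P)\ge 1$), so Corollary~2.3.2 does not apply to such substitutions; this is precisely why the parallel Conjecture~\ref{conj:LE-Stanley} also remains open despite considerable effort. A secondary obstacle is that for a core with two attached antichains the four linear extension counts are multinomial expressions whose degrees grow with $x_1+x_2$, so there is no reason for $D$ to factor as a constant times $(x_1-x_2)^2$ rather than a polynomial of much higher degree. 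Your covariance-form fallback is closer to the program of $\S$\ref{ss:no-semi-alg}, but it is a research direction, not an argument.
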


\smallskip

The original proof (see also \cite[$\S$6.4]{AS16}), uses the FKG inequality,
which, like the AD inequality, is not in $\SP$ in full generality.
A double counting argument
proving the XYZ inequality is given in~\cite{BT02}.  Unfortunately,
it does not prove that the inequality is in~$\SP$, both because of the
double counting argument and the use of \textsc{BipartiteUnbalance} problem,
see \cite[$\S$9.2]{IP22}.  This makes the conjecture especially interesting.

\bigskip

\section{Young tableaux}\label{s:tab}
We adopt the standard and largely self-explanatory notation from
Algebraic Combinatorics.  We refer to \cite{Sag92,Sta99} for both notation
and the background.  Unless stated otherwise, we use unary encoding for
all partition and integer parameters throughout this section.  For convenience,
we are using notation \ts $f\le_\# g$ \ts to mean that \ts $(g-f)\in \SP$.

\smallskip

\subsection{Standard Young tableaux}\label{ss:tab-SYT}
Let \ts $f^\la=|\SYT(\la)|$ \ts be the number of \defng{standard Young tableaux}
of shape $\la\vdash n$.  Recall the \defn{hook-length formula}:
\begin{equation}\label{eq:HLF} \tag{HLF}
f^{\lambda} \, = \, n!\, \prod_{(i,j)\in \lambda}\, \frac{1}{h_\la(i,j)} \,,
\end{equation}
where \ts $h_\la(i,j)=\la_i+\la'_j-i-j+1$ \ts is the \defn{hook-length} in~$\la$.
This implies that \ts $f^\la\in \FP$.  Consequently, we have that \ts
$n!/f^\la\in \FP$ \ts and \ts $(f^\la)^2 \le_\# n!$

Similarly, let \ts $f^{\la/\mu}=|\SYT(\la/\mu)|$ \ts
the number of standard Young tableaux of skew shape $\la/\mu$,
where $|\la/\mu|=n$.  Recall the \defn{Aitken--Feit
determinant formula}:
\begin{equation}\label{eq:AFDF} \tag{AFDF}
f^{\la/\mu} \, =  \, n!\.
\det \left( \frac{1}{(\la_i-\mu_j -i+j)!}\right)_{i,j=1}^{\ell(\la)}\,.
\end{equation}
which proves that \ts $f^\la\in \FP$.  Consequently, we have that
\ts $f^{\la/\mu}\le_\# n!$

Now, it follows from the \defn{Naruse hook-length formula} (NHLF) that
\begin{equation}\label{eq:NHLF} \tag{$\otimes$}
f^{\lambda/\mu} \, \prod_{(i,j)\in \lambda/\mu} h_\la(i,j) \
\ge \ n! \,,
\end{equation}
see \cite{MPP18b}.  Note that this is a much sharper bound than the
one given by the Bj\"orner--Wachs inequality (see~$\S$\ref{ss:LE-BW}).

With Morales and Panova, we gave several proofs of the NHLF and its
generalizations, both algebraic and combinatorial \cite{MPP17,MPP18a}.
A recursive proof is given by Konvalinka \cite{Kon20}.
Finding a direct combinatorial proof which allows efficient sampling from
$\SYT(\la/\mu)$, perhaps generalizing the
\defng{NPS bijection} or the \defng{GNW hook walk},
remains an important open problem, see e.g.\ \cite[$\S$5.6]{HMPSW}.

\begin{conj}\label{conj:NHLF-lower-SP}
Inequality~\eqref{eq:NHLF} is in~$\SPr$.
\end{conj}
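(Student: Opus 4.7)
My plan is to prove the strictly stronger statement that $g\in \FP$, where $g(\lambda,\mu):=f^{\lambda/\mu} \prod_{(i,j)\in \lambda/\mu} h_\lambda(i,j) - n!$, and then to invoke the inclusion $\FP\subseteq \SP$ from $\S$\ref{ss:basic-classes}. The three ingredients are: (i) polynomial-time computability of $f^{\lambda/\mu}$ via \eqref{eq:AFDF}; (ii) trivial polynomial-time computability of $\prod h_\lambda$ and of $n!$; and (iii) the fact that $g\geq 0$ as an integer, which is exactly the content of \eqref{eq:NHLF}.

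For (i), under the unary convention of this section we have $\ell(\lambda)\leq n$, so \eqref{eq:AFDF} presents $f^{\lambda/\mu}$ as $n!$ times the determinant of an $\ell(\lambda)\times \ell(\lambda)$ rational matrix whose entries $1/(\lambda_i-\mu_j-i+j)!$ have $O(n\log n)$ bits. Exact fraction-free Gaussian elimination (equivalently, Bareiss after clearing denominators by the $\mathrm{lcm}$ of the factorials appearing) then returns the determinant in time polynomial in $n$, and multiplying by $n!$ produces the integer $f^{\lambda/\mu}$. Point (ii) is immediate: the hook product has at most $n$ positive factors each bounded by $2n$, and $n!$ has $O(n\log n)$ bits. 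Point (iii) is the Morales--Pak--Panova theorem of \cite{MPP18b}. Combining the three, $g$ is a nonnegative integer-valued function computable in polynomial time, hence $g\in \FP\subseteq \SP$.

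The main obstacle is less mathematical than conceptual: the argument above is an $\FP$-level verdict that exhibits no combinatorial witness set for $g(\lambda,\mu)$, in sharp contrast to, say, the ballot-sequence interpretation of Catalan numbers. Starting directly from \eqref{eq:NHLF}, the natural candidate for such a witness would be the truncated sum over non-initial excited diagrams $D\neq \mu$ of $\mu$ in $\lambda$, each weighted by $\prod_{(i,j)\in (\lambda/\mu)\cap D} h_\lambda(i,j)\cdot \prod_{(i,j)\in \mu\setminus D} 1/h_\lambda(i,j)$; however, the individual weights are a priori only rational. Promoting them to a manifestly integer-positive formula -- via the pleasant-diagram reformulation of \cite{MPP17}, an $\mathrm{RPP}$-valued variant, or an $\mathrm{NPS}$-style branching compatible with arbitrary skew shapes -- would deliver the combinatorially meaningful interpretation gestured at in the paragraph preceding the conjecture, and this is precisely the open problem about efficient sampling from $\SYT(\lambda/\mu)$ cited there.
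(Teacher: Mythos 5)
The first thing to note is that the paper does not prove this statement: it is posed as a \emph{conjecture}, i.e.\ left open, so there is no proof of record to compare yours against --- and that fact is itself the central problem with your proposal. Your formal chain is sound under the paper's literal definitions: $f^{\la/\mu}\in\FP$ via \eqref{eq:AFDF} (the paper asserts exactly this two paragraphs before the conjecture), the hook product and $n!$ are trivially computable with polynomially many bits, the difference is a nonnegative integer by \cite{MPP18b}, and $\FP\subseteq\SP$ by the witness construction of $\S$\ref{ss:basic-classes}. Indeed this is precisely the argument the paper itself deploys for Grimmett's inequality in $\S$\ref{ss:Main-examples}$(3)$, for $q$-binomial unimodality in $\S$\ref{ss:seq-unimod}, and for $p(n)^2-p(n-1)\,p(n+1)$ in $\S$\ref{ss:seq-part}. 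So if the conjecture meant only what it literally says, it would not be a conjecture: every ingredient of your two-line argument is stated, in substance, on the same page of the survey.

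The conclusion to draw is that your proof does not engage with what is actually being asked. The sentence immediately preceding the conjecture (``finding a direct combinatorial proof which allows efficient sampling from $\SYT(\la/\mu)$ \dots remains an important open problem'') and the remark immediately following it (on why even $(\otimes\otimes)$ resists a direct proof, and on Karamata's inequality not being in $\SP$ in full generality) make clear that the intended content is an \emph{oblivious} combinatorial interpretation in the sense of $\S$\ref{ss:no-first}--$\S$\ref{ss:no-semi-alg}: a verifier built from the combinatorial interpretations of the constituents --- in effect an explicit injection from $S_n$ into $\SYT(\la/\mu)$ paired with hook data, generalizing the NPS bijection or the GNW hook walk --- rather than a verifier that computes $f^{\la/\mu}$ as a determinant and then accepts integers below the answer. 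Your own closing paragraph concedes exactly this and correctly identifies the genuine obstruction (the individual excited-diagram weights are only rational, so the natural candidate witness set does not exist term by term). In short, what you have is an observation the author already made plus an accurate description of the open problem, not a resolution of it. If you keep the $\FP$ argument, present it as showing that the conjecture faces no obstruction of the kind developed in Section~\ref{s:no} --- not as a proof.
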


{\small
\begin{rem}
Even the simplest special cases of $(\otimes)$ are hard to establish
directly.  For example, rotate diagram~$\la$ by 180$^\circ$ and denote by
$\la^\ast$ be the resulting skew shape.  Let \ts $h_\la^\ast(i,j) := i+j-1$
\ts be the hook-lengths in~$\la^\ast$.  Inequality~$(\otimes)$ in this case
follows from
$$(\otimes\otimes) \qquad
\prod_{(i,j)\in \lambda^\ast} h_{(a^b)}(i,j) \ = \
\prod_{(i,j)\in \lambda} h^\ast_\la(i,j) \ \ge \
\prod_{(i,j)\in \lambda} h_\la(i,j)\,,
$$
where \ts $a=\la_1$ \ts and \ts $b=\ell(\la)$, see \cite[Prop.~12.1]{MPP18b}.
A direct combinatorial proof of $(\otimes\otimes)$ is given in~\cite{PPS20}.
This proof uses \defng{Karamata's inequality} \ts that is not
in $\SP$ in full generality, see~\cite[$\S$7.5]{IP22}.
\end{rem}
}

\smallskip

\subsection{Semistandard Young tableaux}\label{ss:tab-SSYT}
Let \ts $K_{\la\ts \mu} = |\SSYT(\la,\mu)|$ \ts be the \defn{Kostka numbers},
where \ts $\la,\mu\vdash n$.  The inequality \ts
$K_{\la\mu}\le_\# K_{\la \ts 1^n} = f^\la$  \ts is easy to show directly.
Much more interesting is the following generalization.

Let \ts $\ba =(a_1,\ldots,a_\ell)$ \ts and \ts $\bb=(b_1,\ldots,b_\ell)$ \ts
be two weakly decreasing vectors.  We say that
$\ba$ \defn{majorizes} $\bb$, write \ts $\ba \trianglerighteq \bb$,
if \. $a_1+\ldots +a_i \ge b_1+\ldots +b_i$ \. for all $1\le i < \ell$, and  \.
$a_1+\ldots +a_\ell = b_1+\ldots +b_\ell$\ts.  Recall that \ts
$K_{\la\mu} \le K_{\la\nu}$ \ts for all \ts $\mu \trianglerighteq \nu$.

\begin{prop} \label{prop:Kostka-White}
$K_{\la\mu} \le_\# K_{\la\nu}$ \. for all \ts $\mu \trianglerighteq \nu$.
\end{prop}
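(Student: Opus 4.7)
The plan is to exhibit a polynomial-time computable injection $\Phi: \SSYT(\la, \mu) \hookrightarrow \SSYT(\la, \nu)$ whose image can be decided in polynomial time. Then $\SSYT(\la,\nu) \setminus \Phi(\SSYT(\la,\mu))$ serves as the $\SP$ witness set for $K_{\la\nu} - K_{\la\mu}$, and the verifier accepts a candidate $T$ iff it is a valid element of this complement.

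First, I would decompose the dominance $\mu \trianglerighteq \nu$ into an explicit chain of partitions
\[
\mu \, = \, \eta_0 \, \triangleright \, \eta_1 \, \triangleright \, \cdots \, \triangleright \, \eta_r \, = \, \nu,
\]
where each elementary step has the form $\eta_{t+1} = \eta_t - e_i + e_j$ for indices $i<j$ with $\eta_t(i) > \eta_t(i+1) = \eta_t(i+2) = \cdots = \eta_t(j) > \eta_t(j+1)$. This is a standard reduction (an easy induction on $\mu-\nu$), the chain has length $r = O(n)$, and it can be constructed in polynomial time. It suffices to handle a single elementary step and compose.

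For an elementary step $\eta \to \eta' = \eta - e_i + e_j$, I would build the injection as a composition of $j-i$ crystal lowering operators $\tilde f_i, \tilde f_{i+1}, \ldots, \tilde f_{j-1}$ (in combinatorial terms, Lascoux--Sch\"utzenberger bracket matching on pairs of consecutive letters). Their successive application carries a tableau of weight $\eta$ through intermediate compositions $\eta-e_i+e_{i+1}, \, \eta-e_i+e_{i+2}, \, \ldots, \, \eta-e_i+e_j = \eta'$, at each stage changing one letter $k$ into a letter $k+1$. Each $\tilde f_k$ is injective on the set where it is nonzero, so I need to check it is defined throughout: after applying $\tilde f_i, \ldots, \tilde f_{k-1}$, the current weight of letter $k$ is $\eta(k)+1$ and the current weight of letter $k+1$ is $\eta(k+1) = \eta(k)$ (using the plateau condition), so the difference is $1 > 0$, which guarantees an unmatched $k$ in the bracket matching and hence that $\tilde f_k$ is defined on every tableau of this intermediate weight. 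Composing over all $r$ elementary steps gives $\Phi$, computable in $\mathrm{poly}(n)$ time.

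For the $\SP$ verifier I would run the construction in reverse: given $T \in \SSYT(\la, \nu)$, apply the inverse crystal operators $\tilde e_{j-1}, \tilde e_{j-2}, \ldots$ through all $r$ elementary steps in reverse order. At each stage either the relevant $\tilde e_k$ is defined and produces the preimage tableau at the previous weight, or it is not, in which case $T \notin \Phi(\SSYT(\la,\mu))$ and the verifier rejects. Since each $\tilde e_k$ is polynomial-time computable and inverts $\tilde f_k$ on the image, this yields a polynomial-time membership test for $\Phi(\SSYT(\la,\mu))$; the complement is the claimed combinatorial interpretation, establishing $K_{\la\nu} - K_{\la\mu} \in \SP$.

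The main obstacle will be verifying that each crystal operator is genuinely defined on the entire intermediate set rather than just generically, which is where the precise form of the elementary step (the ``adjacent plateau'' condition $\eta(i+1)=\cdots=\eta(j)$) does essential work. Once that invariant is in place the forward injection and its polynomial-time inverse are routine; the remaining bookkeeping is a standard induction on the length of the dominance chain.
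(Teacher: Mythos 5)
Your strategy is the right one, and it is essentially the proof the paper has in mind: the cited argument of White runs on the Greene--Kleitman parenthesization, which is exactly the bracket matching defining your operators $\tilde f_k$, and the paper's remark about summing polynomially many $\SP$ functions is just the telescoped version of your composed injection (either composing the injections and inverting, as you do, or writing $K_{\la\nu}-K_{\la\mu}$ as a telescoping sum of per-step differences, works equally well). The forward injection, the poly-time inverse via $\tilde e_k$, and the membership test for the image are all sound.

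One piece of your setup is misstated, though harmlessly so. Under your plateau condition $\eta_t(i) > \eta_t(i+1) = \cdots = \eta_t(j)$, the weight $\eta_{t+1} = \eta_t - e_i + e_j$ is \emph{not} a partition when $j > i+1$: its $j$-th part becomes $\eta_t(j)+1$, which exceeds its $(j-1)$-st part. So a chain of such elementary steps through partitions does not exist, and the plateau is not what makes the operators total. The condition the crystal argument actually uses is only $\eta_t(i) > \eta_t(i+1)$: this forces an unmatched letter $i$ in every tableau of content $\eta_t$, so $\tilde f_i$ is everywhere defined; for each subsequent $\tilde f_k$ with $i<k<j$ the intermediate content is $\eta_t - e_i + e_k$, and the relevant difference is $\eta_t(k)+1-\eta_t(k+1)\ge 1$ using only that $\eta_t$ is weakly decreasing --- no equality of parts is needed. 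Moreover $\eta_t(i) > \eta_t(i+1)$ holds automatically whenever $\eta_t - e_i + e_j$ is a partition. So the correct decomposition is the standard one: a chain of single-box moves (remove a removable corner in row $i$, add an addable corner in row $j>i$) with every intermediate weight a partition, which exists and has polynomial length for any $\mu \trianglerighteq \nu$. With that correction the ``main obstacle'' you flag disappears and the rest of your argument goes through verbatim.
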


Although not stated in this language, the proof follows easily from the
combinatorial proof in~\cite{Whi80}.  Here we are using the following
trivial observation:  if \ts $f_1,\ldots,f_k\in \SP$ \ts for $k=$ poly$(n)$,
then \ts $f_1+\ldots + f_k \in \SP$.

Finally, let \ts $\la,\al,\be,\ga\vdash n$ \ts be such that \ts
$\al \trianglerighteq \be \trianglerighteq \ga$ \ts and \ts $\al+\ga=2\ts \be$.
Kostka numbers then satisfy log-concavity property given by the
\defn{HMMS inequality}:
\ts $K_{\la\be}^2 \ge K_{\la\al} K_{\la\ga}$ \ts \cite[Thm~2]{HMMS22}.

\begin{op}\label{op:Kostka-log-concavity}
The HMMS inequality is not in \ts $\SPr$.
\end{op}

The original proof of the HMMS inequality is based
on the \defng{Lorentzian property} of \defng{Schur polynomials}.
Solving the open problem would give an early indication in favor
of Conjecture~\ref{conj:sub-forest}, since the log-concavity
of forest numbers \ts $\{F(G,k)\}$ \ts is proved in \cite{BH20}
based on the same general approach.

{\small

\begin{rem}\label{r:tab-dominance}
The \defn{Dominance order} ``$\triangleright$'' is motivated by a technical part
in the proof of the \defng{Young symmetrizer} \ts construction,
see e.g.\ \cite[$\S$IV.2]{Weyl39} and \cite[$\S$2.4]{Sag01}, and reflects the inherent
planarity of Young diagrams.  The algebraic proof of
Proposition~\ref{prop:Kostka-White} given in~\cite{LV73} is based on iterative
calculation.\footnote{Dennis White kindly informed us that he meant \cite{LV73} as a missing
reference [4] in \cite{Whi80}.}
We note that \cite{Ver06} emphasizes the importance of identity \ts
$$(\vartriangle) \qquad
1 \uparrow^{S_n}_{S_{\mu_1}\times S_{\mu_2}\times \. \cdots} \, \bigcap \
\sign \uparrow^{S_n}_{S_{\mu_1}\times S_{\mu_2}\times \.\cdots} \. = \. \SS^\mu
$$
(see also \cite{May75}).  This is a curious byproduct
of the dominance order and its reverse.
\end{rem}
}
\smallskip

\subsection{Contingency tables}\label{ss:tab-CT}
Let \ts $\cT(\la,\mu)$ \ts be the set of \defn{contingency tables}, defined
as nonnegative integer matrices with rows sums~$\la$ and column sums~$\mu$.
Denote by \ts $\rT(\la,\mu):=|\cT(\la,\mu)|$ \ts the number of such tables.
Note that \ts $K_{\la,\mu}\le_\# \rT(\la,\mu)$, where the injection sends \ts
$A\in \SSYT(\la,\mu)$ \ts into a matrix \ts $(m_{ij})\in \cT(\la,\mu)$, where
\ts $m_{ij}$ \ts is the number of letters $j$ in $i$-th row of~$A$.

\smallskip

\defn{Barvinok's inequality} \cite{Bar07}, states that \. $\rT(\la,\mu) \le \rT(\nu,\tau)$ \. for all
\ts $\la \trianglerighteq \nu$ \ts and \ts $\mu \trianglerighteq \tau$.

\begin{prop} \label{prop:CT-Barv}
Barvinok's inequality is in \ts $\SPr$.
\end{prop}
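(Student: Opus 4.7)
The plan is to construct an explicit injection $\varphi\colon \cT(\la,\mu)\hookrightarrow \cT(\nu,\tau)$ that is computable in polynomial time and whose image is recognizable in polynomial time. The complement $\cT(\nu,\tau)\sm \varphi(\cT(\la,\mu))$ then serves as a combinatorial interpretation of $\rT(\nu,\tau)-\rT(\la,\mu)$, establishing Barvinok's inequality in $\SPr$.

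First, I would reduce to a single elementary majorization step. By the integer form of the Muirhead--Hardy--Littlewood--P\'olya lemma, any integer majorization $\la\trianglerighteq \nu$ can be realized by a sequence $\la = \la^{(0)}, \la^{(1)},\ldots, \la^{(r)} = \nu$ of length $r \le n$, each step being a Robin Hood move that decreases one coordinate by $1$ and increases another coordinate by $1$. The same applies to $\mu\trianglerighteq \tau$. Since $\rT(\la,\mu)$ depends on $\la$ and $\mu$ only as multisets (permuting rows or columns of a contingency table preserves the count), intermediate vectors need not be weakly decreasing. It therefore suffices to construct the required injection for a single elementary step on one side and compose the $O(n)$ resulting maps.

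For a single step $\la\to \la'$ on the row-sum side with $\mu$ fixed, I would use RSK. The Robinson--Schensted--Knuth correspondence is a polynomial-time bijection (with polynomial-time inverse)
$$
\cT(\la,\mu) \ \longleftrightarrow \ \bigsqcup_{\rho\vdash n} \SSYT(\rho,\la)\times \SSYT(\rho,\mu),
$$
and likewise for $\cT(\la',\mu)$. By Proposition~\ref{prop:Kostka-White}, for each shape $\rho$ there is a polynomial-time injection $\psi_\rho\colon \SSYT(\rho,\la)\hookrightarrow \SSYT(\rho,\la')$ with polynomial-time image recognition. Then $\psi_\rho \times \mathrm{id}$ is a fiberwise injection on the RSK decomposition, and conjugating by RSK yields an injection $\cT(\la,\mu)\hookrightarrow \cT(\la',\mu)$. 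To test membership in its image, apply RSK to the candidate target, read off the shape $\rho$ from the common shape of the resulting pair, and check whether the first tableau lies in the image of $\psi_\rho$. The symmetric construction handles elementary steps on the column-sum side, and composing the full sequence of $O(n)$ maps yields the desired $\varphi$.

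The main obstacle is that Proposition~\ref{prop:Kostka-White} is stated abstractly as $K_{\rho\la}\le_\# K_{\rho\la'}$, whereas we actually need this to be realized by a \emph{specific} polynomial-time injection with polynomial-time decidable image. Fortunately, White's original construction in~\cite{Whi80} provides exactly such an injection, built from local rearrangements of entries labeled by the two coordinates involved in the Robin Hood move; both the injection and its image condition are described by explicit local rules and hence are trivially polynomial-time computable. The remaining bookkeeping, that the composition of $O(n)$ polynomial-time injections remains uniformly polynomial, is straightforward in the unary encoding since each intermediate contingency table has size polynomial in~$n$ and each step runs in time polynomial in~$n$.
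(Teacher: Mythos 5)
Your proof is correct and takes essentially the same route the paper sketches: apply RSK to decompose $\rT(\la,\mu) = \sum_\rho K_{\rho\la}K_{\rho\mu}$, inject fiberwise via Proposition~\ref{prop:Kostka-White}, then conjugate back by inverse RSK. Your explicit reduction to elementary Robin Hood moves is a harmless refinement that in fact mirrors how White's injection in~\cite{Whi80} is actually constructed.
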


There are two natural proofs of Proposition~\ref{prop:CT-Barv}.  First,
following the original proof in \cite[p.~111]{Bar07}, one can
use the \ts \defng{RSK correspondence}, which is poly-time by definition.
This gives \ts $\rT(\la,\mu) = \sum_{\pi} K_{\pi\la} K_{\pi\mu}$ \ts
and then use Proposition~\ref{prop:Kostka-White}.  We then need to
use the \defng{inverse RSK correspondence}, which is also poly-time.
A more direct (still rather involved) approach is outlined
in~\cite{Pak19}.\footnote{A crucial
part of the injective proof of both White's and Barvinok's inequality
is the \defng{parenthesization construction} by Greene and Kleitman,
see \cite{GK76} and \cite[$\S$3]{GK78} (see also~\cite{dB+51}).}

{\small
\begin{rem} \label{rem:CT-Barv}
We note in passing that the log-concavity property for contingency
tables remains open.  Formally, let \ts $\la,\al,\be,\ga\vdash n$ \ts be
as in the HMMS inequality (see $\S$\ref{ss:tab-SSYT}).  Barvinok asks
if \. $\rT(\la,\be)^2 \ge^? \rT(\la,\al) \. \rT(\la,\ga)$, see \cite[p.~110]{Bar07}.
\end{rem}
}
\smallskip

\subsection{Littlewood--Richardson coefficients}\label{ss:tab-LR}
Let \ts $c^\la_{\mu\nu}$ \ts be the \defn{Littlewood--Richardson {\em (LR)} coefficients},
where \ts $\la\vdash n$, \ts $\mu\vdash k$ \ts and \ts $\nu\vdash n-k$.
Standard combinatorial interpretations for LR coefficients imply:
$$
c^\la_{\mu\ts\nu} \ts f^\mu \ts f^\nu \, \le_\# \, f^\la \quad \text{and} \quad
c^\la_{\mu\ts\nu} \ts f^\la \, \le_\# \, \tbinom{n}{k} \ts f^\mu \ts f^\nu\..
$$
We refer to \cite{Ker84,Whi81,Zel81} for the motivational explanation on how
to derive these inequalities from the RSK correspondence or via the \defng{jeu-de-taquin
correspondence}.  Taking their product gives \. $(c^\la_{\mu\ts\nu})^2 \le \binom{n}{k}$,
as was recently observed in~\cite[$\S$4.1]{PPY19}. We call this the \defn{PPY inequality}.

\begin{op} \label{op:tab-PPY}
The PPY inequality is not in~$\ts\SPr$.
\end{op}

As we discussed earlier, the problem could be resolved either by a
direct injection proving the PPY inequality, or by giving an explicit
combinatorial interpretation for \. $\binom{n}{k} -(c^\la_{\mu\ts\nu})^2$.

\smallskip

The remarkable \defn{Lam--Postnikov--Pylyavskyy inequality}~\cite{LPP07} states
that:\footnote{The actual LPP inequality in~\cite{LPP07}
is more general and written in terms of skew Schur functions. }
\begin{equation}\label{eq:LPP}\tag{LPP}
c^\la_{\mu\ts \nu} \. \le \. c^\la_{\mu\vee \nu \. \mu \wedge\nu}\,,
\end{equation}
where \ts $\mu\vee \nu$ \ts and \ts $\mu\wedge \nu$ \ts denote
the union and the intersection of Young diagrams.
The original proof is heavily algebraic and does not seem to give a
clue on how this can be proved injectively, see \cite{BBR06} for
some special cases.

\begin{op} \label{op:tab-LPP}
The LPP inequality is not in~$\ts\SPr$.\footnote{We state both open
problems in the negative largely because we would
much rather see negative solutions than positive.  Unfortunately,
at the moment there is very little evidence in favor of either direction.
}
\end{op}

Note a closely related \defn{Bj\"orner's inequality}:
$$\tbinom{n}{m} \ts f^{\mu} \ts f^{\nu} \, \le \, \tbinom{n}{r} \ts f^{\mu\vee \nu} \ts f^{\mu\wedge \nu},
$$
where \ts $|\mu|=m$, \ts $|\mu|+|\nu|=n$ \ts and \ts $|\mu\wedge\nu|=r$ \ts \cite[$\S$6]{Bjo11}.
It follows by summing over all $\la\vdash n$, of \eqref{eq:LPP} multiplied by~$f^\la$.
Of course, Bj\"orner's inequality is in~$\FP$ and follows from the~HLF.

\smallskip

\subsection{Inverse Kostka numbers} \label{ss:tab-Inverse-Kostka}
Denote by \ts $\bK=\big(K_{\la\mu}\big)$ \ts the \defn{Kostka matrix}.
Ordering all partitions w.r.t.\ the size $n=|\la|=|\mu|$ and the
majorization order \ts ``$\triangleright$'' \ts and
using \ts $K_{\la\la}=1$, we conclude that the matrix is upper triangular
and thus has an integer inverse.  Denote by \ts $\bK^{-1}=\big(K^{-1}_{\la\mu}\big)$ \ts
the \defn{inverse Kostka matrix}, and by \ts $K^{-1}_{\la\mu}$ \ts the
\defn{inverse Kostka numbers}.

E\u{g}ecio\u{g}lu and Remmel~\cite{ER90} showed that \ts $K^{-1}_{\la\mu}$ \ts
has a signed combinatorial interpretation as a sum over certain
\defng{rim-hook tableaux} (RHT) of shape~$\la$ and weight~$\mu$. It follows from
the construction that \ts $K^{-1}$ \ts is in \ts $\GapP=\SP-\SP$.
Direct involutions proving validity \ts $\bK\cdot \bK^{-1} = \bK^{-1}\cdot \bK = I$ \ts
were given in~\cite{ER90,LM06}.  Other signed combinatorial interpretations
are given in~\cite{Duan03,PR17}.

\begin{conj}\label{conj:tab-inverse-Kostka}
The function \ts $|K^{-1}_{\la\mu}|$ \ts
is not in $\ts\SPr$.
\end{conj}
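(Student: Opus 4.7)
The plan is to show, conditional on standard complexity assumptions, that $|K^{-1}_{\la\mu}| \notin \SP$ by deriving a collapse of the polynomial hierarchy from the opposite assumption. The starting point is that the E\u{g}ecio\u{g}lu--Remmel rim-hook formula expresses $K^{-1} \in \GapP$: we may write \ts $K^{-1}_{\la\mu} = P(\la,\mu) - N(\la,\mu)$ \ts where $P,N \in \SP$ count positively and negatively signed rim-hook tableaux, respectively. Thus $|K^{-1}_{\la\mu}| = \max\{P-N,\. N-P\}$, and asking whether $|K^{-1}|\in \SP$ is precisely a question about the absolute value of a $\GapP$ function.

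First, I would invoke the general principle implicit in the $\GapP$-vs-$\SP$ machinery of \cite{IP22}: if $f \in \GapP$ and $|f|\in \SP$, then $\{x\,:\,f(x)=0\}\in \coNP$. Indeed, a combinatorial interpretation of $|f|$ supplies a polynomial-time verifier for the $\NP$ question ``\ts$|f|(x)>0$\ts'', which coincides with ``\ts$f(x)\ne 0$''. Applied to $f=K^{-1}$, an affirmative answer to $|K^{-1}|\in^? \SP$ would place the vanishing problem \ts $\{K^{-1}_{\la\mu}=^?0\}$ \ts into $\coNP$.

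Second, and this is the crux, I would reduce a $\CeqP$-hard problem to the vanishing of $K^{-1}_{\la\mu}$. Since $K^{-1}$ mediates the change of basis between monomial and Schur symmetric functions and is tightly linked via the Murnaghan--Nakayama rule to $S_n$-character values, a natural two-step route is: reduce first to vanishing of a character value $\chi^\la(\mu)$, expected to be $\CeqP$-hard in suitable regimes (cf.\ the $S_n$-character discussion later in the paper), and then transfer hardness through the character/$K^{-1}$ correspondence by inverting the Kostka matrix on carefully chosen shape families. Combined with the first step, this yields $\CeqP \subseteq \coNP$, which collapses $\PH$ by Toda's theorem.

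The main obstacle is engineering this reduction. The cancellation structure of signed rim-hook tableaux is rigid, so encoding arbitrary $\GapP$ differences requires delicate construction of shape families $(\la_n,\mu_n)$; the rim-hook sign $(-1)^{\sigma(T)}$ couples to the geometry of~$\la$ in a way that resists generic encodings. A promising alternative, parallel to \cite[Prop.~2.5.1]{IP22} for the AD inequality, is to work under the Binomial Basis Conjecture: exhibit a family of shapes parametrized by $n$ (given in unary) on which $|K^{-1}_{\la_n\mu_n}|$ expands as a nontrivial integer polynomial in binomials $\binom{n}{k}$ of a form that BBC forbids from admitting an $\SP$ presentation. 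Either route demands substantial combinatorial engineering on the rim-hook side, but the BBC route has the advantage of bypassing a head-on hardness reduction in favor of a structural lower bound.
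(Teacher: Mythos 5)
You should first be aware that the statement you are trying to prove is stated in the paper as a \emph{conjecture}, with no proof given; the paper only offers motivation and, in $\S$\ref{ss:no-weaker}$(1)$, says that the strategy of \cite{IPP22} used for Theorem~\ref{t:char-abs} ``could potentially work'' here. Your outline faithfully reproduces that strategy: write $K^{-1}=P-N$ with $P,N\in\SP$ via E\u{g}ecio\u{g}lu--Remmel, observe that $|K^{-1}|\in\SP$ would put the vanishing problem $\{K^{-1}_{\la\mu}=^?0\}$ into $\coNP$ (this step is correct), and then try to derive $\CeqP\subseteq\coNP$ from $\CeqP$-hardness of that vanishing problem. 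But the $\CeqP$-hardness is exactly the missing ingredient: the paper itself only \emph{conjectures} (in $\S$\ref{ss:addendum-unary}) that $\{K^{-1}_{\la\mu}=^?0\}$ is $\CeqP$-complete, and no such reduction is known.

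The specific step that would fail as written is your proposed transfer of hardness from character vanishing to inverse-Kostka vanishing ``by inverting the Kostka matrix on carefully chosen shape families.'' The inverse Kostka numbers are not character values, and the signed objects in the E\u{g}ecio\u{g}lu--Remmel rule (special rim-hook tableaux) are combinatorially quite different from the rim-hook tableaux of the Murnaghan--Nakayama rule; there is no known way to realize a prescribed $\GapP$ difference, or even the character values themselves, as entries $K^{-1}_{\la\mu}$ on a polynomially constructible family of shapes. The analogous step for Theorem~\ref{t:char-abs} required finding pairs $(\la,\mu)$ for which the positive and negative MN counts are \emph{independent and counting-complete}, and nothing comparable has been engineered on the inverse-Kostka side. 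Your BBC alternative has the same status: one would need an explicit shape family on which $|K^{-1}_{\la_n\mu_n}|$ provably expands as a binomial-bad polynomial of $\SP$ functions, and no such family is exhibited. So the proposal is a reasonable attack plan consistent with the paper's own suggestions, but it is not a proof, and the conjecture remains open.
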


In other words, the conjecture claims that the absolute value of the
inverse Kostka numbers does not have a combinatorial interpretation.
Thus, a signed sum over combinatorial objects is the best one can have.
Further motivation behind the conjecture will become clear in the next
section.

\bigskip

\section{Characters}\label{s:char}
In this section we discuss complexity problems related to $S_n$ characters.
As before, $n$ and all partitions are given in unary.

\smallskip

\subsection{The values}\label{ss:char-values}
Let \ts $\chi^\la(\mu)$ \ts denote the character value of the irreducible
$S_n$ module \ts $\SS^\la$ \ts on the conjugacy class \ts $[\mu]$,
where \ts $\la,\mu\vdash n$. The \defng{Murnaghan--Nakayama {\em (MN)} rule},
see e.g.\ \cite[$\S$4.10]{Sag92} and \cite[$\S$7.17]{Sta99}, gives a signed
combinatorial interpretation for \ts $\chi^\la(\mu)$ \ts as a sum of signs
over \defng{rim-hook tableaux} \ts $\RHT(\la,\mu)$ \ts of shape~$\la$ and weight~$\mu$.  Similar to
the inverse Kostka numbers, it follows from
the construction that function \ts $\chi: (\la,\mu) \to \zz$ \ts is in \ts $\GapP=\SP-\SP$.

Does there exist a combinatorial interpretation of the character square
\ts $\bigl(\chi^\la(\mu)\bigr)^2$?  This is an interesting question, and the answer is
even more interesting.  On the one hand, the answer is yes when \ts
$\mu=(k^{n/k})$ \ts is a rectangle.   In this case, all rim-hook tableaux in \ts
$\RHT(\la,\mu)$ \ts have the same sign, see e.g.\ \cite[$\S$2.7]{JK81} and \cite{SW85},
so \ts $\big(\chi^\la(\mu)\big)^2=|\RHT(\la,\mu)|^2$ \ts has a combinatorial interpretation
as the number of ordered pairs of rim-hook tableaux.\footnote{In fact, it follows from
\cite{FS98,SW85} that \ts $\big\{|\RHT(\la,\mu)|\big\}\in \FP$ \ts in this case.  }

On the other hand, in full generality we have:

\smallskip

\begin{thm}[{\cite{IPP22}}{}] \label{t:char-abs}
If \. $(\chi)^2 \in \SPr$, then \ts $\coNPr = \CeqP$ \ts and \ts $\PHr=\Sigma_2^{\textsc{p}}$\ts.
\end{thm}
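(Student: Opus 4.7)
The plan is to deduce both complexity consequences from a single bridge: showing that the vanishing problem for symmetric group characters lies in $\coNPr$ under the hypothesis, and then invoking a $\CeqP$-hardness result for character vanishing.

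First, I would observe that if $(\chi)^2 \in \SPr$, then the non-vanishing problem $\{(\lambda,\mu) : (\chi^\lambda(\mu))^2 > 0\}$ is in $\NPr$: an $\SPr$ witness is, by definition, a polynomial-time verifiable certificate of positivity. Since $(\chi^\lambda(\mu))^2 = 0$ if and only if $\chi^\lambda(\mu) = 0$, the complementary \emph{vanishing} problem $\big\{\chi^\lambda(\mu) =^? 0\big\}$, with $\lambda,\mu$ given in unary, lies in $\coNPr$.

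Second, I would invoke the main technical result of \cite{IPP22}, namely that this same character vanishing problem is $\CeqP$-hard. The reduction would start from the canonical $\CeqP$-complete problem of deciding whether $f(x) = g(x)$ for two given $\SPr$ functions $f,g$, and would encode the $\GapPr$ difference $f-g$ as an $S_n$ character value via the Murnaghan--Nakayama rule. Concretely, one constructs a gadget producing a shape $\lambda$ and cycle type $\mu$ of unary size polynomial in $|x|$, such that the signed enumeration of $\RHT(\lambda,\mu)$ reproduces $f(x)-g(x)$. This step is the main obstacle: the MN sign depends delicately on the order in which rim-hooks are peeled, so controlling both the magnitudes and the signs well enough to simulate an arbitrary $\GapPr$ computation requires a carefully engineered partition shape, presumably by splicing together building-block shapes whose rim-hook enumerations have prescribed signed generating functions.

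Third, combining the two ingredients gives $\CeqP \subseteq \coNPr$. The reverse inclusion $\coNPr \subseteq \CeqP$ is standard, since any $L \in \coNPr$ whose complement is witnessed by $f \in \SPr$ is defined by the condition $f(x) = 0$, which is $\CeqP$ by definition. Therefore $\coNPr = \CeqP$. For the second conclusion, I would apply the well-known inclusion $\PHr \subseteq \NPr^{\CeqP}$ (a consequence of Toda-style arguments), which under the collapse just proved becomes $\PHr \subseteq \NPr^{\coNPr} = \Sigma_2^{\textsc{p}}$, forcing $\PHr = \Sigma_2^{\textsc{p}}$.
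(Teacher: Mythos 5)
Your argument has the right shape and, insofar as the survey's $\S$\ref{ss:no-weaker} reveals it, tracks \cite{IPP22} closely. The essential technical content is correctly located and correctly flagged as the hard step: constructing gadget pairs $(\la,\mu)$ for which the Murnaghan--Nakayama decomposition $\chi^\la(\mu)=f-g$ into positive and negative rim-hook tableaux yields $f,g\in\SPr$ that are both independent and counting-complete, which is precisely what makes the vanishing problem $\{\chi^\la(\mu)=^?0\}$ hard for $\CeqP$. You only sketch this, which is acceptable given that the theorem is attributed to \cite{IPP22}, but you should make explicit that this gadget construction is where the real work is and that the rest is routine unwinding. Your step~1 (an $\SPr$ verifier for $(\chi^\la(\mu))^2>0$ puts non-vanishing in $\NPr$, hence vanishing in $\coNPr$) and the resulting collapse $\CeqP=\coNPr$ are correct, with the converse inclusion $\coNPr\subseteq\CeqP$ being standard as you say. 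For $\PHr=\Sigma_2^{\textsc{p}}$ you deduce it from the $\CeqP=\coNPr$ collapse via a Toda-style inclusion, whereas the survey points to the general \cite{IP22} framework for the non-monotone polynomial $\vp(x,y)=(x-y)^2$ evaluated at independent counting-complete arguments; both routes are sound and likely rest on the same underlying argument. However, the specific inclusion you invoke, $\PHr\subseteq\NPr^{\CeqP}$, is not a textbook statement as phrased and deserves a citation: it follows from Tarui's $\PHr\subseteq\mathrm{RP}\cdot\CeqP$ together with the relativizing inclusion $\mathrm{RP}\subseteq\NPr$, after which $\NPr^{\CeqP}=\NPr^{\coNPr}=\Sigma_2^{\textsc{p}}$ finishes the collapse.
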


\smallskip

In other words, it is \emph{very unlikely} that character square has a
combinatorial interpretation, assuming th polynomial hierarchy does
not collapse to the second level.

{\small
\begin{rem} \label{rem:char-identities}
Following \cite{IPP22}, one way to understand the implications of the theorem
is to compare two identities:
$$
n! \, = \, \sum_{\la\vdash n} \, \big(\chi^\la(1)\big)^2  \quad \text{and} \quad
n! \, = \, \sum_{\pi\in S_n} \big(\chi^\la(\pi)\big)^2\,, \ \ \. \text{for all \ $\la\vdash n$.}
$$
The former is the \defng{Burnside identity} \ts since \ts $\chi^\la(1)=f^\la$, and follows
from the RSK correspondence.  The latter  is the
character orthogonality formula, and the theorem explains why there is no natural
analogue of the RSK correspondence in this case.  Simply put, the terms on the right
are not actually counting any combinatorial objects.\footnote{There is, however,
an involutive proof of both character orthogonality relations based on the MN rule
\cite{Whi83,Whi85}.
}
\end{rem}
}

\smallskip

\subsection{Row and column sums}\label{ss:char-sums}
In~\cite[$\S$3]{Sta00}, Stanley defines
$$
a_\la \. := \. \sum_{\mu\vdash n} \. \chi^\la(\mu) \quad \text{and}
\quad b_\la \. := \. \sum_{\mu\vdash n} \. \chi^\mu(\la)\., \quad \text{where \ $\la\vdash n$.}
$$
which he calls \defn{row sums} and \defn{column sums} in the character table, respectively.
It is known that \ts $b_\la = \bigl|\{\om\in S_n\,:\,\om^2=\si\}\bigr|$, where \ts
$\si\in [\la]$ \ts is a fixed permutation of type~$\la$,
see e.g.\ \cite[Exc~7.69]{Sta99} and~\cite[Ex.~11, p.~120]{Mac95}.\footnote{It follows
form here that \ts $b_\la >0$ \ts if and only if every even part of~$\la$ has even
multiplicity, see e.g.\ \cite{BO}.}
Thus, the column sums are
in~$\ts\SP$.  One can ask a similar question about the row sums.

First, we note that \ts $a_\la \ge 0$ \ts for all~$\la$.\footnote{It
is known that \ts $a_\la \ge 1$ \ts for all $\la\vdash n\ge 2$ \cite{Fru86}. For
the alternating group~$A_n$,
this \defng{strict positivity} is proved in~\cite{HZ06} by a non-combinatorial
(and much shorter) argument. For other finite simple groups, strict positivity
is proved in~\cite{HSTZ13}.}  This follows from the fact that \.
$a_\la = \<\rho_n,\chi^\la\>$,
where \ts $\rho_n$ \ts is the character of the
\defng{conjugation representation}, see e.g.\ \cite[Exc~7.71]{Sta99}.\footnote{In fact,
this approach can be used to show that
row sums of characters are nonnegative integers for all finite
groups \cite{Sol61}.}

\begin{conj}[{\rm cf.\ Problem~12 in~\cite{Sta00}}{}] \label{conj:char-row-sums}
Row sums \ts $\{a_\la\}$ \ts are not in~$\ts\SPr$.
\end{conj}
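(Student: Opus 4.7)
The plan is to reduce non-membership to the character-square hardness in Theorem~\ref{t:char-abs} and to the broader framework for $\SP$-nonmembership developed in Section~\ref{s:no}. The starting point is the decomposition of the conjugation representation: $\rho_n = \bigoplus_{\mu\vdash n} \operatorname{Ind}_{Z(\sigma_\mu)}^{S_n} \mathbf{1}$, which by Frobenius reciprocity gives
$$
a_\la \, = \, \sum_{\mu\vdash n} \big\langle \mathbf{1}, \. \chi^\la|_{Z(\sigma_\mu)}\big\rangle\,.
$$
This rewrites $a_\la$ as a sum, over wreath-product centralizers $Z(\sigma_\mu)\cong \prod_k (\Zb/k\Zb)\wr S_{m_k(\mu)}$, of multiplicities of the trivial representation in restrictions --- a setting where plethystic identities are explicit and tractable. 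The target is to build, from a putative polynomial-time verifier for $a_\la$, a verifier for some function already known (or provably) to lie outside $\SP$ under standard complexity hypotheses.

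The concrete attempt I would pursue first is to exhibit a polynomial-time-computable nonnegative integer linear combination $\sum_i c_i\, a_{\la_i}$ equal to $(\chi^\mu(\nu))^2$ for arbitrary input $(\mu,\nu)$, where the $\la_i$'s and coefficients $c_i\in \FP$ are constructed from $(\mu,\nu)$ via plethysm with the centralizer subgroups above. Such a reduction would immediately give $(\chi)^2 \in \SP$ from $\{a_\la\}\in \SP$, contradicting Theorem~\ref{t:char-abs} and forcing $\PHr=\Sigma_2^{\textsc{p}}$. As a backup I would run the Binomial Basis Conjecture route from \cite{IP22}: the MN-rule expression $\chi^\la(\mu) = \sum_{T\in \RHT(\la,\mu)} \sign(T)$ summed over all cycle types $\mu$ involves massive cancellation, and a BBC-style argument applied to the linear functional $\chi^\la \mapsto \sum_\mu \chi^\la(\mu)$ on the character table should preclude any polynomial-size positive binomial presentation, forcing $\{a_\la\}\notin \SPr$ conditionally on BBC.

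The hard part, and the reason this has remained open since Stanley posed it in~\cite{Sta00}, is precisely that aggregation of $\chi^\la(\mu)$ over all cycle types destroys the fine signed structure of the MN rule while still producing a nonnegative integer, ruling out purely sign-based obstructions to $\SP$-membership. Unlike the column sums $b_\la$, which count square roots and lie in $\SP$ for essentially trivial reasons, $a_\la$ encodes multiplicities in an exotic permutation representation and is the natural habitat of Kronecker-type hardness. Any successful reduction will likely parallel the Kronecker reductions of Section~\ref{s:Kron}, but with wreath-product centralizers in place of $S_n\times S_m$; constructing this reduction --- in particular, controlling the cancellations so that the positive combination in the first paragraph actually exists --- is the central obstacle I expect to dominate the proof.
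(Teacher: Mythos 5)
This is Conjecture~\ref{conj:char-row-sums}, which the paper states as an \emph{open problem} and does not prove; so there is no ``paper's proof'' to match, and your text is a research plan rather than a proof. Judged as such, it has a decisive gap: the entire argument rests on the claimed existence of a poly-time-computable nonnegative integer combination \ts $\sum_i c_i\ts a_{\la_i} = (\chi^\mu(\nu))^2$, and you give no construction of it, no reason it should exist, and no mechanism for producing one. Nothing in the representation theory suggests that character squares lie in the $\FP$-cone generated by polynomially many row sums; for each $n$ there are only $p(n)$ numbers $a_\la$ but $p(n)^2$ values $(\chi^\mu(\nu))^2$, and the row sums are highly aggregated quantities (each $a_\la=\<\rho_n,\chi^\la\>$ already sums the refined multiplicities $a_{\la\mu}$ over all $\mu$), so recovering a single signed character value --- let alone its square --- from positive combinations of them is exactly the kind of ``inversion'' that the aggregation destroys. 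Without that identity, Theorem~\ref{t:char-abs} simply does not connect to $\{a_\la\}$, and the reduction collapses.

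The backup BBC route is similarly unanchored: the machinery of \cite{IP22} applies to binomial-bad polynomials evaluated obliviously at independent counting-complete $\SP$ functions, and you have not exhibited $a_\la$ in that form. Writing each $\chi^\la(\mu)=f_\mu-g_\mu$ via the MN rule and summing over $\mu$ does not produce independent inputs, and the functional $\chi^\la\mapsto\sum_\mu\chi^\la(\mu)$ is linear, hence monotone and binomial-good, so the polynomial-obstruction framework has no purchase on it as stated. Worth noting also that the paper's own discussion points the other way from your plan: Proposition~\ref{prop:char-dist} shows the refined row sums $a_{\la\mu}$ for $\mu$ with distinct parts \emph{are} in $\SPr$, and the general case reduces to rectangles and thence to plethysm coefficients $p_\la((k),\nu)$ --- i.e.\ the remaining obstacle is a plethysm positivity question (Conjecture~\ref{conj:char-plethysm}), not a character-square hardness question. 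Any serious attack on Conjecture~\ref{conj:char-row-sums} would have to go through that reduction rather than around it.
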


In fact, there are very few cases when a combinatorial interpretation of
\ts $a_\la$ \ts is known.  For example, \ts $a_{(n)}= p(n)$ \ts and \ts
$a_{(1^n)}=|\{\la\vdash n\.:\. \la=\la'\}|$, see \cite[Prop.~1]{BE16}.
Proving the conjecture would represent a major advance in the area,
as we explain below.

{\small
\begin{rem}  \label{rem:char-total}
The total sum of the entries in the character tables of $S_n$ is a sequence
of interest in its own right, see \cite[\href{https://oeis.org/A082733}{A082733}]{OEIS}.
So is the determinant of the character table, see \cite[Cor.~6.5]{Jam78} and
\cite[\href{https://oeis.org/A007870}{A007870}]{OEIS}, and even the permanent
\cite{SS84}.
\end{rem}
}

\smallskip

\subsection{Refinements}\label{ss:char-refine}
Denote by \ts $\rho^\mu[\si] := |\{\om \in [\mu] \. : \. \om \si = \si\om\}|$,
where \ts $[\mu]\ssu S_n$ is the conjugacy class of permutations of type
$\mu\vdash n$.  In other words, \ts $\rho^\mu$ \ts is the character of the
conjugation action on the conjugacy class $[\mu]$.
From above, we have \. $\rho_n = \sum_{\mu\vdash n} \rho^\mu$.
Define \defn{refined row sums} \. $a_{\la\mu} := \<\rho^\mu,\chi^\la\> \in \nn$,
and note that \. $a_\la = \sum_{\mu\vdash n} a_{\la\mu}$\ts.

\begin{conj}\label{conj:char-row-sums-refined}
Refined row sums \ts $\{a_{\la\mu}\}$ \ts are not in~$\ts\SPr$.
\end{conj}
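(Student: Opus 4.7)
The plan is to derive Conjecture~\ref{conj:char-row-sums-refined} as an immediate consequence of the preceding Conjecture~\ref{conj:char-row-sums}, exploiting the identity stated just before the statement:
\[
a_\lambda \, = \, \sum_{\mu\vdash n} \. a_{\lambda\mu},
\]
together with the elementary fact that $\SPr$ is closed under summation indexed by combinatorial objects (here, partitions of $n$ in unary) that can be enumerated and verified in polynomial time. In this setting that closure is the whole argument: any $\SPr$-presentation of the refined row sums automatically assembles into an $\SPr$-presentation of the unrefined row sums.

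More precisely, suppose for contradiction that $\{a_{\lambda\mu}\}\in\SPr$. Then there exists a polynomial-time verifier $M(\lambda,\mu,w)$ and a witness-length polynomial $q(n)$ such that, for every pair $(\lambda,\mu)$ of partitions of $n$ given in unary,
\[
a_{\lambda\mu} \, = \, \big|\big\{w\in\{0,1\}^{q(n)} \,:\, M(\lambda,\mu,w)=1\big\}\big|.
\]
Define a verifier $M'$ for $\{a_\lambda\}$ whose input is $\lambda\vdash n$ (in unary) and whose witness is a pair $(\mu,w)$, with $\mu$ a candidate partition of $n$ padded to a fixed polynomial length and $w\in\{0,1\}^{q(n)}$. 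On such input, $M'$ first checks in polynomial time that $\mu$ is a genuine partition of $n$, and otherwise rejects; it then runs $M(\lambda,\mu,w)$ and accepts iff $M$ does. The number of accepting witnesses of $M'$ on input $\lambda$ is exactly $\sum_{\mu\vdash n} a_{\lambda\mu} = a_\lambda$, so $\{a_\lambda\}\in\SPr$, contradicting Conjecture~\ref{conj:char-row-sums}.

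The main obstacle is that this argument is entirely conditional: it reduces Conjecture~\ref{conj:char-row-sums-refined} to Conjecture~\ref{conj:char-row-sums}, which the author has just flagged as a major open problem whose resolution ``would represent a major advance in the area''. An unconditional proof along the same lines is therefore out of reach without first settling the unrefined row-sum problem. A more direct route would try to exhibit specific families $(\lambda,\mu)$ for which the values $a_{\lambda\mu}$ themselves encode computationally hard information, for instance by using Frobenius reciprocity $a_{\lambda\mu}=\dim(\SS^\lambda)^{C_\sigma}$ (with $\sigma\in[\mu]$ and $C_\sigma$ the centralizer) and relating hardness of these invariant dimensions to hardness of character squares via Theorem~\ref{t:char-abs}. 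The difficulty is that the averaging over $C_\sigma$ in the definition of $a_{\lambda\mu}$ tends to smooth out the hardness present in individual character values, so this direct attack appears substantially harder than the conditional summation reduction above, and likely requires the machinery developed in Section~\ref{s:no}.
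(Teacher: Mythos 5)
Your conditional reduction is exactly the observation the paper makes in the sentence immediately following the statement: ``Clearly, the proof of Conjecture~\ref{conj:char-row-sums} implies the same for Conjecture~\ref{conj:char-row-sums-refined}.'' Since the statement is itself an open conjecture with no unconditional proof anywhere in the paper, your verifier-level implementation of the summation argument (closure of $\SPr$ under polynomially many indexed sums, applied to $a_\lambda=\sum_{\mu\vdash n}a_{\lambda\mu}$) is precisely the intended justification, and you correctly flag that it reduces the problem to Conjecture~\ref{conj:char-row-sums} rather than resolving it.
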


Clearly, the proof of Conjecture~\ref{conj:char-row-sums}
implies the same for Conjecture~\ref{conj:char-row-sums-refined}.
We warn the reader that it does not follow from definition
that \ts $\{a_{\la\mu}\}\in \GapP$.  Indeed, the definition states
$$
a_{\la\mu} \, := \, \<\rho^\mu,\chi^\la\> \, = \, \frac{1}{n!} \. \sum_{\om \in S_n} \. \rho^\mu[\si] \. \chi^\la[\om]\..
$$
Even though the terms on the RHS are in~$\SP$, there is no obvious
way to divide the sum by~$n!$  The claim is true nonetheless, as
we explain later in this section.

\smallskip

Towards the open problem, it was proved by Kra\'{s}kiewicz
and Weyman~\cite{KW01}
(see also \cite[Exc~7.88b]{Sta99} and \cite[$\S$3]{RW20}), that
\begin{equation}\label{eq:KW}\tag{KW}
a_{\la\ts (n)} \, = \, \big|\bigl\{A\in \SYT(\la) \ :\ \maj(A) = 0 \mod n\bigr\}\big|\.,
\end{equation}
where \ts $\maj(A)$ \ts denotes \defng{major index} of~$A$.
This implies that \ts $\{a_{\la\ts (n)}\}\in\SPr$.
The following result show how close \eqref{eq:KW} gets us
to resolving the open problem.

\begin{prop}[{\rm folklore}{}]  \label{prop:char-dist}
Denote by $\cd$ the set of partitions into distinct parts.
Then \\ $\{a_{\la\mu}\.:\. \mu \in \cd\} \in\SPr$.  Furthermore,
if \ts $\{a_{\la\mu}\.:\.\mu =(r^{n/r})\}\in\SPr$,
then \ts $\{a_{\la\mu}\}\in\SPr$.
\end{prop}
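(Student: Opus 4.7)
The plan is to establish a single reduction formula expressing $a_{\la\mu}$ in terms of values at rectangular cycle types, then specialize. First, since $\rho^\mu$ is the permutation character of $S_n$ acting on the conjugacy class $[\mu]$, one has $\rho^\mu = \mathrm{Ind}_{Z_\mu}^{S_n} \mathbf{1}$, where $Z_\mu$ is the centralizer of a fixed element of type $\mu$. Writing $\mu = (1^{m_1} 2^{m_2} \cdots)$ and $n_i := i\ts m_i$, the centralizer factors as $Z_\mu = \prod_i Z_{(i^{m_i})}$ sitting inside the Young subgroup $\prod_i S_{n_i} \le S_n$, so transitivity of induction yields
$$
\rho^\mu \,=\, \mathrm{Ind}_{\prod_i S_{n_i}}^{S_n} \bigotimes_i \rho^{(i^{m_i})}.
$$

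Next, I would apply Frobenius reciprocity together with the iterated Littlewood--Richardson branching rule $\chi^\la \downarrow_{\prod_i S_{n_i}} = \sum c^\la_{\la^{(1)},\ldots,\la^{(k)}} \bigotimes_i \chi^{\la^{(i)}}$ to obtain the master identity
$$
a_{\la\mu} \,=\, \sum_{(\la^{(i)})\,:\,|\la^{(i)}|=n_i} c^\la_{\la^{(1)},\ldots,\la^{(k)}}\,\prod_i a_{\la^{(i)},\,(i^{m_i})}.
$$
Every factor on the right is a nonnegative integer, the multi-LR coefficients are in $\SPr$ (witnesses: iterated LR tableaux), and $\SPr$ is closed under products and disjoint unions whenever the encoding is polynomial in the input size. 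A natural witness for $a_{\la\mu}$ thus consists of a tuple $(\la^{(i)})$ with $|\la^{(i)}|=n_i$, an LR witness for $c^\la_{\la^{(1)},\ldots,\la^{(k)}}$, and inner witnesses for each $a_{\la^{(i)},(i^{m_i})}$. Since $n$ is in unary, all partitions $\la^{(i)}$ and tableaux fit in polynomial size and can be verified in polynomial time --- provided the inner witnesses can.

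The two assertions now follow by specialization. If $\mu\in\cd$ has distinct parts, then every nonzero $m_i$ equals $1$, so each inner factor is $a_{\la^{(i)},(i)}$; the Kra\'skiewicz--Weyman formula \eqref{eq:KW} exhibits this as $|\{A\in \SYT(\la^{(i)}) : \maj(A)\equiv 0 \pmod i\}|$, which is in $\SPr$, and so $a_{\la\mu}\in\SPr$. If instead $\{a_{\la,(r^{n/r})}\}\in\SPr$ is assumed, then every inner factor $a_{\la^{(i)},(i^{m_i})}$ is in $\SPr$ by hypothesis since $(i^{m_i})$ is a rectangle, and again $a_{\la\mu}\in\SPr$.

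I do not expect a real obstacle: the two substantive steps are the factorization of $Z_\mu$ inside the Young subgroup and the two-step induction yielding the master identity; the rest is a routine closure argument for $\SPr$ that only works cleanly because $n$ is encoded in unary. This is consistent with the proposition being folklore, and also explains why the rectangular case genuinely contains the full content of the general problem.
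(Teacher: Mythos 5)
Your proof is correct and takes essentially the same route as the paper: both decompose $\rho^\mu$ as an induction from a product of ``rectangular'' centralizer pieces, pass through the intermediate Young subgroup by transitivity of induction, extract coefficients via the iterated LR rule, and finish with the Kra\'skiewicz--Weyman description~\eqref{eq:KW} for the base cases. The only stylistic difference is that you phrase a single master identity valid for all $\mu$ and then specialize twice, whereas the paper writes out the distinct-parts computation and then remarks that the rectangular case follows by ``replacing rows with rectangles of distinct lengths''; your unified formulation makes that second step explicit rather than by fiat.
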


Versions of this result are well-known.  For completeness, we include
a short proof in~$\S$\ref{ss:proofs-char-dist}.  The proposition implies
that to disprove Conjecture~\ref{conj:char-row-sums} it suffices
to give a combinatorial interpretation for \ts
$\big\{a_{\la\. r^{n/r}}\big\}$.

{\small
\begin{rem}\label{rem:char-White-Polya}
Curiously, refined row sums can be defined and generalized using
P\'olya's theory for general permutation groups, see \cite{Whi19,RW20}.
This approach leads to a plethora of numbers in search of combinatorial
interpretations, including some of those in~$\S$\ref{ss:seq-isom}.
From our point of view, this is a good starting
place to prove for non-existence of such combinatorial
interpretations in full generality.
\end{rem}
}

\smallskip

\subsection{Plethysm coefficients}\label{ss:char-plethysm}
Denote by \ts $p_\la(\mu,\nu)$ \ts the \defn{plethysm coefficient},
which can defined in terms of Schur functions as \ts
$p_\la(\mu,\nu)=\<s_\mu[s_\nu],s_\la\>$, see e.g.\ \cite[\S$7$.A2]{Sta99}
and \cite[$\S$1.8]{Mac95}.  Note that the bracket product \ts $s_\mu[s_\nu]$ \ts is
noncommutative and equal to the trace of the composition of irreducible
$\GL$-modules corresponding to $\mu$ and $\nu$: \ts $S^\mu(S^\nu V)$.

\begin{conj}[{\rm cf.\ Problem~9 in~\cite{Sta00}}{}]\label{conj:char-plethysm}
Plethysm coefficients \ts $\big\{p_\la(\mu,\nu)\big\}$ \ts are not in~$\ts\SPr$.
\end{conj}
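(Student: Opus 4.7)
The plan is to reduce the character-square problem to plethysm and invoke Theorem~\ref{t:char-abs}. That is, we seek a polynomial-time construction $(\lambda,\mu) \mapsto (\lambda',\mu',\nu')$ such that $\big(\chi^\lambda(\mu)\big)^2$ is expressible as a $\SP$-computable combination of plethysm coefficients $p_{\lambda'}(\mu',\nu')$. Then $\big\{p_\lambda(\mu,\nu)\big\} \in \SP$ would place $\big(\chi\big)^2 \in \SP$, contradicting Theorem~\ref{t:char-abs} modulo $\PH$ not collapsing to $\Sigma_2^{\textsc{p}}$.

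The starting point would be the classical plethystic evaluation at a power sum,
\[
s_\mu[p_r] \, = \, \sum_{\nu \, : \, r \mid \nu_i} \. \chi^\mu\big(\nu/r\big) \. s_\nu \ts ,
\]
which, combined with the Newton expansion $p_r = \sum_k (-1)^k s_{(r-k,1^k)}$, exhibits $S_n$-character values at rectangular conjugacy classes as signed sums of a polynomial number of plethysm coefficients. This already shows that $\{\chi^\lambda((r^k))\} \in \GapP$ via plethysm; the key new input I would seek is a rewriting that produces the \emph{square} of a character as a \emph{single} plethysm coefficient (or a short manifestly nonnegative sum thereof), so that the reduction is parsimonious rather than merely $\GapP$-level. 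Candidate gadgets include pairing $s_\mu[p_r]$ with its image under transposition and extracting a diagonal coefficient, or exploiting $s_{\mu}[s_\nu] \cdot s_\mu[s_\nu]$ via Littlewood--Richardson to isolate a doubled coefficient. Once such a clean identity is in hand, the reduction part is then routine in the spirit of $\S$\ref{s:no} and~\cite{IP22}.

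The main obstacle, and in my view the entire content of the conjecture, is precisely the passage from $\GapP$ to $\SP$. The identities above give plethysm coefficients \emph{as signed sums} of character values (or vice versa), so plugging them in naively only tells us that plethysm lies in $\GapP$, which is already known and useless here. A genuine reduction must carry positivity to positivity, which for signed expansions requires a cancellation-free reformulation — morally the same barrier faced by Conjecture~\ref{conj:tab-inverse-Kostka} and by the character-square Theorem~\ref{t:char-abs} itself. I expect that the right path is to adapt the framework of~\cite{IPP22}: one embeds a $\CeqP$-hard problem (equality of two $\SP$ functions) into the vanishing question $p_\lambda(\mu,\nu)=^?0$, leveraging the $\NP$-hardness of plethysm positivity due to Fischer--Ikenmeyer as a black box and strengthening it to encode also \emph{non}-vanishing witnesses of equality.

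A backup plan, should the reduction to character squares prove too delicate, is to work directly: strengthen the Fischer--Ikenmeyer $\NP$-hardness of positivity to $\coNP$-hardness of the vanishing problem $\{p_\lambda(\mu,\nu) =^? 0\}$. By the standard equivalence, this suffices, since $\{p_\lambda\}\in\SP$ places vanishing in $\coNP$, so $\coNP$-hardness of vanishing would force $\NP=\coNP$, again collapsing $\PH$. Here the hard step is designing plethysm instances whose vanishing encodes the \emph{un}satisfiability of a SAT formula, for which the Kronecker-style embedding strategies of~\cite{IP22} offer the most promising template.
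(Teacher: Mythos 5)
First, a point of order: Conjecture~\ref{conj:char-plethysm} is stated in the paper as an open conjecture --- there is no proof to compare against, and your text is, by its own admission, a research plan rather than a proof. Judged as a plan, the overall strategy (reduce a hard vanishing problem --- character squares, or a $\CeqP$-hard equality problem --- to plethysm coefficients, then invoke Theorem~\ref{t:char-abs} or a $\CeqP\subseteq\coNPr$ collapse) is the natural one and is consistent with the program of Section~\ref{s:no} and \cite{IPP22}; the paper itself records the relevant bridge \ts $a_{\la\ts(m^k)}=p_\la((k),\rho^{(m)})=\sum_\nu a_{\nu\ts(m)}\ts p_\la((k),\nu)$ \ts in $\S$\ref{ss:char-plethysm}. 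But every load-bearing step is missing. The identity you call the ``classical plethystic evaluation at a power sum'' is not correct as written: by the SXP rule, \ts $s_\mu[p_r]$ \ts expands as a \emph{signed} sum of \ts $s_\nu$ \ts over partitions with empty $r$-core, whose parts are not divisible by~$r$ in general (already \ts $s_{(1)}[p_2]=s_{(2)}-s_{(1,1)}$), not as a sum over $\nu$ with all parts divisible by~$r$ with coefficients $\chi^\mu(\nu/r)$. More importantly, the ``clean identity'' expressing $\bigl(\chi^\la(\mu)\bigr)^2$ as a positive combination of polynomially many plethysm coefficients is exactly the missing content of the conjecture, and you do not supply it or any evidence it exists; all known relations between plethysms and characters are cancellation-laden, which is precisely the $\GapPr$-vs-$\SPr$ barrier you correctly identify but do not cross.

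Second, the backup plan has its complexity polarity reversed. If $\{p_\la(\mu,\nu)\}\in\SPr$, then positivity is in $\NPr$ and vanishing is in $\coNPr$. Proving that \emph{vanishing} is $\coNPr$-hard is literally equivalent to the Fischer--Ikenmeyer statement that \emph{positivity} is $\NPr$-hard, and combining ``vanishing is $\coNPr$-hard'' with ``vanishing is in $\coNPr$'' yields only $\coNPr$-completeness --- no collapse of anything. To run the argument of $\S$\ref{ss:Kron-motivation}$(3)$ or of Theorem~\ref{t:char-abs} you need the opposite direction: $\NPr$-hardness (or $\CeqP$-hardness) of \emph{vanishing}, equivalently $\coNPr$-hardness of \emph{positivity}, which is a genuinely new and much harder statement; your proposed gadget (``plethysm instances whose vanishing encodes unsatisfiability'') again builds the already-known reduction from {\sc SAT} to positivity. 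As written, the backup plan asks for something equivalent to a known result and would not prove the conjecture even if carried out.
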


Computing plethysm coefficients is so exceedingly difficult, there are very
few special cases when they are known to have a combinatorial interpretation.
The formalism in~\cite[$\S$4]{LR11} implies that \ts
$\big\{p_\la(\mu,\nu)\big\}\in \GapP$.\footnote{See also \cite{FI20} for the
binary case.}

The refined character sums can be expressed in terms of plethysm coefficients:
$$
a_{\la \ts (m^k)}\, = \, p_\la\big((k),\rho^{(m)}\big)
\, = \, \sum_{\nu\vdash m} \. a_{\nu \ts (m)} \, p_\la\big((k),\nu\big),
$$
where $\la\vdash n=m\ts k$, see e.g.\ \cite{AS18,Sun18}.  By~\eqref{eq:KW},
a combinatorial interpretation for plethysm coefficients \ts $p_\la(\mu,\nu)$ \ts
when $\mu$ is a row shape, suffices to disprove Conjectures~\ref{conj:char-row-sums}
and~\ref{conj:char-row-sums-refined}.

{\small
\begin{rem}\label{rem:char-Lie}
There is a closely related study of multiplicities \ts $\wt a_{\la\mu}$ \ts
in the \defn{higher Lie modules}, see \cite{AS18,Kly74}.  The special case \ts $\mu=(n)$ \ts is given
by
$$
\wt a_{\la\ts (n)} \, = \, \big|\bigl\{A\in \SYT(\la) \ :\ \maj(A) = 1 \mod n\bigr\}\big|\.,
$$
see \cite{KW01}.  The results are completely parallel here:
it is not known whether  \ts $\big\{\wt a_{\la\mu}\big\}$ \ts are in $\SP$, and
it suffices to resolve this problem in the rectangular case
\ts $\wt a_{\la \ts (m^k)}$, which in turn reduces to plethysm coefficients \ts
$p_\la\big((k),\nu\big)$.  We refer to \cite{AS18,Sun18} for details and
further references.
\end{rem}
}

\smallskip

\subsection{Hurwitz numbers} \label{ss:char-Hurwitz}
Let \ts $\mu=(\mu_1\ldots,\mu_\ell)\vdash n$, and let \ts $g\le 0$ \ts be a fixed constant.
Denote by \ts $H_{g\mu}$ \ts is the number of products of transpositions \ts
$(i_1,j_1)\cdots (i_m,j_m)=w$ \ts in $S_n$, such that

\smallskip

$\circ$ \ $m\ts =\ts 2g-2+n+\ell(\mu)$,

$\circ$ \ $w\in S_n$ \ts has cycle type~$\mu$, and

$\circ$ \ $\<(i_1,j_1),\ldots,(i_m,j_m)\>\subseteq S_n$ \ts is a transitive subgroup.

\smallskip
\nin
The (single) \defn{Hurwitz numbers} \ts are defined as \ts $h_{g\mu}:=H_{g\mu}/n!$,
see e.g.~\cite{GJ97}.  Although one can use the Frobenius character formulas to prove
that they are integers, the following result comes as a surprise.

\smallskip

\begin{prop}[{\rm see \cite{GJV05}}{}]\label{prop:char-Hurw}
Hurwitz numbers \ts $\{h_{g\mu}\}$ \ts are in~$\ts\SPr$.
\end{prop}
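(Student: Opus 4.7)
The trivial observation is that $H_{g,\mu}$ itself is in $\SPr$: given a tuple $\big((i_1,j_1),\ldots,(i_m,j_m)\big)$, its cycle-type is computable in polynomial time, and transitivity of the generated subgroup is decided by a breadth-first search on the graph with edges $\{i_k,j_k\}$. The real content of the proposition is that the integer-valued function $h_{g,\mu}:=H_{g,\mu}/n!$ is still in $\SPr$. My plan is to prove the stronger statement that $h_{g,\mu}\in\FPr$ for each fixed $g$, which suffices since $\FPr\subseteq\SPr$.

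The approach uses two ingredients from the literature. First, the Goulden--Jackson cut-and-join recursion expresses $m\cdot h_{g,\mu}$, where $m=2g-2+n+\ell(\mu)$, as an explicit linear combination of Hurwitz numbers $h_{g',\mu'}$ with $m(g',\mu')=m-1$. The summands correspond to the two ways the last transposition can interact with $w=t_1\cdots t_{m-1}$ --- cutting a single cycle or joining two cycles --- and the number of summands at each step is $O(n^2)$; the recursion bottoms out at explicit closed-form base cases (for example, the classical tree case $h_{0,(n)}$, computable from Cayley's formula). Second, the polynomiality theorem of Goulden--Jackson--Vakil asserts that, for each fixed $g$ and each fixed $\ell$, the rescaled quantity
\[
h_{g,\mu}\cdot\prod_{i=1}^{\ell}\frac{\mu_i!}{\mu_i^{\mu_i}}
\]
is a symmetric polynomial $P_{g,\ell}$ in $(\mu_1,\ldots,\mu_\ell)$ of total degree bounded by a function of $g$ and $\ell$ alone.

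Combining these, the algorithm is as follows: given $(g,\mu)$ with $g$ fixed, read off $\ell=\ell(\mu)$; expand $P_{g,\ell}$ in a symmetric-function basis of size $\poly(\ell)$; determine its coefficients by Lagrange-type interpolation from $\poly(\ell)$ base evaluations of bounded complexity computed by a truncated cut-and-join recursion on small inputs; finally, evaluate $P_{g,\ell}$ at $(\mu_1,\ldots,\mu_\ell)$ and multiply by the prefactor $\prod_i \mu_i^{\mu_i}/\mu_i!$. Integrality and nonnegativity of the output come for free from the topological meaning of $h_{g,\mu}$ as a weighted count of connected genus-$g$ ramified covers of $\mathbb{C}\mathbb{P}^1$ with profile $\mu$ over $\infty$ and simple branching elsewhere. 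The main obstacle will be the coefficient-extraction step: a naive memoization of cut-and-join over all partitions reachable from $\mu$ is exponential in $n$ (since $p(n)$ grows super-polynomially), so the algorithm must exploit the polynomiality structure to avoid ever materializing the full recursion tree. Producing, in place of this $\FPr$ argument, a direct combinatorial interpretation by canonical orbit representatives under the $S_n$-conjugation action would be desirable, but this runs immediately into graph-isomorphism-style canonicalization obstacles and seems out of reach with current tools.
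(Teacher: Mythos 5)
Your plan promotes the statement to $h_{g\mu}\in\FPr$ for each fixed $g$, and this is where it breaks down. In $\S$\ref{ss:addendum-unary}, item~(15), the paper records that {\sc HurwitzNumberUnaryInput} is $\SPr$-complete, citing \cite{PP22+}, so $h_{g\mu}\notin\FPr$ unless $\FPr=\SPr$; the $\FPr$ route is not a shortcut but an impasse. Concretely, the coefficient-extraction step fails: $P_{g,\ell}$ is a symmetric polynomial in $\ell$ variables of total degree $3g-3+\ell$, the dimension of $\overline{\mathcal{M}}_{g,\ell}$, so for fixed $g$ the space of candidate polynomials has dimension of order $p(3g-3+\ell)=e^{\Theta(\sqrt{\ell})}$. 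This is not a symmetric-function basis of size $\poly(\ell)$ --- it is superpolynomial in $\ell$, hence in $n$ --- so there is no interpolation from polynomially many evaluations. As you observe, memoizing cut-and-join over all reachable partitions is likewise superpolynomial, and the polynomiality structure does not rescue it; the coefficients of $P_{g,\ell}$ are Hodge integrals, which adds a further unresolved layer.

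The intended proof, via \cite{GJV05} (see also the Hurwitz galaxies of \cite{DPS14}), is a genuine combinatorial interpretation: $h_{g\mu}$ is exhibited as the number of certain labeled, poly-time verifiable combinatorial structures, so the $S_n$-conjugation orbit-selection problem never arises. Your closing diagnosis --- that lex-smallest orbit representatives run into canonicalization obstacles --- is exactly right, and is precisely why the paper notes that the result ``comes as a surprise''; the content of the cited works is a rigidified model that sidesteps orbit selection. Producing an $\SPr$-verifier from such a model is the proof, not the $\FPr$ detour.
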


\smallskip

The result extends to \defng{double Hurwitz numbers} which we leave undefined.
See also \cite{DPS14} for an alternative combinatorial interpretation
via bijection with certain \defng{Hurwitz galaxies}.

In the minimal case $g=0$, Hurwitz numbers have a product formula, and thus in~$\FP$.
The proposition is remarkable since most proofs of this formula use a
\defng{double counting argument}, see~\cite{BS}.  Notably, we recall D\'enes's
beautiful proof of \ts $h_{0\ts (n)}=n^{n-2}$ \ts formula
for the number of minimal factorizations of a long cycle.
See \cite{GY02} for a bijection in this case.

{\small

\begin{rem}\label{r:char-Hurw}
The subject of Hurwitz numbers is quite extensive and ever growing,
so giving comprehensive references is a challenge.
Let us mention that already in his original paper~\cite{Hur91},
Hurwitz gave a topological interpretation of \ts $\{h_{g\mu}\}$, see e.g.~\cite{CM16}.
Hurwitz numbers also arise in the study of branched covers of \ts $\cc\pp^1$ \ts
in Enumerative Algebraic Geometry, see~\cite{ELSV,Oko,OP}.
We refer to~\cite{BS,DPS14,PS02} for a more combinatorial treatment.
\end{rem}
}
\bigskip

\section{Kronecker coefficients}\label{s:Kron}

\subsection{Reaching for the stars} \label{ss:Kron-def}
Let \ts $g(\la,\mu,\nu)$, where $\la,\mu,\nu\vdash n$,
denote the \defn{Kronecker coefficients}:
$$
g(\la,\mu,\nu) \, := \, \<\chi^\la\chi^\mu,\chi^\nu\> \, = \, \frac{1}{n!} \.
\sum_{\si \in S_n} \. \chi^\la(\si) \. \chi^\mu(\si)\. \chi^\nu(\si)\..
$$
By definition, \ts $g(\la,\mu,\nu)\in \nn$.  Whether it has a combinatorial
interpretation remains a major open problem first posed by
Murnaghan \cite{Mur38,Mur56}.

\begin{conj}[{\rm cf.\ Problem~10 in~\cite{Sta00}}{}] \label{conj:Kron-main}
Kronecker coefficients \ts $\{g(\la,\mu,\nu)\}$ \ts are not in~$\ts\SPr$.
\end{conj}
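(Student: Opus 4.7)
The only realistic route, given the tools assembled in the paper, is to reduce the problem to Theorem~\ref{t:char-abs}: derive $(\chi)^2 \in \SPr$ from $g \in \SPr$, and then invoke that theorem to obtain the collapse $\PHr = \Sigma_2^{\textsc{p}}$. The starting point is the identity
\[
\bigl(\chi^\la(\mu)\bigr)^2 \ = \ \sum_{\nu \vdash n} g(\la,\la,\nu)\, \chi^\nu(\mu),
\]
obtained by expanding $(\chi^\la)^2 = \sum_\nu g(\la,\la,\nu)\ts\chi^\nu$ from $g(\la,\la,\nu) = \langle (\chi^\la)^2, \chi^\nu\rangle$ and evaluating at the conjugacy class of type~$\mu$.

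First, I would specialize $\mu$ to a rectangle $(k^{n/k})$. In that setting all rim-hook tableaux in $\RHT(\nu,\mu)$ share a common sign $\epsilon(\nu)\in\{\pm 1\}$ \cite{FS98,SW85}, and both $\epsilon(\nu)$ and $|\RHT(\nu,\mu)| = |\chi^\nu(\mu)|$ are computable in polynomial time (cf.~$\S$\ref{ss:char-values}). Splitting the sum over $\nu$ according to $\epsilon(\nu)$ would then express $(\chi^\la(\mu))^2$ as $A(\la,\mu) - B(\la,\mu)$ with $A,B \in \SPr$ under the assumption $g \in \SPr$. This realizes the character square as a $\GapPr$ function on rectangular classes; this is the easy half.

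The main obstacle is promoting this $\SPr$-difference to a bona fide $\SPr$ certificate: generically $\SPr \ne \GapPr$ under standard assumptions, and the identity alone does not force cancellation at the level of witnesses. My plan is to emulate the argument of \cite{IPP22} behind Theorem~\ref{t:char-abs}: combine the $\SPr$ hypothesis with the known $\NPr$-hardness of Kronecker nonvanishing to place the decision problem $\{g > 0\}$ simultaneously in $\NPr$ and in $\CeqP$, forcing $\coNPr \subseteq \CeqP$ and the promised collapse. A subtle point is that Theorem~\ref{t:char-abs} concerns $(\chi^\la(\mu))^2$ for arbitrary $\mu$, whereas the reduction above only controls rectangular $\mu$; one must either verify that the IPP22 reductions already target rectangular classes, or extend to non-rectangular $\mu$ via Murnaghan--Nakayama bookkeeping to absorb the extra signs of $\chi^\nu(\mu)$.

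As a parallel, weaker route, $g \in \SPr$ feeds into the cascade $a_{\la\,(m^k)} = p_\la((k),\rho^{(m)})$ of $\S\ref{ss:char-refine}$--$\S\ref{ss:char-plethysm}$: it would place the refined row sums in $\SPr$ and, via Proposition~\ref{prop:char-dist}, all of $\{a_\la\}$ in $\SPr$, concentrating the difficulty in Conjecture~\ref{conj:char-row-sums}. This does not close the circle unconditionally, but isolates a single bottleneck. Throughout, the character-theoretic identity is essentially for free; the genuine hard step is the $\SPr/\GapPr$ bookkeeping needed to convert the signed identity into a rigorous complexity-class containment.
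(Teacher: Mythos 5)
The statement you are attacking is posed in the paper as an open conjecture, not a theorem; there is no paper-side proof to compare against, so a complete argument here would be a genuinely new result. That said, your proposed reduction to Theorem~\ref{t:char-abs} has a gap that I do not see how to close.

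To invoke Theorem~\ref{t:char-abs} you must place $(\chi^\la(\mu))^2$ in $\SPr$ for the \emph{hard} instances $(\la,\mu)$ underlying the $\CeqP$-completeness of character vanishing in~\cite{IPP22}. Your same-sign splitting is available only when $\mu$ is a rectangle, and in precisely that case $(\chi^\la(\mu))^2=|\RHT(\la,\mu)|^2$ is already in $\SPr$ (indeed in $\FPr$) unconditionally, as $\S$\ref{ss:char-values} notes; so on rectangular classes the implication ``$g\in\SPr\Rightarrow(\chi)^2\in\SPr$'' is vacuous and yields no contradiction. The hard instances for Theorem~\ref{t:char-abs} must therefore be non-rectangular, where the sign of a rim-hook tableau depends on the tableau and not merely on $\nu$, and your clean split of $\sum_\nu g(\la,\la,\nu)\chi^\nu(\mu)$ into $A-B$ with $A,B\in\SPr$ no longer goes through. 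Even where such a split exists, a difference of $\SPr$ functions lands only in $\GapPr$, which is already known for $(\chi)^2$ via the MN~rule and does not feed the hypothesis of Theorem~\ref{t:char-abs}. Your supplementary remark that $\{g>0\}\in\NPr\cap\CeqP$ would force $\coNPr\subseteq\CeqP$ is also off target: $\coNPr\subseteq\CeqP$ holds unconditionally, and the collapse in~\cite{IPP22} needs the \emph{reverse} containment $\CeqP\subseteq\coNPr$, obtained there by placing the $\CeqP$-complete problem $\{\chi^\la(\mu)=^?0\}$ in $\coNPr$ via $(\chi)^2\in\SPr$; nothing you have derived from $g\in\SPr$ establishes that. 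Finally, the ``parallel route'' through $\S$\ref{ss:char-refine}--$\S$\ref{ss:char-plethysm} rests on an unstated reduction from Kronecker to plethysm coefficients $p_\la((k),\nu)$, which the paper does not supply, and in any case bottoms out at Conjecture~\ref{conj:char-row-sums}, itself open.
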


It is known that  \ts $\{g(\la,\mu,\nu)\}\in \GapP$, see \cite{BI08}.
This follows, for example, from
$$
g(\la,\mu,\nu) \ = \ \sum_{\om\in S_\ell} \, \sum_{\pi\in S_m} \,
\sum_{\tau\in S_r} \ \, \text{sign}(\om\ts \pi \ts \tau) \.\cdot \.
\rT\bigl(\la+(1^\ell)-\om, \. \mu+(1^m)-\pi, \. \la+(1^r)-\tau \bigr),
$$
where \ts $\ell(\la)=\ell$, $\ell(\mu)=m$, $\ell(\nu)=r$, and \ts
$\rT(\al,\be,\ga)$ \ts is the number of \defng{3-dim contingency arrays}
with $2$-dim marginals given by $(\al,\be,\ga)$, see \cite[Eq.~$(8)$]{PP17}.
The same equation implies that
\ts $\{g(\la,\mu,\nu)\}\in \FP$ \ts for partitions with bounded number of rows:
\ts $\ell,m,r = O(1)$, see
\cite{CDW12,PP17}.\footnote{When the encoding is in binary, both $\GapP$
and~$\FP$ claims remain true, but the argument now requires
\defng{Barvinok's algorithm} for counting integer points in polytopes of
bounded dimension in poly-time \cite{Bar93}. }

\smallskip

\subsection{Where to look}\label{ss:Kron-where-to-look}
There are several families of examples when Kronecker coefficients are
known.\footnote{For a quick guide to the literature, see e.g.\  the
\href{https://mathscinet.ams.org/mathscinet/search/publdoc.html?r=1&pg1=MR&s1=3650220}{MathSciNet review}
of~\cite{Bla17} by Christopher~Bowman.}
These include Blasiak's remarkable combinatorial interpretation of \ts
$\{g(\la,\mu,\nu)\}$, where \ts $\nu$ \ts is a \defng{hook} \cite{Bla17}I
(see also~\cite{Liu17}), and an $\NP$-complete combinatorial
interpretation for \defng{simplex-like triples} \.
$(\la,\mu,\nu)$ \. in~\cite[$\S$3]{IMW17}.
That contrasts with the following:

\begin{conj}\label{conj:Kron-triple}
Kronecker coefficients \ts $\{g(\la,\la,\la)\,:\,\la=\la'\}$ \ts are not in~$\ts\SPr$.
\end{conj}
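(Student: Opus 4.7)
The plan is to establish the conjecture conditionally, following the template of Theorem~\ref{t:char-abs} and the broader framework of \cite{IP22,IPP22}. Namely, the goal is to show that $\{g(\lambda,\lambda,\lambda) : \lambda = \lambda'\} \in \SPr$ would force a collapse of the polynomial hierarchy (e.g., $\PHr = \Sigma_2^{\textsc{p}}$). A useful first observation is that for $\lambda=\lambda'$, the identity $\chi^{\lambda'} = \sign \cdot \chi^\lambda$ forces $\chi^\lambda(\sigma) = 0$ on every odd $\sigma\in S_n$, so that
$$
g(\lambda,\lambda,\lambda) \, = \, \frac{1}{n!} \sum_{\sigma \in A_n} \chi^\lambda(\sigma)^3.
$$
Writing $\chi^\lambda|_{A_n} = \chi^+_\lambda + \chi^-_\lambda$ as the sum of the two irreducible $A_n$-characters into which $\chi^\lambda$ splits, this identity ties self-conjugate Kronecker coefficients directly to cubic moments of alternating-group characters, which is the structural hook I would exploit.

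Assuming the function is in $\SPr$, the associated vanishing problem $\{g(\lambda,\lambda,\lambda) =^? 0,\ \lambda=\lambda'\}$ lies in $\coNPr$, since it asks whether the witness set is empty. The plan is then to show that this vanishing problem is $\CeqP$-hard, by reducing to it a known $\CeqP$-hard vanishing problem --- most naturally, vanishing of the character square $\chi^\mu(\pi) =^? 0$ (the engine of Theorem~\ref{t:char-abs}), or vanishing of general Kronecker coefficients. Combined with the $\coNPr$-membership above, this would give $\CeqP \subseteq \coNPr$, which forces $\PHr = \Sigma_2^{\textsc{p}}$ by standard considerations, completing the argument.

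The central and hardest step is the reduction itself. Given a triple $(\alpha,\beta,\gamma) \vdash n$ (or a character evaluation $(\mu,\pi)$) for which vanishing is $\CeqP$-hard, I would construct a self-conjugate partition $\Lambda = \Lambda(\alpha,\beta,\gamma) \vdash N$ with $N = \mathrm{poly}(n)$, assembled symmetrically from $\alpha, \beta, \gamma$ together with their transposes (to enforce $\Lambda = \Lambda'$) around a large self-conjugate staircase $\delta_k$. A Murnaghan-type stability analysis adapted to this self-conjugate regime should then expand $g(\Lambda,\Lambda,\Lambda)$ as a polynomial-size signed combination of classical Kronecker coefficients of the constituents, with the hard target $g(\alpha,\beta,\gamma)$ appearing among them; varying the padding $k$ over $\mathrm{poly}(n)$-many values and inverting the resulting linear system should isolate it.

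\emph{The main obstacle} is the triple-diagonal constraint. Because the three arguments of $g(\lambda,\lambda,\lambda)$ cannot be varied independently, any symmetric assembly will produce a large collection of ``cross terms'' entangling $\alpha,\beta,\gamma,\alpha',\beta',\gamma'$; separating the hard target from these cross terms while preserving both self-conjugacy and the unary-polynomial size of $\Lambda$ is the real technical difficulty. A secondary and related worry is that the signed combination produced by the expansion naturally lives in $\GapPr$ rather than $\SPr$, so one must additionally argue that every term other than the hard target admits a combinatorial interpretation, in order that the $\GapPr \setminus \SPr$ content of the identity is concentrated on $g(\alpha,\beta,\gamma)$ alone. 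If instead one were to succeed only for a subfamily of $\lambda$ that is self-conjugate but highly structured (e.g.\ containing $\delta_k$ as a subshape), this would still give a meaningful partial result in the direction of Conjecture~\ref{conj:Kron-triple}.
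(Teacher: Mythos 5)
This statement is an open conjecture in the paper, not a theorem: the survey offers only motivation (the inability to improve the bounds \ts $1\le g(\la,\la,\la)\le f^\la$ \ts for \ts $\la=\la'$) and no proof, so there is no argument of the paper's to compare yours against. Your text is, accordingly, a research plan rather than a proof, and you correctly flag that the reduction is the unestablished core. That would already be enough to say the proposal does not settle the conjecture.

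But there is a more specific, fatal problem with the mechanism you propose. Your plan routes everything through the vanishing problem \ts $\{g(\la,\la,\la)=^?0,\ \la=\la'\}$: you want it to be \ts $\CeqP$-hard, and to combine this with its $\coNPr$-membership (which would follow from membership in $\SPr$) to collapse \ts $\PHr$. However, as the paper itself records right after the conjecture, the lower bound \ts $g(\la,\la,\la)\ge 1$ \ts holds for \emph{all} self-conjugate $\la$ (this is the result of Bessenrodt--Behns cited there). So on the family you are restricted to, the vanishing problem is the constant function ``nonzero'' and is trivially in $\poly$; it cannot be \ts $\CeqP$-hard, and no reduction of the kind you describe can exist. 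This is precisely why the vanishing-hardness machinery of \cite{IMW17,IPP22}, which powers Theorem~\ref{t:char-abs}, does not transfer to this diagonal self-conjugate family: the hardness there lives in deciding whether the quantity is zero, and here that question has a known uniform answer. Any viable attack would have to extract hardness from something other than vanishing (e.g.\ from exact values or thresholds), and your proposal does not indicate how. A secondary issue: your padding step invokes ``Murnaghan-type stability'' around a staircase, but the known stability phenomena for Kronecker coefficients concern growing the \emph{first row} (the reduced Kronecker coefficients of $\S$\ref{ss:Kron-reduced}), not staircase padding, so even the expansion you hope to invert is not supplied by existing results.

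Your opening identity is fine: for \ts $\la=\la'$ \ts one indeed has \ts $\chi^\la(\si)=0$ \ts on odd permutations, hence
$$
g(\la,\la,\la) \, = \, \frac{1}{n!}\.\sum_{\si\in A_n} \chi^\la(\si)^3\ts,
$$
but nothing in the rest of the argument uses this in a way that circumvents the obstruction above.
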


This conjecture is  motivated by our inability to improve upon basic
bounds in this case: \. $1\le g(\la,\la,\la) \le f^\la$ \ts for all \ts $\la=\la'$. Here the lower bound
is proved in \cite{BB04}, while the upper bound follows from an observation \ts $g(\la,\mu,\nu) \le f^\la$ \ts
in \cite[$\S$3.1]{PPY19}.
Even for the \defng{staircase shape} \ts $\la= (k,k-1,\ldots,1)$ \ts these remain
the best known bounds. For the \defng{square shape}, we
recently showed a lower bound $\ts g(k^k,k^k,k^k)=e^{\Omega(\sqrt{k})}$  \cite{PP22},
which is very far from the upper bound $\ts g(k^k,k^k,k^k)=e^{O(k^2\log k)} \ts$ that is conjecturally tight.

{\small
\begin{rem}  \label{r:two-row}
When $\la=\mu$ and $\nu$ is a \defng{two-row partition}, we have the following formula for Kronecker coefficients:
$$(\ddag) \qquad
g\big(\la,\la,(n-k,k)\big) \, = \, \sum_{\al\ts \vdash \ts  k} \. \sum_{\be\ts \vdash \ts  n-k} \. \bigl(c^\la_{\al\be}\bigr)^2
\, - \. \sum_{\al\ts \vdash \ts k-1} \.  \sum_{\be\ts \vdash \ts  n-k+1} \. \bigl(c^\la_{\al\be}\bigr)^2,
$$
see \cite[Lem.~3.1]{PP14}.  Consider the inequality \. ``RHS of $(\ddag)\ge 0$''.
While different from the LPP and PPY inequalities in~$\S$\ref{ss:tab-LR},
it has the same flavor and is sufficiently similar to be out of reach
by direct combinatorial tools in full generality.\footnote{When \ts $\la_1\ge 2k-1$ \ts or \ts
$\ell(\la)\ge 2k-1$, a combinatorial description of \ts $g\big(\la,\mu,(n-k,k)\big)$ \ts is given in~\cite{BO05}.
}

In a special case when \ts $\la=(m^\ell)$ \ts is a rectangle, 
the equality~$(\ddag)$  gives:
$$
g\big(m^\ell,m^\ell,(n-k,k)\big) \, = \, [q^k]\tbinom{m+\ell}{\ell}_q \. - \. [q^{k-1}]\tbinom{m+\ell}{\ell}_q\ts,
$$
see \cite{PP13,PP14} (see also~\cite[Lem.~7.5]{Val14}).
In one direction, this proves unimodality of \defng{$q$-binomial coefficients}
(see~$\S$\ref{ss:seq-unimod}).  In the other direction, this highlights the
obstacle towards a natural ``combinatorial interpretation'' of Kronecker
coefficients, since proving this unimodality by an explicit injection is famously
difficult.\footnote{With Greta~Panova, we gave a cumbersome ``combinatorial interpretation''
for \ts $g\big(m^\ell,m^\ell,(n-k,k)\big)$ \ts in terms of certain trees, see \href{https://www.math.ucla.edu/~pak/hidden/papers/Panova_Porto_meeting.pdf}{these slides}, p.~9.
The proof is obtained by recursing O'Hara's $q$-binomial identity~\cite{OH90}.}
\end{rem}
}

\smallskip

\subsection{Taking a step back}\label{ss:Kron-reduced}  Let \ts $\al,\be,\ga$ \ts
be fixed integer partitions, not necessarily of the same size.  The
\defn{reduced Kronecker coefficients} \ts $\rg(\al,\be,\ga)$ \ts are defined as
\defng{stable limits} of Kronecker coefficients when a long first row is added:
$$(\circ) \qquad \ \
\rg(\al,\be,\ga) \. := \. \lim_{n\to \infty} \. g\bigl(\al[n],\be[n],\ga[n]\bigr),
$$
where \. $\al[n]:= (n-|\al|,\al_1,\al_2,\ldots)$ \. and \ts $n\ge |\al|+\al_1\ts$,
see \cite{Mur38,Mur56}.  Here the sequence \. $\big\{g\bigl(\al[n],\be[n],\ga[n]\bigr)\big\}$ \.
is weakly increasing and stabilizes already at \ts $n\ge |\al|+|\be|+|\ga|$,
see \cite{BOR11,Val99}.  In other words, the reduced Kronecker coefficients are a special
case of Kronecker coefficients for triples of shapes with a long first row.

The problem of finding a combinatorial interpretation of the reduced Kronecker coefficients
goes back to Murnaghan and Littlewood, and has been repeatedly asked over the past decades,
see e.g.\ \cite{Kir04,Man15}.  Part of the reason is that they
generalize the LR coefficients \. $\rg(\al,\be,\ga) = c^\al_{\be\ga}$ \. for \.
$|\al| =  |\be| + |\ga|\ts,$ see~\cite{Lit58}, and thus play an intermediate role.

\begin{conj}\label{conj:Kron-redced}
The reduced Kronecker coefficients \ts $\{\rg(\al,\be,\ga)\}$ \ts are not in~$\ts\SPr$.
\end{conj}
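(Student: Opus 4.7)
The plan is to follow the template established by Theorem~\ref{t:char-abs} and the general machinery of~\cite{IP22}: namely, to show that $\{\rg(\al,\be,\ga)\} \in \SPr$ would force an unlikely collapse in the polynomial hierarchy. Before anything else, I would record the easy structural fact that $\{\rg(\al,\be,\ga)\} \in \GapPr$. This is immediate from $(\circ)$ and the paragraph after Conjecture~\ref{conj:Kron-main}: the stable limit is attained at an explicit polynomial bound $n \ge |\al|+|\be|+|\ga|$, so $\rg$ is obtained by evaluating a single ordinary Kronecker coefficient whose description has polynomial bitlength in unary.

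The central step is to establish that the positivity problem \ts $\rg(\al,\be,\ga) >^? 0$ \ts is $\coNPr$-hard, or, failing that, that the equality problem \ts $\rg(\al,\be,\ga) =^? c$ \ts is $\CeqP$-hard. Once either is in hand, a standard argument closes the conjecture: if $\rg \in \SPr$, then the positivity problem is trivially in $\NPr$ (the witness being the $\SPr$-counted object), so $\coNPr$-hardness of positivity yields $\NPr=\coNPr$, and in particular $\PHr = \Sigma_2^{\textsc{p}}$ via the argument used in $\S$\ref{ss:sub-PM} and \cite[$\S$11]{IP22}. The hardness reduction would aim to encode an instance of a $\coNPr$-complete problem (e.g.\ $\overline{\textsc{3Partition}}$ or $\overline{\textsc{SubsetSum}}$) into a triple $(\al,\be,\ga)$ so that the target instance is negative iff $\rg(\al,\be,\ga) > 0$. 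Natural vehicles for this encoding are the Murnaghan/Littlewood-type formulas writing $\rg$ as alternating sums of products of LR coefficients, together with the reduction developed for ordinary Kronecker vanishing (see \cite{IMW17}).

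The main obstacle is that the reduced setting is genuinely friendlier than the general Kronecker setting, and this makes hardness delicate. When $|\al|=|\be|+|\ga|$, one has $\rg(\al,\be,\ga)=c^\al_{\be\ga}$ and positivity is in $\poly$ by Knutson--Tao saturation and the Horn inequalities; when one of the partitions is small (e.g.\ a hook, or two-row), combinatorial interpretations are known, cf.\ Remark~\ref{r:two-row}. Thus any hardness reduction must live strictly outside these tractable corners, and must also survive the cancellation in the $\GapPr$ expressions — precisely the issue that blocks the analogous attacks for Conjecture~\ref{conj:Kron-main}.

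As a fallback, I would pursue a reduction in the other direction: show that enough ordinary Kronecker coefficients can be recovered from reduced ones by short signed combinations (using the destabilization identities in \cite{BOR11}, and the fact that $g(\al[n],\be[n],\ga[n])$ is monotone in $n$ and stabilizes). If this recovery can be organized as a $\SPr$-preserving reduction, then Conjecture~\ref{conj:Kron-redced} would follow from Conjecture~\ref{conj:Kron-main}, or, more concretely, from the hoped-for hardness of a diagonal case such as $g(\la,\la,\la)$ with $\la=\la'$ (Conjecture~\ref{conj:Kron-triple}). Either avenue leaves open the same core difficulty — controlling signs in Kronecker expansions well enough to transport $\coNPr$-hardness across the stabilization limit — and this is exactly where I expect the deepest work to be required.
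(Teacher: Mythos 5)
First, a point of order: the statement you are proving is stated in the paper as a \emph{conjecture}, and the paper offers no proof of it (indeed it remarks only that, compared to the analogous conjecture for ordinary Kronecker coefficients, this one is ``either the harder to prove or the easier to disprove''). So there is nothing in the paper to match your argument against; what you have written is a research plan, and it should be judged as such.

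As a plan, the template is the right one (it is the strategy of \cite{IPP22} for Theorem~\ref{t:char-abs}), and your preliminary observation that $\{\rg(\al,\be,\ga)\}\in\GapPr$ is correct, since $\rg(\al,\be,\ga)=g(\al[n],\be[n],\ga[n])$ for a polynomially bounded padding $n$. But the load-bearing step --- $\coNPr$-hardness of positivity, or $\CeqP$-hardness of the equality problem, for \emph{reduced} Kronecker coefficients --- is left entirely open, and it is important to be clear that this is not a routine gap. What is known (and what the paper conjectures in $\S$\ref{s:addendum}) is only $\NPr$-hardness of the vanishing problem, which is perfectly compatible with membership in $\SPr$: the LR~coefficients and the simplex-like Kronecker triples of \cite{IMW17} both have $\NPr$-hard positivity \emph{and} combinatorial interpretations. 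The character-square argument of \cite{IPP22} succeeds precisely because $\chi^\la(\mu)$ is a genuinely signed $\GapPr$ function whose sign pattern can be made to encode an arbitrary difference of independent counting-complete functions; $\rg$ is nonnegative by construction, so no such encoding is available, and you would need a fundamentally new source of hardness. Your fallback also has a sign problem: the destabilization identities of \cite{BOR11} express ordinary Kronecker coefficients as \emph{signed} combinations of reduced ones, so a $\SPr$-interpretation of $\rg$ would not transfer to one for $g$, and non-membership for $g$ would not transfer back to $\rg$. (Note also that the reduced coefficients are a subfamily of the ordinary ones, and a subfamily of a non-$\SPr$ family can still lie in $\SPr$.) In short: the proposal correctly identifies the intended proof architecture but contains no proof, and its central step is an open problem at least as hard as the conjecture itself.
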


Depending on your point of view, this conjecture is either the harder to prove
or the easier to disprove, compared to Conjecture~\ref{conj:Kron-main}.

\smallskip

{\small
\subsection{Questioning the motivation}\label{ss:Kron-motivation}
There are several traditional reasons why one should continue pursuing the
multidecade project of finding a ``combinatorial interpretation''
for the Kronecker coefficients.  Let us refute the most important of
these, as we see them, one by one.

\smallskip

\nin
{\small $(1)$} \. Estimating the Kronecker coefficients
is enormously difficult, especially getting the lower bounds.  One might argue:

\smallskip

\begin{center}\begin{minipage}{11cm}%
Having a ``combinatorial interpretation'' would be a bonanza for getting good lower
bounds on \ts $g(\la,\mu,\nu)$.
\end{minipage}\end{center}

\smallskip

\nin
Sure, quite possibly so.  But given the poor state of affairs where in most cases
we do not have \emph{any} nontrivial lower bounds obtained by \emph{any method}
(cf.~\cite{BBS21,PP20a}),
shouldn't that be a reason to \defna{not believe} in the existence
of a ``combinatorial interpretation''?

\medskip

\nin
{\small $(2)$} \. Recall the \defn{saturation property} for LR
coefficients states that \.
$c^{k\la}_{k\mu\. k\nu} >0 \ \Rightarrow \  c^{\la}_{\mu\ts \nu} >0$,
for all integer \ts $k\ge 1$.  The original proof by Knutson and Tao
\cite{KT99} crucially relies on a variation of the LR~rule.
One might argue:\footnote{This argument appears frequently throughout
the literature in different contexts.  See e.g.~\cite{Kir04,Mul11} for many
conjectured variations and generalizations of
the saturation property.}

\smallskip

\begin{center}\begin{minipage}{11cm}%
Having a ``combinatorial interpretation'' could help proving some sort of
saturation property for the Kronecker coefficients.\end{minipage}\end{center}

\smallskip

\nin
No, it will not.  First, the saturation property
fails: \ts $g(1^2,1^2,1^2)=0$ \ts while
\ts $g(2^2,2^2,2^2)=1$.  Second, Mulmuley's natural weakening of the saturation
property in~\cite{Mul11} also fails, already for partitions
with at most two rows \cite{BOR09}.  Third, even for the reduced
Kronecker coefficients, the saturation property fails: \ts $\rg(1^5,1^5,3^2)=0$ \ts while
\ts $\rg(2^5,2^5,6^2)=12$ \ts  \cite{PP20}.\footnote{This was independently
conjectured by Kirillov \cite[Conj.~2.33]{Kir04} and Klyachko \cite[Conj.~6.2.6]{Kly04}. We disprove
the conjecture in~\cite{PP20}, by providing an infinite family of
counterexamples. It is, however, concerning how little computational effort
was made to check the conjecture which fails for relatively small partitions,
yet first disproved by a theoretical argument.  Could there be more conjectures
which are not sufficiently tested?  Perhaps, the ``minor but interesting''
\defng{Foulkes plethysm conjecture} \cite[$\S$3]{Sta00}
is worth another round of computer testing, see \cite{CIM17}, as its fate
may be similar to that of a stronger \defng{Stanley's conjecture} disproved
in~\cite{Pyl04}. }

\medskip

\nin
{\small $(3)$} \. The saturation property for the LR~coefficients easily
implies that their \defng{vanishing} can be decided
in poly-time using Linear Programming \cite{DM06,MNS12}
(see also~\cite{BI13} for a faster algorithm).
One might argue:

\smallskip

\begin{center}\begin{minipage}{11cm}%
Even without the saturation property, perhaps having a
``combinatorial interpretation'' could give a complete
description or possibly even an efficient algorithm
for the vanishing of the Kronecker coefficients.
\end{minipage}\end{center}

\smallskip

\nin
No, it will not (most likely).  Here we are assuming that a
``complete description'' includes both necessary and sufficient
conditions verifiable in poly-time, which puts this problem in \ts
$\NP \cap \coNP$.  We are also assuming that an
``efficient algorithm'' is being in~$\poly$.

Now, it is \emph{already known} that the vanishing problem
for the Kronecker coefficients is $\NP$-hard \cite{IMW17},
so an efficient algorithm implies \ts $\poly=\NP$.
Similarly, if an $\NP$-hard problem is in \ts $\NP \cap \coNP$,
then \ts $\NP = \coNP$.  So unless one expects a major breakthrough
in Computational Complexity, this approach will not work.

\medskip

\nin
{\small $(4)$} \. There are obvious social aspects to problem
solving.  This is an old open problem, perhaps the oldest in
the area.  Famous people worked on it and reiterated its
importance.  One might proclaim:

\smallskip

\begin{center}\begin{minipage}{11cm}%
The victor gets the spoils.
\end{minipage}\end{center}

\smallskip

\nin
Absolutely!  But shouldn't then proving \defna{nonexistence} of a combinatorial
interpretation be just as much a ``victory'' as finding one?\footnote{For
more on this argument, see our
blog post  ``\emph{What if they are all wrong?}'' (Dec.~10, 2020),
available~at \ts \href{https://wp.me/p211iQ-uT}{wp.me/p211iQ-uT}
}

\medskip

\nin
{\small $(5)$} \. Finally, the \defng{intellectual curiosity} \ts is not to be discounted.
The problem is clearly attractive and has led to a lot of nice results
even in small special cases.  One might reasonably argue:

\smallskip

\begin{center}\begin{minipage}{11cm}%
While we may never be able to resolve the problem completely, 
many interesting results might get established along the way.
\end{minipage}\end{center}

\smallskip

\nin
Sure, of course.  But again, why limit yourselves to working only in the
positive direction?\footnote{As the renowned 19th century British philosopher
the Cheshire Cat once said, ``\emph{it doesn't much matter which
way you go}'', you will definitely get \underline{somewhere}
``\emph{if only you walk long enough}'' \cite[Ch.~VI]{Car65}.}
}


\bigskip

\section{Schubert coefficients}\label{s:Schubert}
For notation and standard results on Schubert polynomials, see \cite{Mac91}
and \cite{Man01}.  An accessible introduction to combinatorics of reduced
factorizations is given in~\cite{Gar02}, and the geometric background is
given in \cite[$\S$10]{Ful97}.  A friendly modern introduction is given
in~\cite{Gil19}.  The presentation below is self-contained but omits
the background.

\subsection{RC-graphs} \label{ss:Schu-def}
For a permutation \ts $w\in S_n$\ts, denote by $\RC(w)$ is the set
of \ts \defn{RC-graphs} (also called \defn{pipe dreams}), defined as tilings
of a staircase shape with \defn{crosses} and \defn{elbows} as in Figure~\ref{f:RC}
which satisfy two conditions:

{\small $(i)$} \, curves start in row $k$ on the left and end in column $w(k)$ on top, for all \ts $1\le k \le n$, and

{\small $(ii)$} \. no two curves intersect twice.

\nin
It follows from these conditions that every \ts $G \in \RC(w)$ \ts
has exactly \ts $\inv(w)$ \ts crosses.

For \ts $G \in \RC(w)$, denote by \ts $\bx^G$ \ts the product of \ts $x_i$'s
over all crosses \ts $(i,j)\in G$, see Figure~\ref{f:RC}.
Define
the \defn{Schubert polynomial} \ts $\Sch_w\in \nn[x_1,x_2,\ldots]$ \ts
as\footnote{The usual definition of Schubert polynomials is
algebraic, making this definition a crucial result in
the area, see~\cite{BB93}. Let us mention other combinatorial
models of Schubert polynomials: \defng{compatible sequences} \cite{BJS93}
and \defng{bumpless pipe dreams} \cite{LLS21}.  See also~\cite{GH21} for
the bijections between them.
}
$$
\Sch_w(\bx) \, := \, \sum_{G\ts\in\ts \RC(w)} \. \bx^G\ts.
$$
For example, \. $\Sch_{1432} = x_1x_2x_3+x_1^2x_3+ x_1x_2^2+x_2^2x_3+x_1^2x_2$ \.
as in the figure.  Note that Schubert polynomials stabilize when fixed
points are added at the end, e.g.\ \ts $\Sch_{1432} = \Sch_{14325}$.  Thus we
can pass to the limit \ts $\Sch_w$\ts, where \ts $w\in S_\infty$ \ts is a
permutation \ts $\nn \to \nn$ \ts with finitely many nonfixed points.


\begin{figure}[hbt]
\begin{center}
	\includegraphics[height=2.5cm]{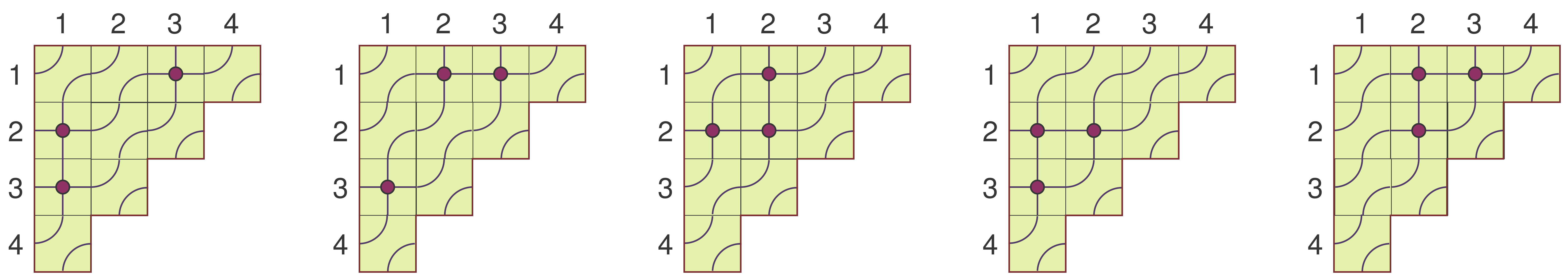}
\caption{Graphs in \ts $\RC(1432)$ \ts corresponding to the monomials \vfill
$x_1x_2x_3$\ts, \. $x_1^2x_3$\ts, \. $x_1x_2^2$\ts, \. $x_2^2x_3$ \. and \. $x_1^2x_2$\ts,
in this order.}
\end{center}
\label{f:RC}
\end{figure}

Polynomials \ts $\{\Sch_w \. : \. w\in S_\infty\}$ \ts
are known to form a basis in the ring \ts $\zz[x_1,x_2,\ldots]\ts.$
\defn{Schubert coefficients} \ts are defined as structure constants:
$$
\Sch_u \cdot \Sch_v \, = \, \sum_{w \in S_\infty} \. c^w_{uv} \. \Sch_w\ts.
$$
It is known that \ts $c^w_{uv} \in \nn$ \ts for all \ts $u,v,w\in S_\infty$\ts.

\begin{conj}\label{conj:Schu-main}
Schubert coefficients \. $\{c^w_{uv} \. : \. u,v,w\in S_\infty\}$ \ts
are not in $\ts \SPr$.
\end{conj}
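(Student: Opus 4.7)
The plan is to follow the template used in Theorem~\ref{t:char-abs} for the character square case, which is the closest analogue in the paper. First I would establish that \ts $c^w_{uv}\in\GapP$ \ts via an explicit signed combinatorial formula. This can be done in several ways, for instance by expanding the product \ts $\Sch_u\cdot\Sch_v$ \ts as a positive sum of monomials using the \ts $\RC$-graph model, and then extracting the coefficient of \ts $\Sch_w$ \ts through a signed recurrence such as iterated Monk or the transition formula. Either approach naturally presents \ts $c^w_{uv} = A(u,v,w) - B(u,v,w)$ \ts for explicit \ts $A,B\in\SP$, without providing a cancellation-free interpretation.

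Second, and this is the heart of the argument, I would aim to show that the vanishing problem \ts $c^w_{uv}=^?0$ \ts is \ts $\CeqP$-hard, strengthening the known \ts $\NPr$-hardness of Schubert positivity due to Adve--Robichaux--Yong. The idea is to encode an arbitrary $\GapP$ instance \ts $f(x)=g(x)$, where \ts $f,g\in\SPr$, into a triple of permutations \ts $(u,v,w)$, in such a way that \ts $c^w_{uv}$ becomes the difference \ts $f(x)-g(x)$ (up to a poly-time computable factor). The RC-graph model, combined with the flexibility of stabilizing permutations by appending fixed points and the availability of ``padding'' constructions in the Monk/transition recursions, should in principle allow one to embed \ts $\SP$ computations into the combinatorics of reduced pipe dreams.

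Third, once step two is in hand, the complexity-theoretic conclusion follows exactly as in Theorem~\ref{t:char-abs}. If \ts $c^w_{uv}\in\SPr$, then positivity \ts $c^w_{uv}>^?0$ \ts lies in \ts $\NPr$ \ts (the witness is any accepting branch of the verifier), so vanishing lies in \ts $\coNPr$. Combined with \ts $\CeqP$-hardness of vanishing, this yields \ts $\CeqP\subseteq\coNPr$, and then closure of \ts $\CeqP$ \ts under complementation gives \ts $\CeqP\subseteq\NPr\cap\coNPr$. By standard arguments (Toda-style), this collapses \ts $\PHr$ \ts to \ts $\Sigma_2^{\textsc{p}}$, contradicting a widely believed hypothesis.

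The main obstacle is squarely step two. The \ts $\NPr$-hardness of positivity only encodes \emph{existence} of an accepting branch, whereas \ts $\CeqP$-hardness requires matching the \emph{number} of branches on two sides of an equation. In the Schubert setting, this demands an encoding where the positive and negative terms of the signed formula are individually controllable and can be forced to coincide precisely when a chosen \ts $\CeqP$-hard predicate holds. Potential avenues include reducing from counting perfect matchings (via Schubert coefficients of Grassmannian type, exploiting the LR embedding and then breaking saturation by going outside the Grassmannian regime), or leveraging a reduction from the Kronecker non-vanishing problem should a Schubert-to-Kronecker specialization become available. Either route will require substantial new combinatorial input, which is why Conjecture~\ref{conj:Schu-main} is stated as a conjecture rather than a theorem.
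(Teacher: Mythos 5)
The statement you are addressing is stated in the paper as a conjecture, and the paper offers no proof of it; so the honest assessment is that your text is a research plan rather than a proof, and you concede as much in your final paragraph. Your step one is fine and matches what the paper actually establishes: Proposition~\ref{ss:Schu-Kostka} (due to Morales) shows \ts $\{c^w_{uv}\}\in\GapPr$ \ts via the Postnikov--Stanley identity, and the paper's footnote notes that Monk's rule gives the same conclusion, exactly as you suggest. Your step three is the right template, borrowed from Theorem~\ref{t:char-abs}, although the intermediate claim that \ts $\CeqP$ \ts is closed under complementation is not something you can invoke; the actual argument in \cite{IPP22} concludes \ts $\coNPr=\CeqP$ \ts directly and then collapses \ts $\PHr$ \ts by a Toda-style argument, so you should route the deduction that way rather than through a closure property that is not known to hold.

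The genuine gap is your step two, and it is worse than you acknowledge. You write that you would ``strengthen the known \ts $\NPr$-hardness of Schubert positivity due to Adve--Robichaux--Yong,'' but the paper states the opposite: the vanishing problem \ts $\{c^w_{uv}>^?0\}$ \ts is \emph{not} known to be \ts $\NPr$-hard, and even \ts $\SPr$-hardness of \ts $\{c^w_{uv}\}$ \ts itself is open (the paper explicitly flags an erroneous claim in the literature that conflated input sizes). So the base result you propose to strengthen does not exist, and your target --- \ts $\CeqP$-hardness of vanishing --- is strictly harder still. The paper's own methodology (Section~\ref{s:no}) for results of this type requires exhibiting the \ts $\GapPr$ \ts decomposition \ts $c^w_{uv}=A-B$ \ts with \ts $A,B$ \ts \emph{independent} and \emph{counting-complete} in the sense of \ts $\CCF$; for the character square this was achieved in \cite{IPP22} by locating specific families \ts $(\la,\mu)$, but no analogous families of permutation triples are known, and neither the RC-graph padding nor a hypothetical Schubert-to-Kronecker specialization that you mention has been constructed. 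Until that combinatorial input exists, the chain of implications in your step three has nothing to hang on, which is precisely why the statement remains a conjecture.
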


For \defn{Grassmannian permutations} (permutations with one descent),
Schubert polynomial coincide with Schur polynomials, so Schubert coefficients
generalize LR~coefficients.  This is a starting point of a number of further
generalizations, see \cite{Knu16,Kog01,MPP14}.

\smallskip

\subsection{Schubert--Kostka numbers} \label{ss:Schu-Kostka}
For a permutation \ts $w\in S_n$ \ts and an integer vector \ts $a\in \nn^n$,
the \defn{Schubert--Kostka number} \ts $K_{w\ts a} := [\bx^a] \Sch_w$ \ts is
the coefficient of a monomial in the Schubert polynomial.  By definition,
\ts $\{K_{w\ts a}\} \in \SP$.

\begin{prop}[{\rm Morales\footnote{Alejandro H.~Morales, personal communication (2016).}}{}]
Schubert coefficients
$\big\{c^w_{uv}\.: \.  u,v,w\in S_\infty\big\}$ \ts are in \ts $\GapPr$.
\end{prop}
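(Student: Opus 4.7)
The plan is to turn $\Sch_u \Sch_v$ into a signed Schubert expansion by first expanding $\Sch_v$ in monomials (via RC-graphs) and then processing each monomial factor with iterated Monk's rule. The resulting formula presents $c^w_{uv}$ as a signed sum of polynomial-time verifiable witnesses, which is exactly a $\GapPr$ interpretation.

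First, I would recall Monk's rule: $\Sch_u \cdot (x_1 + \cdots + x_r) = \sum_{t} \Sch_{ut}$, the sum running over transpositions $t = (a,b)$ with $a \le r < b$ and $\ell(ut) = \ell(u) + 1$. Writing $x_r = (x_1 + \cdots + x_r) - (x_1 + \cdots + x_{r-1})$ and subtracting gives
\[
x_r \ts \Sch_u \, = \, \sum_w \sigma^w_{u,r} \ts \Sch_w, \qquad \sigma^w_{u,r} \in \{-1,\, 0,\, +1\},
\]
with $\sigma^w_{u,r} = +1$ if $w = u \cdot (r,b)$ for some $b > r$, $\sigma^w_{u,r} = -1$ if $w = u \cdot (a,r)$ for some $a < r$ (in both cases requiring $\ell(w) = \ell(u)+1$), and $0$ otherwise. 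These signs are clearly poly-time computable from $(u, r, w)$.

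Next, fix a canonical linear order on the factors of any monomial $\bx^a$ of degree $k$, writing it as $x_{r_1} x_{r_2} \cdots x_{r_k}$. Iterating the displayed formula,
\[
\bx^a \ts \Sch_u \, = \, \sum_w \mu^w_{u,a} \ts \Sch_w, \qquad
\mu^w_{u,a} \, = \sum_{u = w_0 \to w_1 \to \cdots \to w_k = w} \, \prod_{i=1}^k \sigma^{w_i}_{w_{i-1},\,r_i},
\]
the outer sum ranging over saturated chains in the (strong) Bruhat order. Each chain is a polynomial-size object with sign $\pm 1$, so $\mu^w_{u,a}$ is visibly in $\GapPr$ as a function of $(u, a, w)$.

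Combining with the RC-graph expansion $\Sch_v = \sum_{G \in \RC(v)} \bx^G$ from $\S$\ref{ss:Schu-def} and extracting the coefficient of $\Sch_w$,
\[
c^w_{uv} \, = \, \sum_{G \ts \in \ts \RC(v)} \mu^w_{u,\,\bx^G} \, = \, \sum_{(G,\ts P)} \operatorname{sign}(G, P),
\]
where a witness is a pair consisting of an RC-graph $G \in \RC(v)$ and a Bruhat chain $P\colon u \to \cdots \to w$ of length $\ell(v)$ whose step labels are read off the crosses of $G$ in the fixed canonical order. Validity of $(G, P)$ and its sign are poly-time checkable, so splitting by sign gives $c^w_{uv} = N_+ - N_- \in \GapPr$. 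The only subtlety, which I do not expect to present any real obstacle, is pure bookkeeping: one must fix a canonical reading order on the crosses of $G$ once and for all, and note that chains always have length exactly $\ell(v)$ since $c^w_{uv} = 0$ unless $\ell(w) = \ell(u) + \ell(v)$ by homogeneity of Schubert polynomials.
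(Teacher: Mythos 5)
Your argument is correct, and it is precisely the alternative route that the paper relegates to a footnote (``One can also use Monk's rule\dots We thank Alex Yong for this observation''); the paper's actual proof is different. The paper quotes the Postnikov--Stanley identity
$c^w_{uv} = \sum_{\si\in S_n} \sign(\si) \sum_{a+b+c=\si\rho_n} K_{u\ts a} K_{v\ts b} K_{w\ts c}$,
so its witnesses are a permutation $\si$ together with a compatible triple of RC-graphs for $u,v,w$, and the sign is just $\sign(\si)$; the whole proof is a one-line citation. Your route instead takes witnesses to be a pair consisting of an RC-graph $G\in\RC(v)$ and a signed saturated Bruhat chain from $u$ to $w$ whose step labels are read off the crosses of $G$, with the sign a product of local signs $\pm1$ coming from $x_r\ts\Sch_u=\sum_{b>r}\Sch_{u(r,b)}-\sum_{a<r}\Sch_{u(a,r)}$. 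Both yield membership in $\GapPr$; yours is more self-contained (only Monk's rule is needed, and each sign is locally checkable along the chain), while the paper's is shorter and puts $u,v,w$ on a more symmetric footing. Two small points you wave at but should pin down: the well-definedness of $\sigma^w_{u,r}\in\{-1,0,+1\}$ (no $w$ can arise as both $u(r,b)$ and $u(a,r)$, since that would force $a=b$), and the fact that intermediate permutations in a chain stay inside $S_{n+O(\ell(v))}$ (for $u\in S_n$ the covers $u(r,b)$ with $b$ large are killed by the length condition), so chains really are of polynomial size. Neither is an obstacle.
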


\begin{proof} Let \ts $\si\in S_n$ \ts and let \ts $\rho_n:=(n-1,\ldots,1,0)\in \nn^n$.
Define
$$\Om(\si) \. := \. \big\{(a,b,c)\in (\nn^n)^3\.: \.  a+b+c=\si\rho_n\big\}.
$$
It was shown by Postnikov and Stanley in \cite[Cor.~17.13]{PS09}, that
$$c^w_{uv} \, = \, \sum_{\si\in S_n} \, \sum_{(a,b,c)\.\in\. \Om(\si)} \,
\sign(\si) \. K_{u\ts a} \. K_{v\ts b}\. K_{w\ts c}\..
$$
Separating positive and negative signs shows that \ts $\{c^w_{uv}\}\in \SP-\SP$,
as desired.\footnote{One can also use \defng{Monk's rule} (see e.g.~\cite{Gil19}),
to obtain the same result. We thank Alex Yong for this observation. }
\end{proof}

Following \cite[$\S$17]{PS09}, define the
\defn{Schubert--Kostka matrix} \ts $\wh\bK=\bigl(K_{u\ts a}\bigr)$,
which naturally generalizes the Kostka matrix \ts $\bK=\bigl(K_{\la\mu}\bigr)$.
Similarly, define the inverse \ts ${\wh\bK}^{-1}=\bigl(K^{-1}_{a \ts u}\bigr)$ \ts which
generalizes the inverse Kostka matrix \ts $\bK^{-1}=\bigl(K^{-1}_{\la\mu}\bigr)$.

\begin{prop}
The inverse Schubert--Kostka numbers
$\big\{K^{-1}_{a \ts u}\.: \. a\in \nn^\infty, \. u\in S_\infty\big\}$ \ts are in \ts $\GapPr$.
\end{prop}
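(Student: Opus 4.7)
The plan is to express $K^{-1}_{au}$ via divided difference operators, which naturally realizes it as a signed count sitting inside $\GapPr$. For $u \in S_\infty$ with reduced word $u = s_{i_1} \cdots s_{i_\ell}$, set $\partial_u := \partial_{i_1} \cdots \partial_{i_\ell}$, where $\partial_i f := (f - s_i f)/(x_i - x_{i+1})$. By the standard theory (see e.g.\ \cite{Man01}), $\partial_u$ is independent of the chosen reduced word, $\partial_u \Sch_v = \Sch_{vu^{-1}}$ when $\ell(vu^{-1}) = \ell(v) - \ell(u)$, and is $0$ otherwise. Since $\Sch_e = 1$ while $\Sch_w|_{\bx = 0} = 0$ for every $w \ne e$, applying $\partial_u$ to the identity $\bx^a = \sum_v K^{-1}_{av} \Sch_v$ and evaluating at $\bx = 0$ yields
$$
K^{-1}_{au} \, = \, \bigl[\partial_u(\bx^a)\bigr]\big|_{\bx = 0}.
$$

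Next I would unpack the right-hand side combinatorially. On any monomial $\bx^b$, the operator $\partial_i$ either vanishes (when $b_i = b_{i+1}$) or expands as $\sign(b_i - b_{i+1})$ times a sum of at most $|b_i - b_{i+1}|$ monomials of degree $|b| - 1$. Iterating along $s_{i_1} \cdots s_{i_\ell}$ thus defines a rooted tree of \emph{computation paths}: starting at $\bx^a$, at each step one picks a summand from the expansion at the current node, and signs multiply along the path. Each path $P$ has length exactly $\ell(u) \le \binom{n}{2}$, ends at a monomial $\bx^{b(P)}$ of degree $|a| - \ell(u)$, and carries a sign $\sigma(P) \in \{\pm 1\}$. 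Then $\bigl[\partial_u(\bx^a)\bigr]|_{\bx=0}$ equals the signed count of paths with $b(P) = 0$. Since $a$ is encoded in unary, $|a|$ is $\mathrm{poly}(n)$, so each path admits a polynomial-length encoding (a sequence of $\ell(u)$ choices from menus of size at most $|a|+1$), and both well-formedness of the path and the condition $b(P) = 0$ are checkable in polynomial time. Letting $f^{\pm}(a,u)$ count paths of sign $\pm 1$ with $b(P) = 0$, we obtain $f^{+}, f^{-} \in \SPr$ and $K^{-1}_{au} = f^{+}(a,u) - f^{-}(a,u) \in \GapPr$.

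The main obstacle will be purely bookkeeping: I must check that path descriptions stay polynomially bounded, which is automatic because $\partial_i$ lowers total degree by exactly $1$ and branches into at most $|a|+1$ summands, so no exponential blowup of intermediate data occurs. A parallel, more algebraic route would mimic the proof of the previous proposition by deriving a Postnikov--Stanley-type formula of the form $K^{-1}_{au} = \sum_\sigma \sign(\sigma) \cdot N(a, u, \sigma)$ with $N \in \SPr$, obtained by pairing $\bx^a = \sum_v K^{-1}_{av}\Sch_v$ against the basis dual to Schubert polynomials in the coinvariant algebra; this would give the same conclusion via different bookkeeping.
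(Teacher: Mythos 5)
Your argument is correct, and it reaches the conclusion by a genuinely different route than the paper. The paper's proof is a one-line citation of Postnikov--Stanley \cite[Prop.~17.3]{PS09}, which expands \ts $\bx^a=\sum_u K^{-1}_{a\ts u}\ts\Sch_u$ \ts by iterating Monk's rule, so that \ts $K^{-1}_{a\ts u}$ \ts becomes a signed sum over saturated chains in the Bruhat order from $e$ to $u$ (sequences of $|a|$ transpositions with prescribed positions and signs); splitting the chains by sign gives two $\SPr$ functions and hence membership in $\GapPr$. You instead extract the coefficient ``from the top'' via \ts $K^{-1}_{a\ts u}=\bigl[\partial_u(\bx^a)\bigr]\big|_{\bx=0}$ \ts and count signed computation paths of monomials under the divided differences; the identity, the expansion of $\partial_i$ on a monomial into exactly $|b_i-b_{i+1}|$ monomials of coefficient $\pm1$, and the polynomial bounds on path length and menu size are all right, so the two $\SPr$ functions $f^{\pm}$ are legitimate and the conclusion follows. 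The two models are dual implementations of the same strategy (find a signed combinatorial formula, separate by sign): Monk's rule builds $\bx^a$ up from $\Sch_e$, while $\partial_u$ tears it down to the constant term. Your approach has the mild advantage of being self-contained modulo standard facts about divided differences; the paper's has the advantage that the witnesses are classical objects (Bruhat chains) already studied in \cite{PS09}. Two small points to tighten: the set of computation paths depends on the chosen reduced word of $u$, so the verifier must fix a canonical one (e.g.\ the lex-least reduced word, computable greedily in polynomial time); and your closing ``alternative, more algebraic route'' is in fact precisely the proof the paper gives.
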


Similarly to the previous proposition,
the result follows directly from the identity in \cite[Prop.~17.3]{PS09}.
One can ask if the absolute values $\big\{\big|K^{-1}_{a \ts u}\big|\big\}$ \ts
are in~$\SP$.  Conjecture~\ref{conj:tab-inverse-Kostka} implies that the answer is negative,
but perhaps this more general problem is an easier place to start.

\smallskip

{\small

\subsection{Has the problem been resolved?}\label{ss:Schu-solved}
There are two issues around Conjecture~\ref{conj:Schu-main} worth mentioning,
as both, in different ways, suggest that the conjecture has already been
resolved in the negative (i.e.\ a combinatorial interpretation has already
been found).

\smallskip

\underline{First}, Izzet Coskun in~\cite{Cos+} claimed to have completely
resolved the problem of finding combinatorial interpretations for
Schubert coefficients\footnote{This paper is undated, but cited already in~\cite{CV09}.}
using the technology of \defng{Mondrian tableaux}.\footnote{These are aptly
named after a Dutch painter Piet Mondrian (1872--1944), who developed his
signature style in the ``tableau'' series in 1920s, and did not live to see his work's
influence in Schubert calculus.}
Earlier, he used Mondrian tableaux to give a combinatorial interpretation for
\defng{step-two Schubert coefficients} (corresponding to permutations with at
most two descents) in~\cite{Cos09} extending Vakil's earlier work~\cite{Vak06},
see a discussion in~\cite{CV09}.

Unfortunately, paper~\cite{Cos+} has not been peer reviewed and has been
largely ignored by the community (see \cite{Bil21} for a notable
exception).\footnote{We are baffled by the
author's continuing claim that the paper is ``currently under revision''.
We are equally baffled by unwillingness of the experts in the area to go on record stating
whether this work is incorrect, and to provide a counterexample if available.  }
We should mention that the state of art recent work \cite{KZ17} gives a tiling
combinatorial interpretation for the \defng{step-three Schubert coefficients}.
It seems, we are nowhere close to resolving Conjecture~\ref{conj:Schu-main}
in full generality.

\smallskip

\underline{Second}, Sara Billey suggested in~\cite{Bil21}, that Schubert coefficients
\emph{already have} a ``combinatorial interpretation'', since by definition they
are equal to the number of irreducible components in certain intersections of
three Schubert varieties,\footnote{Equivalently,
this is the number of points in a generic intersection of three Schubert varieties.}
and thus ``they already count something''.  Can one create a $\SP$ function out of this
definition?

While it is true that Schubert coefficients count the number of certain points in~$\cc^n$,
these points are not necessarily rational.  In fact, they are usually roots of a large
system of rational polynomials. On the other hand, Billey and Vakil prove in~\cite[$\S$4]{BV08},
that there are some remarkable pathologies for these intersections related to realizability and
stretchability of pseudoline arrangements.  It follows from the
\defng{Mn\"ev universality theorem}~\cite{Mnev88} (see also~\cite{Shor91}),
that these problems are \defng{\textsf{$\exists\rr$}-complete}.\footnote{See e.g.~\cite{Scha10}
for a computational complexity overview of the \defng{existential theory of the reals} (\textsf{$\exists\rr$}),
and connections to Mn\"ev's theorem.}

The \ts \textsf{$\exists\rr$} \ts complexity class is in \ts {\sf PSPACE} \ts
and not expected to have polynomial size verifiers.  This suggests
that in the worst case, Billey's approach needs a superexponential precision with
which one would want to compute the intersection points (i.e.\ the floating point
computation needs superpolynomially many digits), implying that it is unlikely
that there exists a poly-time verifier in this case.

The issue is not too different from that of Kronecker
coefficients, which can be written as \.
 $g(\la,\mu,\nu)=\dim\big(\SS^\la \otimes \SS^\mu \otimes \SS^\nu\big)^{S_n}$.
 Thus, one can argue that they \emph{count basis vectors} in this space of invariants.
 But since the Kronecker coefficients can be exponential in~$n$, see e.g.~\cite{PPY19},
 how would one present such a basis so it can be verified (or at least \emph{read})
 in poly-time?\footnote{We thank Greta Panova for suggesting this comparison (personal
 communication).}
}

\smallskip

{\small

\subsection{Elements of style}\label{ss:Schu-style}
There are both cultural and mathematical reasons why the type of issues
we discuss in~$\S$\ref{ss:Kron-motivation} do not apply to the study
of Schubert coefficients,\footnote{For the generalized saturation
problem in this case, see~\cite{ARY13} (see also~\cite[$\S$7]{Buch02}).}
so let us quickly address the differences.

Schubert polynomials were originally introduced by Lascoux and
Sch\"{u}tzenberger in 1980s in the geometric context, see \cite{Las95}.
They remain deeply entrenched in the area which offers a melange of
tools nonexistent on the combinatorial and representation theoretic side.
Arguably, this resulted in a deeper study with a long series of
achievements, too long to outline in this survey
(see e.g.\ \cite{Knu22}).

By now, the Schubert theory outgrew the ``combinatorial
interpretation issue'' as its raison d'\^{e}tre.  Arguably, the area was
always about developing the theory to understand the geometry of flag varieties
from a combinatorial point of view (cf.~\cite{LS85}).

\smallskip

This brings us to the curious case of \defng{puzzles}, a type
of tilings on a triangular lattice with labeled edges (see e.g.\
\cite{Knu16}).\footnote{For other models and combinatorial interpretations
in this context, see an overview in \cite[$\S$1.3]{RYY22}.}  Aesthetically
pleasing for sure, is there some additional value such combinatorial
interpretation bring to the study?

To us, the answer is a mixture.  On the one hand, when the set of tiles
is finite they clearly define a combinatorial
interpretation.\footnote{Occasional infinite sets tiles can be reduced
to a finite set by breaking them apart and adding colors.} Having a nice
or highly symmetric set of tiles can be convenient to prove structural
results about vanishing and other properties of the numbers, cf.~\cite{KT99}.

On the other hand, there is nothing surprising in the labeled tiles
from the computational point of view.  By adding new labels, one can
always break them into single triangles, which can then be viewed
as a variation on \defn{Wang tiles} \ts \cite{Wang61}.  The latter
can also be broken into triangles leading to the same model of
computation.  Wang's celebrated insight is that such tilings are
just as powerful as Turing machines.  In fact, the example of
Wang tilings with the boundary was one of Levin's six original
$\NP$-complete problems \cite{Lev73}.

And yet, there is a clear computational difference between
various types of increasing tableaux (such as
\defng{set-valued tableaux}) vs.\ puzzles.  While the former
need some memory to check which numbers appear, the latter can
be verified by checking only local label conditions (think of
\defng{finite state automata}), making the puzzles (slightly)
less powerful as a computational model.\footnote{Note that
giving a combinatorial interpretation in a weaker computational
model is a \emph{stronger} result.}  Part of the weakness here comes
from the fixed triangular region space requirement, further constraining
the puzzle tiling model.\footnote{In \cite{GP14}, we exploited this model by
constructing sets of Wang tiling which simulated several classical
sequences such as the Catalan numbers. Our model uses a large
number of square tiles, just like \cite{KZ17} which uses 151 triangular
pieces.}
}

\bigskip


\section{The magic of the symmetric group}\label{s:Sym}

Before we can explain why, in our view, so many numbers in Algebraic Combinatorics
are not in $\SP$, 
we need to deconstruct the wonderful world of the symmetric group.  Only after
reducing all the great features to just one, we can fully appreciate its power
as well as its limitation.

\smallskip

\subsection{Pruning down the list}\label{ss:Sym-basic}
We start with representation theoretic point of view, demystifying
some of the magic.  All these results are routine and well known,
so we restrict ourselves to quick pointers to the literature.

\medskip

\nin
{\small $(a)$} \, \defn{$f^\la\. | \. n!$}  \quad
This is non-specific to $S_n$, since the dimension \ts $\chi(1)$ \ts of an
irreducible character of a finite group~$G$ always divides the order~$|G|$,
see e.g.\ \cite[$\S$6.5]{Ser77}.

\smallskip

\nin
{\small $(b)$} \, \defn{$\{f^\la\} \in \SPr.$} \quad
This is a consequence of \ts $\chi^\la\downarrow^{S_n}_{S_{n-1}}$ \ts having a \defng{simple
spectrum}, i.e.\ multiplicities at most one. Irreducibles in the restriction
(given by the \defng{branching rule}) correspond to removing a corner from
Young diagram~$\la$.  Iterating the restrictions along the subgroup chain \ts
$S_n \sss S_{n-1} \sss \ldots \sss S_1$ \ts  implies
that \ts $f^\la=|\SYT(\la)|$,  cf.\ \cite{OV96}.

\smallskip

\nin
{\small $(c)$} \, \defn{$\sum_{\la\vdash n} \. (f^\la)^2=n!$} \quad This is non-specific to $S_n$.
\defng{Burnside's identity} \. $\sum \chi(1)^2 = |G|$ \. holds for all finite groups,
cf. Remark~\ref{rem:char-identities}.

\smallskip

\nin
{\small $(d)$} \, \defn{$\chi^\la(\mu)\in \zz$.} \quad This is follows from a property of conjugacy classes
of~$S_n$, that \ts $\si^k\in [\mu]$ \ts for every \ts $\si\in [\mu]$ \ts and \ts $(k,\text{ord}(\si))=1$,
see e.g.\ \cite[$\S$13.1]{Ser77}. See also $\S$\ref{ss:char-sums} for references to row and column sums
of character tables of general finite groups.

\smallskip

\nin
{\small $(e)$} \, \defn{$\{$Irreps of $S_n\}$ \. $\longleftrightarrow$ \. \{Irreps of $\GL(N)$\}.} \quad
This is a direct consequence of the \defng{Schur--Weyl duality}: \. $S_n \times \GL(N)$ \ts act
on \ts $(\cc^N)^{\otimes n}$ \ts and the action has simple spectrum \ts $\SS^\la \otimes V_\la$ \ts of
tensor products of corresponding irreps, see e.g.\ \cite[$\S$6.2]{FH99}.

\smallskip

\nin
{\small $(f)$} \, \defn{$\{K_{\la\mu}\} \in \SPr$.} \quad  This is a direct consequence of the
\defng{highest weight theory} for \ts $\GL(N)$.  Indeed, the induced product for $S_n$ reps corresponds to
the tensor product of $\GL(N)$ reps, via~$(e)$.  Now use the definition \.
$K_{\la\mu} = \big\<\chi^\la, 1\uparrow_{S_{\mu_1}\times S_{\mu_2}\times \cdots}^{S_n}\big\>$.

\smallskip

\nin
{\small $(g)$} \, \defn{$\{f^\la\} \in \FPr$.} \quad This extension of {\small $(b)$}
is a consequence
of \ts dimensions \ts $\dim(V_\la)$ \ts of $\GL(N)$ \ts irreps given by a product.
The latter is computed as a ratio of two \defng{Vandermonde determinants}.
The product formula for \ts $f^\la=\dim(\SS^\la)$ \ts follows by taking the limit,
see e.g.\ \cite[$\S$7.21]{Sta99}.\footnote{If you are unsurprised by this result,
try answering if dimensions of \ts Sp$(2n,q)$ \ts irreps are in~$\FP$.  What about
other simple groups of Lie type?  Let me know what you figure out.  }

\medskip

\subsection{Hook-length formula}\label{ss:Sym-accidents}
The results above are both fundamental in the area and have conceptual proofs
explaining away some of the magic.  The \defn{hook-length formula} (HLF) (see~$\S$\ref{ss:tab-SYT})
may also seem fundamental at first, until one realizes that all proofs are a byproduct of
calculations highly specific to symmetric groups.  Here is
a quick overview of the proofs:

\smallskip

$\circ$ \ Direct cancelation proof via the product formula in
$\S$\ref{ss:Sym-basic}{\small \ts $(g)$}, see the original proof

\hskip.45cm in~\cite{FRT54}, see also \cite[$\S$7.21]{Sta99}.

\smallskip

$\circ$ \ \defng{NPS bijection} \cite{NPS97}, an intricate argument seemingly based
on the \defng{jeu-de-taquin},

\hskip.45cm but neither using its properties nor originally
motivated by it  (see e.g.\ \cite[$\S$3.10]{Sag01}).

\smallskip

$\circ$ \ \defng{Hillman--Grassl bijection} \cite{HG76}, see also~\cite{Krat99} (plus a limit argument, see e.g.~\cite{Pak01}),

\hskip.45cm an elegant bijection that is equivalent to RSK, see \cite{Gan81}, \cite[$\S$6.5]{PV10} and \cite[$\S$5]{VW83}.

\smallskip

$\circ$ \ \defng{Geometric bijection} (plus a limit argument) in \cite{Pak01}, obtained as a deconstruction

\hskip.45cm of RSK via
local transformations (cf.\ \cite{Hop14}).

\smallskip

$\circ$ \ \defng{Lagrange interpolation} inductive arguments, see \cite{Ban08,GN04,Kir92,Ver92}.  These

\hskip.45cm are elementary analytic proofs which require the most calculation.

\smallskip

$\circ$ \ \defng{GNW hook walk} \cite{GNW79}, a ingenuous argument which embeds the Lagrange inter-

\hskip.45cm polation inductive argument in probabilistic disguise as shown by Kerov~\cite{Ker93}.

\smallskip

$\circ$ \ Inductive proof via bijection of the \defng{branching rule} in~\cite{CKP11}, obtained as a

\hskip.45cm combinatorial deconstruction of the GNW hook walk argument, cf.\ \cite{Zei84}.

\smallskip

$\circ$ \ \defng{Naruse HLF} \ts \cite{MPP17,MPP18a}, an advanced generalization of the HLF which
easily

\hskip.45cm
implies it (see also \cite{Kon20}).  All currently known proofs are technically involved.

\smallskip

\nin
Neither of these proofs explains the HLF on a deeper level, since each of them either has
a substantive computational part, or outsources the computation to RSK and its relatives.

\medskip

\subsection{Robinson--Schensted--Knuth correspondence}\label{ss:Sym-RSK}
We argue that the \defn{RSK} \ts is the one true miracle in the area.
Let us count some of the ways it emerges, in historical order:

\smallskip

$\circ$ \ The (original) \defng{Robinson--Schensted algorithm}, later extended by Knuth~\cite{Knu70}.

\smallskip

$\circ$ \ The \defng{Burge correspondence} \cite{Bur74} (see also \cite[$\S$A.4.1]{Ful97}).

\smallskip

$\circ$ \ The \defng{Hillman--Grassl correspondence} \cite{HG76} (see above).

\smallskip

$\circ$ \ Sch\"{u}tzenberger's \defng{jeu-de-taquin} \cite{Sch77} (see also \cite[$\S$3.7]{Sag01}).

\smallskip

$\circ$ \ Viennot's \defng{geometric construction} \cite{Vie77} (see also \cite[$\S$3.6]{Sag01}).

\smallskip

$\circ$ \ Quantum version of D\'esarm\'enien's \defng{straightening algorithm} \cite{Des80} given in \cite{LT96}.

\smallskip

$\circ$ \ Steinberg's \defng{unipotent variety} approach \cite{Ste88} (see also \cite{vL00}).

\smallskip

$\circ$ \ Fomin's \defng{growth diagrams} approach \cite{Fom95}  (see also \cite[$\S$7.13]{Sta99}).

\smallskip

$\circ$ \ Benkart--Sottile--Stroomer \defng{tableau switching} \cite{BSS96}.

\smallskip

$\circ$ \ Our \defng{geometric bijection} \cite{Pak01} (see above).

\smallskip

$\circ$ \ Lascoux's \defng{double crystal graphs} version \cite{Las03}.

\smallskip

$\circ$ \ The \defng{octahedral map} \cite{KTW04,DK05b}
(see also \cite{DK08,HK06,PV10,Spe07}).

\smallskip

\nin
There are many more versions of RSK --- these are just the ones we find most
interesting.  There are also numerous extensions, generalizations and applications
both in and outside of the area.  Let us now clarify some items on this list.

Some of the bijections above, such as the geometric construction by Viennot
and the double crystal graphs by Lascoux,
are restatements of RSK into a different language.
Several others require a serious proof that they coincide with RSK, e.g.\
the Hillman--Grassl correspondence and the jeu-de-taquin.  In a difficult case,
D\'esarm\'enien's straightening algorithm is similar but not equal to RSK, but the
quantum versions do coincide.\footnote{This similarity puzzled Gian-Carlo Rota,
who asked for years to find
a connection, see e.g.\ \cite{BT00}.  The mystery was eventually resolved by
Leclerc and Thibon in \cite{LT96}, who noticed that sometimes the leading
coefficient in the straightening is ``incorrect'' because of the cancellation
which disappear when \ts $q \ne 1$.  We recall Rota's great joy upon learning
of this discovery.  See also \cite{GL21} for another closely related
connection. }

In \cite{PV10}, we set up a technology which allow one to prove that various
maps in the area are in fact linear time reducible to and from the RSK.
This allows us to put the \defng{Sch\"utzenberger involution}, the tableau switching
and the octahedral recurrence on the list.

\smallskip

Now is the time to ask a key question:  \defna{Given so many different approaches to so many
different problems, why do they result in the same bijection?}  There is a formal mathematical reason to
explain coincidences between any two RSK appearances.  The explanations could be algebraic or
combinatorial, but neither would resolve a question given the multitude of instances.

The answer is simple and unambiguous.  It does not really matter what is the deep reason
behind the RSK, whether it is the \defng{highest weight theory}, the straightening algorithm or
something else.  What matters is that each RSK appearance is a shadow of one
fundamental result that is yet to be formalized. This suggests we treat RSK as
\defna{the one true miracle} \ts which causes much of what we consider magical
about the symmetric group.

\smallskip

\subsection{LR~rule}\label{ss:Sym-LR}
Consider the \defng{Littlewood--Richardson coefficients}
and its many combinatorial interpretations (see e.g.\ \cite{vL01} for an extensive albeit dated survey):

\smallskip

$\di$ \ The \defng{original LR rule}: \.  $c^\la_{\mu\nu} = |\LR(\la/\mu,\nu)|$, see \cite{LR34}.

\smallskip

$\di$ \ The \defng{LR variation}: \. $c^\la_{\mu\nu} = |\LR(\mu\circ \nu,\la)|$, see e.g.\ \cite{RW84}.

\smallskip

$\di$ \ James--Peel \defng{pictures}  \cite{JP79}, see also~\cite{CS84,Zel81}.

\smallskip

$\di$ \ Gelfand--Zelevinsky interpretation using \defng{Gelfand--Tsetlin patterns}  \cite{GZ85}.

\smallskip

$\di$ \ Leaves of the \defng{Lascoux--Sch\"utzenberger 
tree} \cite{LS85}.

\smallskip

$\di$ \ Kirillov--Reshetikhin \defng{rigged configurations} \cite{KR88} (see also \cite{KSS02}).

\smallskip

$\di$ \ Berenstein--Zelevinsky \defng{triangles} \cite{BZ92}.  

\smallskip

$\di$ \ Fomin--Greene \defng{good maps} \cite{FG93}.

\smallskip

$\di$ \ Nakashima's interpretation using \defng{crystal graphs} \cite{Nak93} (see also \cite[$\S$9]{BS17}).

\smallskip

$\di$ \ Littelmann's \defng{paths} \cite{Lit94}.

\smallskip

$\di$ \ Knutson--Tao \defng{hives} \cite{KT99}, see also~\cite{GP00}.

\smallskip

$\di$ \ Kogan's interpretation using \defng{RC-graphs} \cite{Kog01}.

\smallskip

$\di$ \ Buch's \defng{set-valued tableaux} \cite{Buch02}.

\smallskip

$\di$ \ Knutson--Tao--Woodward \defng{puzzles} \cite{KTW04}.

\smallskip

$\di$ \ Danilov--Koshevoy \defng{arrays} \cite{DK05a}.

\smallskip

$\di$ \ Vakil's \defng{chessgames} \cite{Vak06}.

\smallskip

$\di$ \ Thomas--Yong \defng{$S_3$-symmetric LR rule} \cite{TY08}.

\smallskip

$\di$ \ Purbhoo's \defng{mosaics} \cite{Pur08} (see also~\cite{Zin09}).

\smallskip

$\di$ \ Coskun's \defng{Mondrian tableaux}  \cite{Cos09}.

\smallskip

$\di$ \ Nadeau's \defng{fully packed loop configurations in a triangle} \cite{Nad13} (see also~\cite{FN15}).

\smallskip

\nin
The list above is so lengthy, it is worth examining carefully. Most
of these LR~rules are byproducts of (often but not always, successful)
efforts to find a combinatorial interpretation of more general numbers.
Some of these are closely related to each other, while others seem
quite different, both visually and mathematically.

Now, on the surface the RSK is nowhere in the picture.  We already
mentioned \cite{Ker84,Whi81,Zel81} which make the connection explicit,
but here is a quick outline of how to get the original LR~rule.

Start with a skew shape \ts $\la/\mu$,
and run the jeu-de-taquin on \ts $\SYT(\la/\mu)$.  The number of
times each \ts $A\in \SYT(\nu)$ \ts is the image \ts $\jdt\bigl(\SYT(\la/\mu)\bigr)$ \ts
is exactly~$c^\la_{\mu\nu}$, for all $A$ and~$\nu$.\footnote{Nothing in this
claim is obvious: from the fact that jeu-de-taquin is well defined (independent
on the order of moves), to the fact that preimage sizes are equinumerous
and equal to~$c^\la_{\mu\nu}$, see e.g.\ \cite[App.~1 to Ch.~7]{Sta99}.}
Taking the lex-smallest
such \ts $A$ (obtained by reading squares of~$\nu$ left-to-right as commonly done), gives
a combinatorial interpretation \ts $c^\la_{\mu\nu}=\big|\big\{B\in \SYT(\la/\mu)\.:\.\jdt(B)=A\big\}\big|$.

While the above combinatorial interpretation suffices to show that $\{c^\la_{\mu\nu}\}\in \SP$,
minor adjustments can be made to beautify the resulting rule.  First,
note that preimage of squares in every row of $A$ cannot be in the same column of
\ts $B=\jdt^{-1}(A)$, so we can relabel $A$ by placing $i$ in the squares of $i$-th row.
We get a unique \ts $A_0\in \SSYT(\nu,\nu)$.  Now, the preimage in \ts $\LR(\la/\mu,\nu):=\jdt^{-1}(A_0)\subseteq
\SSYT(\la/\mu,\nu)$ \ts can be described using the \defng{ballot condition},\footnote{The
ballot condition is often called ``lattice'' or ``Yamanouchi'' depending on the
context and how the tableau is being read. The differences between these are
inconsequential.}  giving the usual
description of \ts $\LR(\la/\mu,\nu)$.\footnote{The
LR~variation can be obtained in the same way, by doing
jeu-de-taquin to \ts $\SYT(\mu\circ \nu)$ \ts and looking at a preimage
of \ts $A\in \SYT(\la)$.  The difference is that after relabeling, all tableaux in \ts
$\jdt^{-1}(A_0)\subseteq\SSYT(\mu\circ\nu,\la)$ have the same filling of~$\nu$, which can
then be omitted from the description.  The result if the subset of $\SSYT(\mu,\la/\nu)$ with
a $\nu$-ballot condition.
}

\smallskip

We emphasize that RSK is omnipresent in the LR~study \cite{vL01}.  It helps to
approach this historically (we will try to be brief).
The LR~rule was introduced in~\cite{LR34}.
Soon after, Robinson introduced the first version of RSK in his
(incomplete) effort to prove the original LR~rule \cite{Rob38}.
As James describes in~\cite{Jam87}, the LR~rule ``is much harder
to prove than was at first suspected.''\footnote{See also his famous
``get men on the moon'' sentence \cite[p.~117]{Jam87}
(also quoted in~\cite[p.~147]{Mac95}).}

Macdonald~\cite[$\S$I.9]{Mac95} credits Sch\"utzenberger \cite{Sch77}
and Thomas's thesis \cite{Tho74} with first complete proofs.  Both were
obtained in the context of RSK and its relatives.  Since then,
many proofs of the LR~rule were discovered, too many
to be cited here, all related to RSK relatives (ibid.)  We single out
the proof in \cite{KTW04}  based on the associativity property given
by the octahedral map, and a geometry inspired proof in~\cite{BKT04}
based on the jeu-de-taquin.
Note that both use properties of RSK relatives.\footnote{
Let us also mention the \emph{involutive proofs} of variations on the
LR~rule: \cite{BZ88,Gas98,RS98,Ste02}.
As far as we can tell, these are essentially the same ``verification type proofs''
stated in different languages. }

Given that the role of RSK is often invisible without carefully
examining the proofs of the LR~interpretations above, this brings
us to the following question:  \defna{Are these
``combinatorial interpretations'' of LR~coefficients
equivalent in some formal sense?}

{\small
\smallskip

\subsection{Little boxes all the same}{}\hskip-.2cm{}\footnote{See
 \emph{Malvina Reynolds Sings the Truth}, Columbia Records, 1967, CS9414.}
\label{ss:Sym-little}
There are several ways to formalize the question above.
First, note that all ``combinatorial interpretations''
above are naturally in $\SP$, with the exception of crystal graphs which
can have exponential size;\footnote{Arguably,
moving from the LR~tableaux to crystal graphs trades conciseness for elegance,
in roughly the same way as moving from standard Young tableaux to vectors in
the \defng{Young basis}.  While crystal graphs can be inspirational and
amenable to generalizations,
ultimately all such results can be obtained in the language of LR~tableaux
(cf.~\cite{Gal17}).
}
the issue can be fixed if one follows the bijection in~\cite{NS11}.

Second, all of these combinatorial interpretations are related to the
original LR~rule via a sequence of explicit poly-time bijections.
For example, the LR~tableaux are in bijection with:  hives and BZ
triangles \cite{PV05}, crystal graphs \cite{NS11}, Mondrian tableaux~\cite{Liu17}, etc.
This is unsurprising, perhaps, compared with parsimonious reductions
between many $\SP$-complete problems such as the number of $3$-colorings
and the number of Hamiltonian cycles, see e.g.\ \cite[$\S$18]{Pap}.

More surprising is that with few notable exceptions these bijections have
linear time complexity.  For example, when there is a natural presentation
as integer points in polytopes, these polytopes are essentially the same
and the bijection is given by a special linear map, see~\cite{PV05}.  This holds for LR~tableaux
interpreted as GT~patterns, as well as for hives, BZ~triangles and DK~arrays,
where the natural presentation is binary, see \cite{DK16,PV05}.

It is thus most surprising, that RSK is behind so many other
combinatorial interpretations, the bijection \.
$\LR(\la/\mu,\nu) \to \LR(\mu\circ \nu,\la)$ \. being the most
natural such example. Recall the (algebraically obvious)
\defng{fundamental symmetry} \ts $c^\la_{\mu\nu}=c^\la_{\nu\mu}$, which
is not transparent on the LR~tableaux or the highly symmetric BZ~triangles.
Nor does it follow from integer points in polytopes of GT~patterns since
the GZ~polytopes are asymmetric.  In other words,
proving \ts $|\LR(\la/\mu,\nu)| = |\LR(\la/\nu,\mu)|$ \ts bijectively
is rather nontrivial, and indeed is linear time equivalent to computing
the RSK map as proved in \cite{PV10}.

Applying the fundamental symmetry can double the number of
combinatorial interpretations, each time leading to a nontrivial
bijections, see e.g.\ \cite{ACM09} for puzzles and \cite{TKA18} for
hives.\footnote{Another way to  double  the number of
combinatorial interpretations is to use the \defng{conjugation symmetry} \ts
$c^\la_{\mu\nu} = c^{\la'}_{\mu'\nu'}\ts.$ This symmetry is easy to prove
algebraically.  It was proved
bijectively in~\cite{HS92}, and the authors attributed to Dennis White
a connection to the jeu-de-taquin.  It makes sense only for the unary encoding.}
It follows from~\cite{PV10}, that the RSK is behind them all. One can argue that
the $S_3$-symmetric LR~rule already has the fundamental symmetry ``embedded''
into the rule.  But the way the rule is constructed, to \emph{verify} that the
combinatorial interpretation is valid one needs to \emph{perform}
the~RSK.

Let us mention an experiment we made in \cite[$\S$7]{PV10},
where we used ingredients from different bijections to cook up four$\.(!)$
 bijections proving the fundamental symmetry for LR~tableaux.
In the scientific method style, we conjectured all four to be identical
without much of any checking \cite[Conj.~1]{PV10}. This conjecture is now
completely proved by a combination of results in \cite{DK05a} and \cite{TKA18}.
We followed the same approach to conjecture that two versions of the octahedral
map coincide \cite[Conj.~3]{PV10}; this was later proved in~\cite{HK06}.

Finally, we note a negative sort of evidence: as soon as one needs a different
kind of combinatorial interpretation for the LR~coefficients which does not involve
the RSK, nothing emerges.  This is why both Open Problems~\ref{op:tab-PPY}
and~\ref{op:tab-LPP}  are so challenging, cf.~$\S$\ref{ss:Kron-where-to-look}$\ts{}(\ddag)$.

}
\smallskip

{\small

\subsection{What gives?}  \label{ss:Sym-gives}
Combinatorics of the symmetric group is so vast, it is easy to get lost.
There are thousands of papers, hundreds of bijections, and dozens of
combinatorial interpretations which we cannot possibly mention here
for the lack of space and limited lifespan.  And yet, we claim that
there is a unifying principle for a large part of the field.

Fundamentally, the Algebraic Combinatorics is the study of bases in
symmetric spaces via combinatorics of transition matrices between them.
The dimensions of these symmetric spaces tends to be exponential
(think $S_n$ irreps, tensor powers, cohomology ring of Grassmannians, etc.)
Thus, we need to be able to handle not only the exponential size bases, but also
the exponential size of vectors in these bases.

In fact, most \emph{natural bases} of these spaces do have vectors with exponential
size support (think Young bases, Schur functions, Schubert polynomial, etc.)
Fortunately, the components of the whole vector can often be computed from
\emph{name of the vector} (think of semistandard Young tableaux from
Young diagrams for Schur functions, or RC-graphs from permutations).
The coefficients are not necessarily positive (e.g.\ tabloids in the
Young basis of $S_n$ irreps can have alternating signs), which is
why given a choice it is best to use positive bases, e.g.\ work with
Schur functions rather than Young bases.

Now, when applying \emph{operations} to our symmetric spaces, one needs to
be able to extract the standard symmetric bases out of these new large spaces.
Since one cannot work with exponential size vectors, the symmetry must be
traded for a concise presentation of the lex-smallest vector (in principle,
any orbit representative of the underlying group of symmetries).  For example,
this easily leads to standard Young tableaux as lex-smallest vectors among
tabloids of a given shape, see e.g.\ \cite[$\S$1.6]{Sag01}.  We refer
to \cite{KR84} for more on this philosophy from the Invariant Theory point
of view.

To see a more interesting example, consider the left-right action of
\ts $G\times G$ \ts on \ts $\cc[G]$, which has a simple spectrum as the
sum of \ts $\pi \otimes \pi$ \ts over all irreducibles.  For \ts $G=S_n$ \ts
one is then tempted to look for how lex-smallest components of vectors
in the invariant subspaces of \ts $\cc[S_n]$, leading to the
\defng{straightening} that Rota liked to much.  The details behind the
RSK correspondence and the LR~rule are more technical, but the underlying
story is similar and not much more complicated.  And as we emphasize
earlier, this all comes down to the miracle of RSK and its relatives.

What is amazing here is not that the resulting algorithm is nice and
interrelated, but that it is correct.  The former is a property of
the underlying algebra, while the latter is a combinatorial miracle
behind~RSK.  We don't need to work hard to imagine a world where
RSK does not exists -- the (original) straightening is not
in poly-time, or at least not obviously so (this is due to unexpected
sign cancellations).  If not for the Schur--Weyl duality we could be
stuck there for a long time, at least until a quantum version was
discovered.

For a more recent example, consider a interesting story of three
papers by Thomas--Yong \cite{TY18} and Pechenik--Yong \cite{PY17a,PY17b},
where the authors first introduce a generalization of jeu-de-taquin
to obtain \emph{some} combinatorial interpretation for the theory they
wanted, and then developed \emph{another} generalization of jeu-de-taquin
to prove \emph{the desired} combinatorial interpretation (conjectured earlier
by Knutson and Vakil, see~\cite[$\S$5]{CV09}).

\smallskip

We finish this section on a positive note.  If one \emph{wants} to
find a combinatorial interpretation for Kronecker coefficients, in my
opinion one would need to find an appropriate generalization of~RSK.
It is even clear how to start --- the map should be from 3-dim contingency
arrays into triples of semistandard Young tableaux, so that the number of
elements in preimage is always \ts $g(\lambda,\mu,\nu)$, which would then
have a combinatorial interpretation as the number of certain contingency
arrays.

The idea would be to
exploit the \defn{generalized Cauchy identity}
$$
\prod_{i,\ts j,\ts k} \. \frac{1}{1\. - \. x_i \ts y_j \ts z_{k}} \ = \
\sum_{\lambda, \ts \mu, \ts \nu} \. g(\lambda,\mu,\nu)\. s_{\lambda}(\bx)\ts s_{\mu}(\by)\ts s_{\nu}(\bz)
$$
(see e.g.~\cite[Exc.~I.7.10]{Mac95} and~\cite[Exc.~7.78]{Sta99}).
Unfortunately, the structure of such arrays is more complicated than in the
2-dim case of contingency tables, so finding lex-smallest
(under the action of the triple products of symmetric groups) does not appear
to be feasible in full generality.\footnote{A closely related issue is well
known in Algebraic Statistics, see e.g. \cite[$\S$10]{Sul18}.
}
Thus, unsurprisingly, until now this approach is worked out only in a
few special cases where lex-smallest contingency arrays are easy to distinguish,
see e.g.\ \cite{Val00} and \cite[$\S$2]{IMW17}.\footnote{Although we personally
don't expect this can be done in full generality (otherwise this would have
been done by now), we believe in telling both sides of the story.}


}

\bigskip



\section{How to prove a negative?}\label{s:no}


By the title of this section we mean: \defna{How to prove that a given function does not have a
combinatorial interpretation?}  Unfortunately, we really can't, at least
not unconditionally.  For all we know, it could be that \ts $\poly=\NP=\PSPACE$ \ts
and  \ts $\FP=\SP$.   In that case, poly-time verifiers can do magic, not just give
combinatorial interpretations.\footnote{See e.g.\ \cite{For97} for a quick review
of Counting Complexity.}  Now that we accepted the need for \emph{some}
complexity assumptions, we can proceed to discussing special cases.

\smallskip

\subsection{Nothing comes from nothing}\label{ss:no-nothing}
Let \. $f: \{0,1\}^\ast\to \nn$ \. be a function.
There is always a \defna{mathematical reason} \ts why we have \ts $f(x)\in \nn$ \ts for all \ts $x\in \{0,1\}^\ast$.
This reason could be an easy consequence from the definition, an observation,
a routine calculation, a standard result in the area, or a technically difficult
theorem.  Whether this reason can be replaced by a \defng{counting argument} \ts
is exactly the same as asking if \ts $f\in \SP$.

\smallskip

Consider some examples.  For \ts $f(x)=2^{|x|}$, the combinatorial interpretation
is ``all subsets of $[n]$,'' where $n=|x|$ is the length of the word~$x$.
For \ts $f(x) = x(x-1)(x-2)/3$, the combinatorial interpretation is
``all $3$-subsets of $[x]$ counted twice''.  Here the (easy) number
theoretic result  \ts ``$3\ts|\ts m(m-1)(m-2)$ \ts for all $m\in \nn$'' \ts
is proved by a counting argument.

In the other direction, for \ts $f(x) = 2^x$, there is no combinatorial
interpretation since the function is doubly exponential in the size of
the input.  There is still a (trivial) \defng{counting argument} \ts
here, placing this function in a counting class \ts $\SEXP$, but that
goes outside the scope of this survey.

Next, consider a polynomial \ts $\vp(x) = (x-1)^2$ \ts
and let \ts $f\in \SP$.   The function \ts $\vp(f)$ \ts is trivially
nonnegative and in \ts $\GapP=\SP-\SP$. However, there is no natural
combinatorial interpretation in this case, as we already discussed
in~$\S$\ref{ss:Main-examples}$\ts{}(4)$.  There, we used a
substitution\footnote{We elaborate below on the meaning of this substitution.}
\ts $x\gets h(G):=\#$Hamiltonian cycles in graph~$G$.  Note that if we
used a different substitution \ts $x\gets e(P):=\#$linear extensions of
a poset~$P$, \ts then $(x-1)^2\in \SP$, see~$\S$\ref{ss:Main-examples}$(1)$.
This mean that the inequality \ts $(x-1)^2\ge 0$ \ts may be
trivial analytically, but cannot be proved by a counting argument in the
worst case even if it can be proved in special cases,
as this example shows.

\smallskip

Back to the LR and Kronecker coefficients, there is a very clear
algebraic reason why we have \. $c^\la_{\mu\nu}\ge 0$ \ts  and  \ts
$g(\la,\mu,\nu)\ge 0$.
Fundamentally, it is reduced to the (easy) inequality \. $\<\zeta,\xi\>\ge 0$ \.
for every two characters \ts $\zeta,\xi$ \ts of a finite group~$G$.
It seems unlikely that this general inequality would have a
proof based on a counting argument.

Now note that we have a really good understanding why
LR~coefficients are in~$\SP$, and a very poor
understanding what why Kronecker coefficients are not
(thus, Conjecture~\ref{conj:Kron-main}).
As we mentioned above, it is neither unusual nor surprising that an
inequality can be in $\SP$ is some special cases and not in others.
Unfortunately, at the moment, the Kronecker coefficients are much
too unapproachable to admit a resolution of the problem.

\smallskip

In summary, we are concerned not so much with
whether any particular function is in~$\SP$, although we do
care about that.  Instead, we examine what proof ingredients
(of general results) can be shown to be in~$\SP$, and
refute those which are not.  This is the subject of our long
and technical paper~\cite{IP22}.  In this section we present
a brief and non-technical introduction.

\smallskip

\subsection{Polynomials}\label{ss:no-first}
A rational polynomial \ts $\vp\in \qqq[x]$ \ts is called
\defn{integer-valued} \. if \. $\vp(x)\in \zz$ \. for all \ts $t\in \zz$.
It is well known and easy to see that \ts $\vp$ \ts is integer-valued
if and only if \ts $\vp$ \ts  is an integer linear combination of binomials: \.
$\vp \in \zz\big\<1,x,\tbinom{x}{2},\tbinom{x}{3},\ldots\big\>$.
See e.g.\ \cite{CC16} for the background and many related results.

Polynomials \ts $\vp$ \ts in the semigroup \ts
$\nn\big\<1,x,\tbinom{x}{2},\tbinom{x}{3},\ldots\big\>$ \ts
are called \defn{binomial-good} (\defn{binomial-bad}, otherwise).
Binomial-good polynomials have a combinatorial interpretation for~$\vp(x)$.
Formally, for every function \ts $f\in \SP$, the function \ts $\vp(f)$ \ts
is also in~$\SP$.  For example, let \ts $f\gets h(G)$ \ts be the number of
Hamiltonian cycles in~$G$, and let \ts $\vp(x)=\frac13\ts x(x-1)(x-2)$.
Then we have \ts $\vp(f) = 2 \binom{f}{3}\in \SP$ \ts as discussed above,
since \ts $\vp(f)$ \ts counts twice the number of triples of
distinct Hamiltonian cycles in~$G$.

Let us emphasize that the combinatorial interpretation of $\vp(f)$ is
\defng{oblivious}, i.e.\ it works with the verifier for the function~$f$ as a \defng{black box} without
ever looking at the graph~$G$ or the notion of Hamiltonicity.  Note that the black
box is not allowed to \emph{compute}~$f$, and all it can do is give a combinatorial
interpretation for~$f$.  In other words, the black box verifier for~$f$ looks at \ts $x\in \{0,1\}^\ast$ \ts and
says Yes/No in poly-time, and the verifier for $\vp(f)$ works the same way by calling on the verifier for~$f$.\footnote{This notion
is completely formal. In computational complexity terminology, this says that
$\vp$ \defng{relativizes} with respect to \defng{oracle}~$f$.
We find the algorithmic notions more transparent. }

Denote by \ts $\BG$ \ts the set of binomially-good polynomials, and by
\ts $\Obli$ \ts the set of polynomials which have an oblivious
combinatorial interpretation.  From above, $\BG\subseteq \Obli$.
It was proved in \cite[Thm~3.13]{HVW95} (see also \cite[$\S$4.3]{IP22}),
that \ts $\BG = \Obli$, i.e.\ we show that
binomial-bad polynomials~$\vp$ cannot have oblivious combinatorial
interpretation.\footnote{To repeat
ourselves, we use ``oblivious'' to restrict combinatorial interpretations
of $\vp(f)$ to only those whose verifier ignores the nature of~$f$.
If you have never seen \defng{oblivious algorithms} and this notion seem
confusing, just think of \ts ``oblivious''=``nice''.}
For example,
\ts $\vp=(x-1)^2 = 2\binom{x}{2} - x +1$ \ts is binomial-bad, and for
\ts $f\gets h(G)$ \ts we do not expect a combinatorial interpretation.

We also generalize this result to multivariate polynomials, where \ts
$\vp\in \qqq[x_1,\ldots,x_k]$ \ts is integer-valued if \.
$\vp \in \zz\big\<\tbinom{x_1}{d_1}\cdots\tbinom{x_k}{d_k}\big\>$,
see \cite{Nag19}.  By analogy with the univariate case,
we say that $\vp$ is \defn{binomial-good} if \.
$\vp \in \nn\big\<\tbinom{x_1}{d_1}\cdots\tbinom{x_k}{d_k}\big\>$,
and prove \ts \ts $\BG = \Obli$ \ts for this generalization \cite[$\S$4.3]{IP22}.

\smallskip

Denote by \ts $\CI$ \ts the set of polynomials \ts
$\vp \in \qqq[x_1,\ldots,x_k]$ \ts such that \ts
$\vp(f_1,\ldots,f_k)\in \SP$ \ts for all \ts $f_1,\ldots,f_k \in \SP$.
Clearly, $\Obli \subseteq\CI$.
Our \defn{Binomial Basis Conjecture {\em (BBC)}} states that \ts $\Obli=\CI$, see \cite[$\S$4.4]{IP22}.
Since this conjecture implies \ts $\poly\ne \NP$ (ibid.), there is little hope
to resolve it even in the univariate case.  Thus, from the computational
complexity point of view, the \emph{``combinatorial interpretation problem''}
for polynomials is completely resolved.

\begin{ex}
\label{ex:no-non}
Let  $\vp := (x-y)^2$.  Clearly,
\ts $\vp(x,y)=2\binom{x}{2} + 2\binom{y}{2} -2xy +x +y \notin \BG$,
which implies that \ts $\vp\notin \CI$.
Consider the classical geometric proof of \ts $a^2+b^2\ge 2ab$ \ts
obtained by reflection of triangles as in Figure~\ref{f:obli}.
Since reflected triangles cover both $a\times b$
rectangles, the inequality follows.  The reader might want to ponder
(before reading the footnote), why does this construction not give an oblivious
combinatorial interpretation?\footnote{The difference \ts $a^2+b^2-2ab$ \ts
is the area of two triangles on the sides of the square.  To decide whether
point $(i,j)$ is in one of these triangles, you would need to determine the sign and
absolute value of $(a-b)$, which oblivious algorithms are not allowed to do.  }
\end{ex}

\vskip-.3cm
\begin{figure}[hbt]
\begin{center}
	\includegraphics[height=3.5cm]{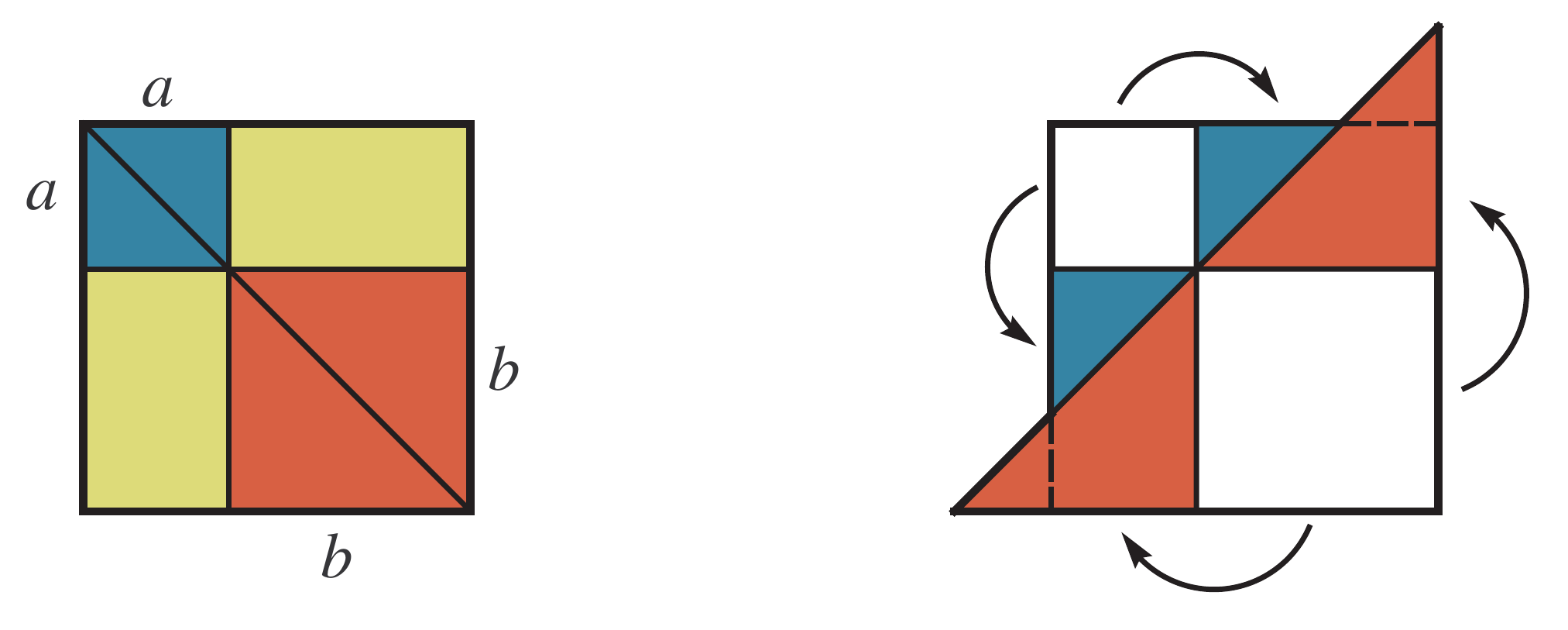}
\vskip-.3cm
\caption{Geometric proof of the inequality \ts $a^2+b^2 \ge 2ab$. Here the blue
and red triangles are reflected to completely cover yellow rectangles.}
\end{center}
\label{f:obli}
\vskip-.3cm
{}
\end{figure}

\smallskip

\subsection{Making weaker assumptions}\label{ss:no-weaker}
For some polynomials and some $\SP$ functions, the BBC
can be replaced by a weaker complexity theoretic assumption.
To explain what is going on, we need a few definitions.

Let $f, g: \{0,1\}^\ast \to \zz$ \ts be two $\SP$ functions.
We say that \ts $f$ \ts has a
\defn{parsimonious reduction} to~$\ts g$, if there is
a polynomial reduction \ts $\{0,1\}^\ast \to \{0,1\}^\ast$ \ts
which maps $f$ into~$g$.
Typically, if $f$ and $g$ count the number of solutions to
problem $A$ and $B$, the parsimonious reduction preserves the
number of solutions.

It is known, for example, that \ts {\sc \#SAT} \ts has a parsimonious
reduction to \. {\sc \#HamiltonianCycles} (and vise versa).
This means that for every {\sc SAT} formula $\Phi$, there
is a polynomially constructed graph~$G$, such that the number
of satisfying assignments of $\Phi$ is equal to $h(G)$, see e.g.\
\cite[$\S$18]{Pap}. 
The same holds for \ts {\sc \#{}\ts{}3Colorings}, \ts {\sc \#WangTilings},
and many other $\SP$-complete functions.  For the lack of
a better name, we call such $\SP$ functions \defn{counting-complete},
and use \ts $\CCF$ \ts to denote the set of such functions.\footnote{If you
like \ts {\sf NSF}, this name is both appropriate and rewarding.}

On the other hand, unless $\poly=\NP$, the function
{\sc \#PerfectMatchings} is not in~$\CCF$, since the corresponding
vanishing problem  \ts $\PM(G)=^?0$ \ts
is in~$\poly$.  The same holds for  {\sc \#LinearExtensions}.
 So while these two functions are $\SP$-complete, they are not
counting-complete.\footnote{This is also why these two problems are more
interesting, and the proof of their $\SP$-completeness is more challenging
as they cannot have a parsimonious reduction to \ts {\sc \#SAT}.}

A map \ts $\vp: \nn^k\to \qqq$ \ts is called \defn{monotone} if \ts
$\vp(a_1,\ldots,a_k)\le \vp(a_1',\ldots,a_k')$ \ts for all integer \ts $a_1\le a_1'$,
\ldots, $a_k\le a_k'$. For example, polynomials \ts $x/2$, \ts $x-1$ \ts and \ts
$x+y$ \ts are monotone, but \ts $x^2-2x$ \ts and \ts $(x-y)^2$ \ts are not.
Denote by \ts $\Mon$ \ts the set of monotone polynomials, and note that \ts
$\BG\subset \Mon$.

\smallskip

We can now state two results which allow weakening of
the ``oblivious'' assumption:

\smallskip

\nin
(1) \ Let  $\vp(x,y) = (x-y)^2 \notin \Obli$.  We prove a stronger claim in
this case, that \ts $\vp \notin \CI$ \ts unless a standard complexity
assumption fails.  Formally, we prove in  \cite[$\S$2.3]{IP22}, that
for every two (independent) functions \ts $f,g\in \CCF$,
we have \ts $\vp(f,g)\notin\SP$ \ts unless \ts $\Sigma_2^{\textsc{p}}=\PH$.

This is the strategy used in \cite{IPP22}
to prove Theorem~\ref{t:char-abs}.  Recall that by MN~rule, the character \ts
$\chi^\la(\mu)\in \GapP$,
so it can be written as \ts $(f-g)$ \ts for some (usually, not independent)
functions \ts $f,g\in \SP$.  We found some instances of \ts $(\la,\mu)$,
for which the corresponding $(f,g)$ are both independent and
counting-complete.  This implied the result.  We believe the
same approach could potentially work for Conjecture~\ref{conj:tab-inverse-Kostka}.

\smallskip

\nin
(2) \ Let \ts $\vp\in \qqq[x_1,\ldots,x_k]$ \ts be a non-monotone polynomial,
so \ts $\vp \notin \BG$.  We prove a stronger claim in this case,
that \ts $\vp \notin \CI$ \ts unless a standard complexity assumption fails.
Formally, let \ts $f_1,\ldots,f_k\in \CCF$ \ts be independent counting-complete
\ts $\SP$ \ts functions.  We prove in  \cite[$\S$2.3]{IP22}, that
\ts $\vp(f_1,\ldots,f_k)\notin\SP$ \ts unless \ts $\UP=\coUP$,
see \cite[$\S$2.4]{IP22}.  In particular,  \ts $(f-1)^2\notin\SP$ \ts
unless \ts $\UP=\coUP$, cf.~$\S$\ref{ss:Main-examples}{\small $\ts{}(4)$}.

For another  example, recall the \defn{Motzkin polynomial} \ts
$M(x,y):=x^2y^4 + x^4y^2-3x^2y^2+1$.
It follows from the AM-GM inequality applied to positive terms,
that \ts $M(x,y)\ge 0$ \ts for all $x,y\in \rr$.
On the other hand, this polynomial is famously not a \defng{sum of squares}, and is a
fundamental example in \defng{Semidefinite Optimization}, see e.g.~\cite{Ble13,Mar08}.
Now, observe that \ts $M(x,y)$ \ts is not monotone: \ts $M(0,1)=1$ \ts and \ts $M(1,1) =0$.
This gives \ts $M\notin \CI$ \ts unless \ts $\UP=\coUP$.

Note that non-monotone polynomial \ts $(x-y)^2$ \ts
has a stronger property: 
\ts $(x-y)^2\psi \notin \Mon$,
for all \ts $\psi\in \zz[x,y]$.  By contrast,
the multiple \ts $(x-1)^2\cdot x=6\tbinom{x}{3}+2\tbinom{x}{2}\in \BG$ \ts
is binomial-good, and thus monotone.

\smallskip

\subsection{Algebraic inequalities}\label{ss:no-alg}
Let \ts $\vp,\ts \psi \in \qqq[t_1,\ldots,t_k]$, so that \ts $\vp \le \psi$ \ts
for all $(t_1,\ldots,t_k)\in \nn^k$.  Suppose that polynomial \ts $(\psi-\vp)\in \SP$ \ts
and has an oblivious combinatorial interpretation.  From above, we have
\ts $(\psi-\vp)$ \ts is binomial-good.  Since inequalities are
routine building blocks across mathematics, it is worth examining
which of them are binomial good.

First, note that the inequality \ts
$a^2 + b^2 \ge 2  a b$ \ts is equivalent to the complete square case
$(1)$ above.  In \cite[$\S$7.1]{IP22}, we show that a number of standard
inequalities are also not in~$\SP$ (in a sense of substitutions),
including the \defng{Cauchy inequality},
the \defng{Minkowski inequality}, and the \defng{Alexandrov--Fenchel inequality}.
All these proofs are routine and similar to our proof of Proposition~\ref{prop:sub-PM}.

Let us single out the \defn{Hadamard inequality} for real $d\times d$  matrices:
$$
\det\begin{pmatrix}
      a_{11} & \cdots & a_{1d}\\
      \vdots & \ddots & \vdots \\
      a_{d1} & \cdots & a_{dd}
    \end{pmatrix}^2 \ \le \ \prod_{i=1}^{d}\.\bigl(a_{i1}^2\ts + \ts \ldots \ts + \ts a_{id}^2\bigr)\ts.
$$
Geometrically, it says that the volume of a parallelepiped in~$\rr^d$ is at most the
product of its basis edge lengths, with equality when these edges are orthogonal.
Note that the standard proof use eigenvalues,
see e.g.\ \cite[$\S$2.13]{HLP52} and~\cite[$\S$2.11]{BB61},
suggesting that translation into combinatorial language would be difficult.

Denote by \ts $H_d\big(a_{11},\ldots,a_{dd}\big)$ \ts the polynomial
defined by the Hadamard inequality.  Note that $H_2(a,b,c,d)\in \CI$, since
$$H_2(a,b,c,d)\, = \,
(a^2\ts +\ts b^2)(c^2\ts + \ts d^2) \. - \.
\det\begin{pmatrix}
      a & b\\
      c & d
    \end{pmatrix}^2   \, = \, (ac \ts + \ts bd)^2 .
$$
On the other hand, \ts
$H_2(a,b,c,-d) \ge 0$ \ts has different properties from our point of view.
Indeed, we have \ts $H_2(a,b,c,-d) =(ac - bd)^2 \notin \CI$ \ts unless  \ts $\Sigma_2^{\textsc{p}}=\PH$.
Finally, observe that
$$
H_3{\begin{pmatrix}
x & \binom{x}{3} & 0 \\
0 & 1 & 1 \\
1 & 0 & 1
\end{pmatrix} }
\ = \
3 \ts \tbinom{x}{1}
\. + \. 6  \ts \tbinom{x}{2}
\. - \. 3  \ts\tbinom{x}{3}
\. + \. 28 \ts \tbinom{x}{4}
\. + \. 90 \ts \tbinom{x}{5}
\. + \. 60 \ts \tbinom{x}{6} \, \notin \BG\ts.
$$
By the BBC, we have \ts $H_3 \notin \CI$.  Since \ts $H_3 \in \Mon$, this
is the best we can prove with our tools.

\smallskip

\subsection{Algebraic inequalities restricted to semialgebraic sets}\label{ss:no-semi-alg}
In many cases, one is interested in polynomial inequalities where the
variables are themselves constrained by a system of polynomial inequalities
and equations.  Our \defn{Diagonalization Theorem} \cite[Thm~6.2.1]{IP22}
gives a complete algebraic characterization of such systems with an
oblivious combinatorial interpretation.

The results in \cite{IP22} are both technical to state, difficult to prove
and hard to apply. Instead of presenting or even outlining them here,
we discuss two examples (one easy and one difficult) which give a glimpse
at our arguments.

\medskip

\nin
(1) \ The \defn{Ahlswede--Daykin {\em (AD)} inequality},
see e.g.\ \cite[$\S$6.1]{AS16}, for \ts $n=1$ \ts states:
$$
\left\{ \ts
\aligned
& x_0 \ts y_0 \. \le \. u_0  \ts v_0 \ , \ x_0 \ts y_1 \. \le \. u_0  \ts v_1 \\
& x_1 \ts y_0 \. \le \. u_0  \ts v_1  \ , \ x_1 \ts y_1 \. \le \. u_1  \ts v_1 \endaligned
\right. \quad \Longrightarrow \quad
 (x_0+x_1)(y_0+y_1) \. \le \. (u_0+u_1)(v_0+v_1)\ts,
$$
for all \ts $x_i,y_i, u_i, v_i \ge 0$, where \ts $i\in \{0,1\}$.
We prove in \cite[Prop.~2.5.1]{IP22}, that the AD~inequality does
not have an oblivious combinatorial interpretation. In other words,
we prove that \ts $\vp \notin\Obli$, where \.
$\vp\ts :=\ts (u_0+u_1)(v_0+v_1) \ts - \ts (x_0+x_1)(y_0+y_1)$ \. restricted
to the above system of four inequalities.
Below is the outline of the proof.


Following \cite[$\S$7.4]{IP22}, we
rewrite the inequality in terms of $\SP$ functions as follows:
$$
\left\{ \ts
\aligned
& \al_0  \be_0 + \xi_1 = \ga_0 \de_0 \, , \ \al_0 \be_1 + \xi_2 = \ga_0  \de_1 \\
& \al_1  \be_0 + \xi_3 = \ga_0  \de_1  \, , \ \al_1 \be_1 + \xi_4 = \ga_1   \de_1 \endaligned
\right. \ \  \Longrightarrow \ \
(\ga_0+\ga_1)(\de_0+\de_1) \ts - \ts (\al_0+\al_1)(\be_0+\be_1) \in \SP\ts,
$$
for all \. $(\al_0,\al_1,\be_0, \be_1, \ga_0, \ga_1, \de_0, \de_1, \xi_1,\xi_2,\xi_3,\xi_4) \in (\SP)^{12}$.
\ts Second, make the following substitution into these 12 functions:  \.
$\bigl(1,1,x,x,x,1,1,x,0,2\binom{x}{2},2\binom{x}{2},0\bigr)$ \ts and check that
the above system of inequalities is satisfied. Now let
\ts
$x\gets f$, where \ts $f\in \SP$.  This substitution gives \.
$\vp(f)=  (f-1)^2 \notin\BG$, and thus \ts $\vp\notin \Obli$.

\medskip

\nin
(2) \ Let \ts $\ba,\bb \in \rr^n$ be weakly decreasing,
such that \ts $\ba \trianglerighteq \bb$, see $\S$\ref{ss:tab-SSYT}.
The \defn{Karamata inequality} says
for every \defng{convex function} \ts $F: \rr^n \to \rr$, we have \ts $F(\ba) \ge F(\bb)$,
see e.g.~\cite[$\S$3.17]{HLP52} and~\cite[$\S$28, $\S$30]{BB61}.  We refer
to \cite{BP21,PPS20} for some recent applications to linear extensions
and Young tableaux, and to \cite{MOA11} for numerous generalizations
and further references.

Following \cite[$\S$7.5]{IP22}, we
rewrite the inequality in the language of $\SP$ functions.
Suppose
$$f_1 \. + \. \ldots \. + \. f_i \, = \, g_1 \. + \. \ldots \. + \. g_i \. + \. h_i \quad
\text{for all \ \ $1\le i \le n$,}
$$
where \ts $f_i,g_i, h_i\in \SP$, and \ts $h_n=0$. Suppose also
that
$$f_i \, = \, f_{i+1} \. + \. d_i\., \quad g_i \, = \, g_{i+1} \. + \. e_i\., \quad
\text{for all \ \ $1\le i < n$,}
$$
where \ts $d_i,e_i \in \SP$. Finally, let \ts $\ga: \zz\to \nn$ \ts be a
nonnegative convex function.  The \defn{Karamata function} \. $K^{n}_\ga$ \.  is defined as
$$
\rK^{\<n\>}_\ga(f_1,\dots,f_n, g_1,\ldots,g_n) \, := \,
\sum_{i=1}^{n} \. \ga(f_i) \, - \, \sum_{i=1}^{n} \. \ga(g_i)\..
$$
Clearly, \ts $\rK^{\<n\>}_\ga \in \GapP$ \ts and nonnegative by the Karamata theorem.
It is thus an interesting question if \ts $\rK^{\<n\>}_\ga \in \Obli$, i.e.\
if \ts $\rK^{\<n\>}_\ga(f_1,\ldots,g_n)\in \Obli$, i.e.\ in~$\SP$ \ts
for all \ts $f_i, g_i, h_i, d_i,e_i\in \SP$.

For example, for \ts $\ga(t) = \al t+\be$, we have \ts $\rK^{\<n\>}_\ga = 0$.  Similarly,
$$
\aligned
\rK^{\<2\>}_\ga(f_1,f_2,g_1,g_2) \, & = \, (e_1+h_1) \ts h_1 \. \in \. \SP \quad \text{for}  \ \ \ga(t)=\tbinom{t}{2}, \quad \text{and}
\\
\rK^{\<3\>}_\ga(f_1,f_2,f_3,g_1,g_2,g_3) \, & = \, (d_1+e_1) \ts h_1 \. + \. (d_2+e_2) \ts h_2 \. \in \. \SP \quad \text{for}  \ \ \ga(t)=t^2.
\endaligned
$$
It follows from here that \. $2\ts \rK^{\<3\>}_{\ga}\in \Obli$ \ts where \ts $\ga(t)=\tbinom{t}{2}$, since
linear terms cancel out.  We prove in \cite[Prop.~7.5.5]{IP22}, that \.
$\rK^{\<3\>}_\ga\notin \Obli$ \. for \ts $\ga(t) = \binom{t}{2}$.  The proof requires a computation
of lattice points in a $12$-dimensional polytope defined by linear equations and inequalities
corresponding to constraints on \ts $f_i,g_i,h_i,d_i,e_i$.

\smallskip

\subsection{What's next?}\label{ss:no-next}
By now the (exhausted) reader knows what kind of results we want to
prove --- the many \ts ``not in $\SP$'' \ts conjectures throughout the
paper. And they know how we imagine the plan of attack --- by simulating
the proofs of positivity and integrality of these combinatorial
functions with polynomial equations.

There are two main obstacles on the way.  First, the proof of
positivity and integrality can be rather involved, so distilling a
single reason and expressing it as a polynomial inequality can
be difficult.\footnote{It is now well understood how to translate
general mathematical proofs in a formal language of low degree
polynomials which can then be ``checked'' with few queries,
see \cite{A+98}. The connection is somewhat superficial as the
latter uses polynomials over finite field, while in \cite{IP22}
we work over~$\cc$.  Still, this suggests commonality of the ideas,
keeping alive the hope that such translation can be made in special cases.
}
Naturally, one would want to start with counting graphs
and linear extensions rather than Kronecker and Schubert coefficients,
as the former seem much more generic and less involved.
It may take a long time before this project can reach the latter.

Second, the family of polynomials for which we know that they don't
have an oblivious combinatorial interpretations is rather large and
seems satisfactory for applications.   But to make the final results
more accessible and convincing to the general audience, it is important to weaken
the assumptions (see~$\S$\ref{ss:no-weaker}).   This direction is
certainly worth exploring in the context of Computational Complexity.

\bigskip


{\small
\section{Counting complexity addendum}\label{s:addendum}

\subsection{We need a list} \label{ss:addendum-def}
Throughout the whole survey we tried to mention \ts \defng{$\SPr$-completeness} \ts and
\ts \defng{$\SPr$-hardness} \ts as little as possible.  There are two reasons for that:
we wanted not to distract the reader from the main problem (membership in~$\SP$),
and we wanted to minimize the confusion that invariably arises.

There is, however, a direct connection to these complexity classes.  As we mention in
$\S$\ref{ss:basic-classes}, we have \ts $\FP \subseteq \SP$, which makes
combinatorial interpretation of problems in \ts $\FP$ \ts trivial.  Naturally,
we are thus interested in problems that are not
in~$\FP$.  The best evidence that a function is not in
\ts $\FP$ \ts yet potentially in \ts $\SP$, is if a function is $\SP$-hard.
This is why it is worth checking $\SP$-hardness of functions in every conjecture
and open problem that we pose (cf.~$\S$\ref{ss:finrem-dichotomy}).

If this was about $\NP$-completeness, we would stop here and refer to
\cite{GJ79} along with some recent comprehensive list of $\NP$-complete problems
(such as \href{https://en.wikipedia.org/wiki/List_of_NP-complete_problems}{this one}
on {\tt Wikipedia}).  Unfortunately, there does not seem to be such comprehensive sources
about $\SP$-completeness.  Thus we present an annotated short list of such results,
restricted only to functions which we consider relevant to the survey.

\smallskip

\subsection{Graph theory problems}\label{ss:addendum-graphs}
We proceed roughly according to the sections in this survey.

\smallskip

\begin{enumerate}
  \item {\sc \#Monotone 2SAT} is $\SP$-complete but $\notin\CCF$.
This implies that {\sc \#VertexCover} is $\SP$-complete \cite{Val79a}.
  \smallskip
  \item {\sc \#HamiltonianCycles} is $\SP$-complete \cite{Val79a}.
  Moreover {\sc \#HC} is in $\CCF$, since the proof is based on a parsimonious bijection.
  In the context of Smiths's theorem (see~$\S$\ref{ss:Main-examples}), this remains
  true for {\sc \#HC} in cubic planar graphs \cite{LOT03}.
  \smallskip
  \item {\sc \#3Colorings} is $\SP$-complete \cite{Val79a}.
  Moreover {\sc \#3C} is in~$\CCF$,
  since the proof is based on a parsimonious bijection.
  \smallskip
  \item {\sc \#PerfectMatchings} is~$\SP$-complete via reduction to {\sc Permanent} \cite{Val79c}.
  This implies that {\sc \#$k$-Matchings}
  \ts $\{p(G,k)\}$ is $\SP$-complete (see~$\S$\ref{ss:sub-match}).
  Thus, \ts $\{f(G,k)\}$ is $\SP$-hard by telescoping.
The problem remains $\SP$-complete for subgraphs in $\zz^3$ \cite{Val79b},
and even for the number of 3-dim domino tilings \cite{PY13}.
  For planar graphs {\sc \#PerfectMathings} is in~$\FP$ by the \defng{Kasteleyn formula},
  see e.g.\ \cite[$\S$8.3]{LP86}.\\
  The \emph{vanishing problem} \ts $\{\PM(G)=^?0\}$ \ts is in~$\poly$,
  see e.g.\ \cite[$\S$9.1]{LP86}.
\smallskip
\item {\sc \#SpanningForests} \ts
$\{F(G,k)\}$ is $\SP$-complete (see~$\S$\ref{ss:sub-forests}).
This holds because the total number of
spanning forests \. $1+F(G,1)+\ldots + F(G,n-1)=T(G; 2,1)$ \. is an
evaluation of the \defng{Tutte polynomial} \ts known to be $\SP$-complete,
see e.g.~\cite[Thm~6.2.9]{Wel93}.  This implies that $\{f(G,k)\}$ is $\SP$-hard.
For a fixed $k\ge 1$, both \ts $\{F(G,k)\}$ \ts
and \ts $\{F(G,n-k)\}$ \ts are in~$\FP$~\cite{Myr92}.
\smallskip
\item {\sc \#SpanningSubgraphs} \ts is \ts $\SP$-complete \cite{PB83}.
This is an evaluation of the Tutte polynomial (see e.g.\ \cite[Ch.~X]{Bol98}).
We conjecture that the function in Conjecture~\ref{conj:sub-Bunkbed} is \ts $\SP$-hard.
\smallskip
\item {\sc IsingModelStatisticalSum} \ts is \ts $\SP$-complete \cite[Thm~15]{JS93}.
In notation of~$\S$\ref{ss:sub-Ising}, this and Proposition~\ref{p:Ising-Cor-functions}
implies that \ts $\{\Cor(v,w)\}$ \ts is $\SP$-complete.
For planar graphs, the problem is in~$\FP$ \ts by the \defng{Kasteleyn--Fisher
determinant formula} \ts \cite{Kas63,Fis66}.

\smallskip
\item
For general rational hyperplane
arrangements, the problem of counting the number of regions in the
complement is $\SP$-complete.  For membership in $\SP$, see $\S$\ref{ss:seq-Cat}.
The hardness follows from
graphical arrangements where the number of regions equal
to the evaluation \ts $|\chi_G(-1)|$ \ts of the
\defng{chromatic polynomial} (see e.g.\ \cite[$\S$3.11, Exc.~94-95]{Sta99}).
This evaluation is equal to the \defng{number of acyclic orientations} of~$G$,
known to be $\SP$-complete, see e.g.\ \cite[Thm~6.2.9]{Wel93}.
\smallskip
\item
{\sc \#LinearExtensions} \ts $\{e(P)\}$ \ts are $\SP$-complete \cite{BW91}.
Thus, the function defined by the Bj\"orner--Wachs inequality is $\SP$-complete
(see~$\S$\ref{ss:LE-BW}).  By telescoping, the function defined by the
Stanley inequality is $\SP$-hard (see~$\S$\ref{ss:LE-Stanley}).
Computing \ts $e(P)$ \ts remains $\SP$-complete for $P$ of height two,
and of width two~\cite{DP18}.  The function defined by the Sidorenko
inequality (see~$\S$\ref{ss:LE-Sid}) is conjectured to be $\SP$-complete
in~\cite[$\S$9.6]{CPP22b}.
\smallskip
\item
{\sc \#WangTilings} \ts of a square is $\SP$-complete; this follows e.g.\ from
the proof of Thm~3 in \cite{DD07} that the decision problem in $\NP$-complete.
\smallskip
\item
{\sc Volume} is $\SP$-hard via reduction to {\sc \#LinearExtensions} of \defng{order polytope} \cite{BW91}, \\
remains $\SP$-hard for \defn{zonotopes} \cite{DGH98}. \\
{\sc MixedVolume} is $\SP$-hard for boxes via reduction to {\sc Permanent} (ibid.)
\end{enumerate}

\smallskip

\subsection{Algebraic combinatorics problems with binary input}\label{ss:addendum-bin}
The type of input makes so much difference for problems in
Algebraic Combinatorics, we decided to separate them altogether
and make a clear indication in the name, so they would be impossible
to confuse.

Note that if the problem is in $\FP$ in the binary
input, then this is also true in the unary input.  Vice versa, if the problem
is $\SP$-complete or $\SP$-hard in the unary
input, then this is also true in the binary input.  Similar claims hold
for the decision problems as well.

\smallskip

\begin{enumerate}
  \item {\sc ContingencyTablesBinaryInput} \ts $\{\rT(\la,\mu)\}$ \ts  is  \ts $\SP$-complete even for \ts $\ell(\la)=2$ \ts \cite{DKM97}.  \\
  The \emph{vanishing problem} \ts $\{\rT(\la,\mu)>^?0\} \in \poly$, since it is equivalent to \ts $\{|\la|=^?|\mu|\}$.\\
  When \ts $\ell(\la)$ \ts is fixed, the problem has FPTAS \cite{G+11}. \\
  When \ts $\ell(\la), \ell(\mu)$ \ts are fixed, the problem is in $\FP$ by \cite{Bar93}.

\smallskip

\item {\sc KostkaBinaryInput} \ts $\{K_{\la\mu}\}$  \ts is  \ts  $\SP$-complete \cite{Nar06}. \\
  The \emph{vanishing problem} \ts  $\{K_{\la\mu}>^?0\} \in \poly$, since it
  is equivalent to \ts $\{\la \trianglerighteq^?\mu\}$, see $\S$\ref{ss:tab-SSYT}. \\
  The \emph{uniqueness problem} \ts
  $\{K_{\la\mu}=^?1\} \in \poly$, see a complete characterization in \cite{BZ90}.

\smallskip

\item {\sc LittlewoodRichardsonBinaryInput} \ts $\{c^\la_{\mu\nu}\}$  \ts is  \ts
$\SP$-complete \cite{Nar06}. \\
  The \emph{vanishing problem} \ts  $\{c^\la_{\mu\nu}>^?0\} \in \poly$,
  see \cite{DM06,MNS12}.\footnote{This is based on the \defng{saturation property},
  see~$\S$\ref{ss:Kron-motivation}$\ts{}(2)$, which fails for
  other root systems. Notably, for the \ts $B$--$C$--$D$ \ts Lie types,
  it holds up to a factor of two~\cite{Sam12}.  It is open whether
  the vanishing problem is in~$\poly$ in these case; this would follow from \cite[Conj.~4.7]{DM06}
  (cf.\ \cite[$\S$5.2]{GOY21} and \cite[$\S$7.4]{RYY22}).}
  \\
  The \emph{uniqueness problem} \ts  $\{c^\la_{\mu\nu}=^?1\} \in \poly$.
  More generally, $\{c^\la_{\mu\nu}=^?t\} \in \poly$ \ts \\
  for every fixed $t\ge 0$, see \cite[Thm~11.3.2]{Ike12} and \cite{Ike16}.

\smallskip

\item {\sc KroneckerBinaryInput} \ts $g(\la,\mu,\nu)$ \ts is \ts $\SP$-hard.
This follows easily from the result that \ts {\sc Littlewood-RichardsonBinaryInput}
is  \ts $\SP$-complete, and trivially from the unary case.\\
The \emph{vanishing problem} \ts  $\{g(\la,\mu,\nu)>^?0\}$ \ts is $\NP$-hard; this follows
trivially from the unary case.

\smallskip

\item {\sc ReducedKroneckerBinaryInput} \ts $\{\rg(\al,\be,\ga)\}$  \ts is  \ts $\SP$-hard, same reason as above.
\\
We conjecture that the \emph{vanishing problem} \ts  $\{\rg(\al,\be,\ga)>^?0\}$ \ts is in
$\NP$-hard.

\smallskip

\item {\sc ExcitedDiagrams} \ts $\{|\ED(\la/\mu)|\}\in \FP$ \ts via reduction to
\defng{flagged tableaux} \ts \cite[Cor.~3.7]{MPP18a}. \\
This is the number of terms of the summation in the NHLF.  The \emph{vanishing problem} is trivial.

\smallskip

\item {\sc CharacterSquaredBinaryInput} \ts $\big\{(\chi^\la(\mu))^2\big\}$ \ts is doubly exponential
and thus \\ not in~$\PSPACE$.  For example, $\chi^\la(1)=\Cat(m)=e^{\Omega(n)}$, where \ts $\la=(m,m)$ \ts
and \ts $n=2m$.  \\
The \emph{vanishing problem} \ts  $\{\chi^\la(\mu)\ne^?0\}$ \ts is \ts $\NP$-hard \cite[$\S7$]{PP17};
a stronger result follows \\ from the unary case.

\smallskip

\item {\sc HurwitzBinaryInput} \ts $\{h_{g\mu}\}$ \ts is doubly exponential
and thus not in~$\PSPACE$. \\ For example, $h_{0 \ts (n)} = n^{n-2}$.

\end{enumerate}

\smallskip

\subsection{Algebraic combinatorics problems with unary input}\label{ss:addendum-unary}
This is the most interesting case, and the one we discuss throughout
the paper.\footnote{To indicate unary input, the literature often refers
to ``strong'' $\NP$- and $\SP$-completeness, see \cite{GJ78} and \cite[$\S$8]{Vaz01}.
We find this terminology misleading and best to be avoided,
as some results become weaker while others stronger when the input size changes. }
To minimize the overlap, we don't include here some of the
poly-time result which hold already for the binary case.

\smallskip

\begin{enumerate}
\item {\sc ContingencyTablesUnaryInput} \ts $\{\rT(\la,\mu)\}$ \ts
is not known to be  \ts $\SP$-complete, \\  see \cite[$\S$1.1]{DO04}
and \cite[$\S$8.1]{PP17}.  We conjecture this to be true. \\
When \ts $\ell(\la)$ \ts is fixed the problem is not \ts $\SP$-complete
unless \ts $\poly=\NP$.\footnote{This follows from having FPTAS in the binary input,
see e.g.~\cite[$\S$8.3]{Vaz01}.}

\smallskip

\item {\sc KostkaUnaryInput} \ts $\{K_{\la\mu}\}$  \ts is not known to be  \ts $\SP$-complete
\cite[$\S$8.1]{PP17}.  \\ We conjecture this to be true.
This would follow from the conjecture that \\ {\sc \#ContingencyTablesUnaryInput}
is $\SP$-complete, via the reduction in~\cite{Nar06}. \\
Moreover, we conjecture that \ts $\big\{K_{\la\ts (2^a1^b)}\big\}$ \ts is $\SP$-complete.

\smallskip

\item {\sc LittlewoodRichardsonUnaryInput} \ts $\{c^\la_{\mu\nu}\}$  \ts  is conjectured to be  \ts $\SP$-complete
\cite[$\S$8.1]{PP17}.  This would follow from the conjecture that  \ts
{\sc KostkaUnaryInput} is \ts  $\SP$-complete, via the reduction in \cite{Nar06}.
The \emph{vanishing problem} \ts has a \defng{recursive description}, see \cite[Prop.~9]{Zel99}. \\
The \emph{vanishing} \ts of \defng{generalized LR~coefficients} (tileability using Knutson--Tao~puzzles \\
regions in the triangular lattice with given boundary), is $\NP$-complete \cite{PY14}.

\smallskip

\item {\sc KroneckerUnaryInput} \ts $g(\la,\mu,\nu)$ \ts is \ts $\SP$-hard.  This follows from  \cite{IMW17}. \\
%
The \emph{vanishing problem} \ts  $\{g(\la,\mu,\nu)>^?0\}$ \ts is $\NP$-hard, ibid. \\
We conjecture that \ts $\{g(\la,\la,\la) \. : \. \la=\la'\}$ \ts is $\SP$-hard, cf.\
Conjecture~\ref{conj:Kron-triple} and~\cite{PP22},  \\
and that \ts $\{g(\la,\la,\mu) \. : \. \mu=(n-k,k)\}$ \ts is also $\SP$-hard, cf.\
Remark~\ref{r:two-row} and~\cite{PP14}.\footnote{The last conjecture was suggested
by Greta Panova (personal communication, Sep.~2022). }

\smallskip

\item {\sc ReducedKroneckerUnaryInput} \ts $\rg(\al,\be,\ga)$  \ts is  \ts $\SP$-hard \cite{PP20}. \\
%
The \emph{vanishing problem} \ts  $\{\rg(\al,\be,\ga)>^?0\}$ \ts is conjectured to be \ts
$\NP$-hard in \cite[$\S$4.4]{PP20}.

\smallskip

\item {\sc CharacterSquaredUnaryInput} \ts $\big\{(\chi^\la(\mu))^2\big\}$ \ts is $\SP$-hard \cite{Hep94}, \\
and \ts $(\chi^\la(\mu))^2\notin\SP$ \ts unless \ts $\Sigma_2^{\textsc{p}}=\PH$,
see Theorem~\ref{t:char-abs}.  \\
The \emph{vanishing problem} \ts  $\{\chi^\la(\mu)=^?0\}$ \ts is \ts $\CeqP$-complete, and thus $\NP$-hard \cite{IPP22}. \\
The \emph{positivity problem} \ts  $\{\chi^\la(\mu)\ge^?0\}$ \ts is \ts $\PP$-complete, and thus $\PH$-hard, ibid.

\smallskip

\item {\sc InverseKostkaUnaryInput} \ts $\big\{K^{-1}_{\la\mu}\big\}$  \ts has not been studied.
We conjecture it is $\SP$-hard. \\ For the \emph{vanishing problem} \ts  $\{K^{-1}_{\la\mu}=^?0\}$,
we conjecture it is \ts $\CeqP$-complete (cf.\ Conjecture~\ref{conj:tab-inverse-Kostka}).

\smallskip

\item {\sc BruhatOrderIdeal} \ts $\{B(\si)\}$, where \ts $B(\si):=|\{\om\in S_n \.:\. \om \preccurlyeq \si\}|$,
is \ts $\SP$-complete~\cite{DP18}. \\ This problem is equivalent to \ts {\sc \#LinearExtensions} \ts
 for permutation posets: \ts $B(\si)=e(P_\si)$.

\smallskip

\item {\sc ReducedFactorizations} \ts $\{r(w)\.:\. w \in S_n\}$ \cite{Sta84}, are defined as
$$
\qquad r(w)\. := \. \#\big\{\ts(i_1,\ldots,i_\ell) \,: \, (i_1,i_1+1)\cdots (i_\ell,i_\ell+1)=w,
\. 1\le i_j< n, \. \ell=\inv(\om)\ts \big\}.
$$
The problem is conjectured to be \ts $\SP$-complete \cite[$\S$8.5]{DP18},
cf.~\cite[$\S$6]{MPY22}.\footnote{\cite[$\S$6]{MPY22} observes that \ts $\{r(w)\}\notin\SP$ \ts
when permutations are presented in binary via the \defng{Lehmer code}.
}  When \ts $w\in S_n$ \\ is
\defng{vexillary} ($2143$-avoiding), we have \ts $\{r(w)\}\in \FP$ \ts from
\cite[Cor.~2.8.2]{Man01} and the HLF.

\smallskip

\item {\sc SchubertCoefficient} \ts $c^{w}_{uv}$ \ts is not known to be $\SP$-hard.\footnote{The argument
in~\cite[p.~885]{MQ17} claiming that \ts $\{c^{w}_{uv}\}$ is $\SP$-complete via reduction to
\ts $\{c^\la_{\mu\nu}\}$ \ts is erroneous as it conflates the input sizes.
The authors acknowledge the mistake (personal communication, 2022).} We conjecture this to be true. \\
The \emph{vanishing problem} \ts $\{c^{w}_{uv}>^?0\}$ \ts is not knows to be \ts $\NP$-hard
\cite[$\S$4]{ARY13}.

\smallskip

\item {\sc SchubertKostka}  \ts $\{K_{ua}\}$ \ts and \ts {\sc \#RC-graphs} \ts
$\{\Sch_u(1)=\sum_{a} K_{ua}\}$ \ts have not been studied. \\ We conjecture that both are \ts $\SP$-complete.

\smallskip

\item {\sc IncreasingTableaux} \ts $\big\{g^\la := |\IT(\la)|\big\}$, where an
\defn{increasing tableau} \ts $A\in \IT(\la)$ \ts is a plane partition
strictly increasing in rows and columns, and no gaps between $1$ and maximal
entry of~$A$, see \cite{TY09}.  Note that \ts $g^\la \ge f^\la$.
This problem \ts $\{g^\la\}$ \ts
was proposed in \cite[$\S$1.3]{TY11},
and conjectured to be $\SP$-complete in \cite[$\S$7.10]{MPP22}.

\smallskip

\item {\sc SetValuedTableaux} \ts $\big\{s(\la,k):=|\SVT(\la,k)|\big\}$, where
a \defn{set-valued tableau} \ts $A\in \SVT(\la,k)$ \ts is a surjection from \ts
$[k]$ \ts to squares of Young diagram~$\la$, s.t.\ the numbers increase in rows
and columns of the image, see \cite{Buch02}.  Complexity of \ts $\{s(\la,k)\}$
\ts was asked in \cite[$\S$5.2]{MPY22} and \cite[$\S$5.7]{HMMS22}.  We conjecture
that \ts $\{s(\la,k)\}$ \ts is $\SP$-complete.

\smallskip

\item {\sc HurwitzNumberUnaryInput} \ts $\{h_{g\mu}\}$ \ts is $\SP$-complete
\cite{PP22+}.

\end{enumerate}

\smallskip

\subsection{Related problems}\label{ss:addendum-extra}
Here are a few additional problems we think are worth solving (all in unary).

\smallskip

\begin{enumerate}
\item  Let \ts $\La \ssu \nn^3$ \ts be a \defng{3-dim Young diagram}.
Denote by \ts $P_\La$ \ts
the corresponding poset.  We conjecture that \ts $\{e(P_\La)\}$ \ts
is \ts $\SP$-complete. In fact, we conjecture this holds for \ts $\La$ \ts
of height two.

\smallskip

\item  Let \ts $\La \ssu \nn^3$ \ts be a \defng{3-dim Young diagram}.
We conjecture that the number of domino tilings of~$\La$ is \ts
$\SP$-complete. Again, we conjecture this holds for \ts $\La$ \ts
of height two.

\smallskip

\item  Let \ts $\cA=(A_1,\ldots,A_n)$ \ts and \ts $\cB=(B_1,\ldots,B_n)$ \ts
be two collections of points in~$\nn^3$.  Denote by \ts $f(\cA,\cB)$ \ts
the number of collections \ts $(\ga_1,\ldots,\ga_n)$ \ts of
nonintersecting shortest paths \ts $\ga_i: A_i \to B_i$, \ts $1\le i \le n$.
We conjecture that \ts  $\big\{f(\cA,\cB)\big\}$ \ts is $\SP$-complete.

\smallskip

\item  Let \ts $\rG(d_1,\ldots,d_n)$ \ts be the number of simple graphs with
given degree sequence.  We conjecture that \ts $\{\rG(d_1,\ldots,d_n)\}$ \ts is
$\SP$-complete.
For many related results and further references, see e.g.~\cite{Wor18}.
The \emph{vanishing problem} \ts $\{\rG(d_1,\ldots,d_n)>^?0\}$ \ts
is in $\poly$ even in binary, by the \defng{Erd\H{o}s--Gallai theorem} (see e.g.\ \cite{AL05}).
\smallskip

\item  Let \ts $G\ssu S_n$ \ts be a permutation group given by its generators.
Recall that the size \ts $|G|$, the size of the commutator \ts $|[G,G]|$ \ts
and many other functions are in~$\FP$, see e.g.~\cite{Ser03}.
What about the number \ts $c(G)$ \ts of \defng{conjugacy classes}?

\smallskip

\item  We conjecture that tileability of simply-connected regions in $\rr^2$
with the unit square and the unit edge equilateral triangle is $\NP$-complete
(rotations are allowed, see~\cite[App.~1]{Zin09}), and that the number of such
tilings is $\SP$-complete.

\smallskip

\item  Let \ts $Q\ssu \rr^2$ \ts be a centrally-symmetric polygon with integer
side lengths, and let $T$ is a fixed set of rhombi tiles with unit sides.
We conjecture that the number of tilings of $Q$ with $T$ is
$\SP$-complete.\footnote{For the connection to
reduced factorizations, see~\cite{Eln97}.}  Note that the existence of such
tilings is in~$\poly$, see e.g.~\cite{KS92}. We refer to \cite[$\S$5]{Ken93}
and \cite[$\S$5.3, $\S$7]{Pak03} for some background.

\smallskip

\item  We conjecture that for \defng{binary matroids} \ts represented by vectors in $\ff_2^n$,
the number of bases is
$\SP$-complete.\footnote{Despite claims in \cite{Ver98},
this problem is unresolved since Vertigan's proof remains unwritten, and even
the proof idea is unavailable (Dirk Vertigan, personal communication, April~2010).}
Note that for \defng{paving matroids} \ts represented by cycles, and for
bicircular matroids, the number of bases
is $\SP$-complete, see \cite[$\S$3]{Jer06} and \cite[$\S$3]{GN06}.

\end{enumerate}

}
\bigskip

{\small
\section{Proofs}\label{s:proofs}

\subsection{Proof of Proposition~\ref{p:seq-tri}}\label{ss:proofs-Prop-tri}
Let $\cT_n$ be the set of \defn{rooted plane triangulations} on~$n$
vertices, and let $b_n:=|\cT_n|$.  Here the root in a triangulation
$G=(V,E)$ is a flag $(v,e,F)$, where $v\in V$, $e=(v,w)\in E$ and $F$
is a face in~$G$ containing~$e$.
\defng{Tutte's product formula} \ts shows that $\{b_n\}$ can be computed in
poly$(n)$ time, see e.g.~\cite{Sch15}.
Moreover, Poulalhon and Schaeffer~\cite{PS06} gave a bijective proof
of Tutte's formula, by constructing a bijection $\Phi: \cT_n \to \cB_n$
between rooted plane triangulations on~$n$ vertices
and certain \emph{balanced plane trees} with two additional markings.
It follows from the construction that both $\Phi$ and $\Phi^{-1}$ can
be computed in $O(n)$ time.

Let $\Ga=\Aut(G)$ be the group of automorphisms of graph~$G$.
It is easy to see that the stabilizer subgroup $\St_G(v,e,F)=1$,
because rooted triangulations have a unique topological embedding
into a sphere.  In particular, this implies that \ts $|\Ga|\le 4|E| = O(n)$.
Recall that for all planar graphs, the effective
graph isomorphism can be done in linear time~\cite{HW74}.  In summary,
for each $e'=(v',w')\in E$, it can be decided in poly$(n)$ time whether
there is an automorphism $g\in \Ga$ s.t.\ $g\cdot v=v'$ and $g\cdot w=w'$.
Moreover, when it exists such $g$ can be computed explicitly.

Now, a combinatorial interpretation for $\{a_n\}$ can be constructed as follows.
Let $t \in \cB_n$.  Use the \defng{Depth First Search} (DFS)
around $t$ to obtain a unique labeling of vertices of~$t$.  Use $\Phi^{-1}$ to
transferred this labeling onto $\tau:=\Phi^{-1}(t)\in \cT_n$. Use the argument above
to compute all $O(n)$ relabelings $\tau'$ of~$\tau$.
For all such $\tau'$, check if they are isomorphic to~$\tau$, and if
not discard such~$\tau'$.  Compute $\Phi(\tau')$ for all such rooted
triangulations.  We obtain exactly $|\Ga \cdot \tau|=O(n^2)$ balanced plane trees.
Accept~$t$ if it is lex-smallest, and reject otherwise.  The details are
straightforward.  \qed

\subsection{Proof of Proposition~\ref{prop:char-dist}}\label{ss:proofs-char-dist}
Recall that \. $\rho^{(n)} =  1\uparrow_{C_n}^{S_n}\ts$, where  $C_n\simeq \zz_n$
is the usual cyclic subgroup of~$S_n$.
Let \ts $\mu=(\mu_1,\ldots,\mu_\ell)\vdash n$ \ts be a partition into
distinct parts: \ts $\mu_1> \ts\ldots \ts > \mu_\ell>0$.
We have:
$$\aligned
\rho^\mu \ & = \ 1\uparrow_{C_{\mu_1} \times \. \cdots \. \times C_{\mu_\ell}}^{S_n}  \ = \
\rho^{(\mu_1)}\otimes \. \cdots \. \otimes \rho^{(\mu_\ell)} \uparrow_{S_{\mu_1} \times\. \cdots \. \times S_{\mu_\ell}}^{S_n}\\
& = \ \sum_{\la\vdash n} \. \Bigg[ \sum_{\nu^{(1)}\vdash \mu_1} \.
\. \ldots \. \sum_{\nu^{(\ell)}\vdash \mu_\ell}  \, c^{\la}_{\nu^{(1)}, \. \ldots \., \. \nu^{(\ell)}} \,
a_{\nu^{(1)}\ts (\mu_1)} \. \cdots \. a_{\nu^{(\ell)}\ts (\mu_\ell)} \Bigg]\, \chi^\la\,,
\endaligned
$$
where the \defn{generalized LR~coefficient}
$$
c^{\la}_{\nu^{(1)}, \. \ldots \., \. \nu^{(\ell)}} \ := \ \sum_{\tau^{(1)},\ldots,\tau^{(\ell-2)}} \. c^{\la}_{\nu^{(1)} \tau^{(1)}}
c^{\tau^{(1)}}_{\nu^{(2)} \tau^{(2)}} \. \cdots \. c^{\tau^{(\ell-2)}}_{\nu^{(\ell-1)} \nu^{(\ell)}}
$$
are written as sums of products of LR~coefficients.  Combining~\eqref{eq:KW} and the LR~rule, this 
gives a (rather cumbersome) combinatorial interpretation of the
multiplicity \ts $a_{\la\mu}$ \ts
of~$\chi^\la$\ts.  This is clearly in~$\ts\SP$, which proves the first part.
The second part follows from the same argument, with
rows $(\mu_i)$ replaced by rectangles with distinct lengths.\qed

}

\bigskip

{\small

\section{Final remarks}\label{s:finrem}

\subsection{} \label{ss:finrem-hist}
The term \defn{combinatorial interpretation} \ts seems to be
relatively recent and was first used by Hardy \cite[$\S$6.9]{Har40},
in connection with the \defng{Rogers--Ramanujan identities}.  There, the
meaning was literal, to say that RR~identities can be restated in a
combinatorial language.  The first modern usage was by Kaplansky and
Riordan \cite[p.~262]{KR46}, to say that \defng{Stirling numbers of second kind} \ts
have a ``combinatorial interpretation'' as the number of \defng{rook placements}
on the staircase shape.\footnote{This paper was extremely influential, and the
result can be found e.g.\ in \cite[Cor.~2.4.2]{Sta99}. }

Part of the reason is linguistic.  For example, MacMahon used plenty of
``interpretations'' in his celebrated \emph{Combinatory Analysis}~\cite{Mac15}, so
the term ``combinatory interpretation'' can be found in several papers,
e.g.\ in \cite{Ing26}.

\subsection{} \label{ss:finrem-dichotomy}
There is a reason we are so cavalier with many $\SP$-hardness
conjectures in Section~\ref{s:addendum}.  Roughly speaking, this is because
the universe of interesting combinatorial $\FP$ functions is quite small,
and is reduced to:

\smallskip

\begin{itemize}
\item[$\circ$] \defn{\bf dynamic programming} (see e.g.\ \cite[$\S$15]{CLRS09}),

\item[$\circ$]
\defn{\bf determinant formulas}, e.g.\ \eqref{eq:AFDF}, the \defng{matrix-tree theorem} (MTT), the  \defng{Lindstr\"om--Gessel--Viennot theorem}
(LGV, see e.g.\ \cite[$\S$5.4.2]{GJ83}),
and the \defng{Kasteleyn formula} (cf.~$\S$\ref{ss:addendum-graphs}),\footnote{Except for the MTT,
the clue to all of these is \emph{planarity}.  Note that the \eqref{eq:AFDF} follows from the LGV
and the limit argument.  The bijection in~\cite{KPW00} shows
that the MTT implies the Kasteleyn formula.}

\item[$\circ$]
\defn{\bf explicit formulas}, e.g.\ \eqref{eq:HLF} and \defng{MacMahon's box formula}
(see e.g.\ Eq.~(7.109) in~\cite{Sta99}).\footnote{As we mentioned earlier (see~$\S$\ref{ss:Sym-accidents}),
both of these formulas are derived from determinant formulas.}

\end{itemize}

\smallskip

\nin
If your function clearly does not belong to either of these
(not well-defined) classes, there is a
good chance it is not in~$\FP$.  Although in principle there are
are \defng{intermediate} counting classes between $\FP$ and~$\SP$ (unless $\FP=\SP$),
in practice no one has seen them.\footnote{Joshua Grochow
\href{https://cstheory.stackexchange.com/a/25410/16704}{proposed}
the number of graph isomorphisms as
an intermediate function; that was before Babai's breakthrough \cite{Bab18}.
Also, a well-known expert once suggested to us (personal communication),
that computing the number \ts $\rT(\la,\mu)$ \ts of contingency tables might be intermediate (in unary); we disagree.}
Various dichotomy $\NP$-completeness results only reinforce this belief,
see e.g.\ classic paper~\cite{HN90} and the most recent breakthrough~\cite{Zhuk20}.
In summary: \emph{If you don't see how to compute a function using standard approaches,
then most likely it is provably hard to do.}

\subsection{} \label{ss:finem-TLDR}
We have used a lot of space presenting our arguments in support of our
conjectures that many combinatorial functions in Algebraic Combinatorics
are (probably) not in~$\SP$.  Here is a quick summary of our arguments, TLDR style:

\smallskip

\begin{itemize}
\item[$\circ$]
\defna{\bf Are you smarter than everyone?} \ts
So many people worked on these problems, if there was a combinatorial interpretation
it would have been discovered by now.

\smallskip

\item[$\circ$]
\defna{\bf You can't get there if you are going in the wrong direction.} \ts If everyone in
a community thinks a problem has a positive solution and they can't even agree what
would it mean to have a negative solution, there is a chance the problem never gets solved.
At the very least the community should hedge and pursue both directions.

\smallskip

\item[$\circ$]
\defna{\bf There is already one miracle. Why are you expecting another one?} \ts
The LR~rule and all its variations are magical.  But they are caused by one true miracle:
the~RSK.  All the variations on the theme (jeu-de-taquin, octahedral map, etc.),
are equivalent in a formal sense, and thus simply the RSK in disguise.  Given the
scarcity of miracles, it seems unreasonable to hope for a positive solution without
using the RSK or its relative in an essential way.

\smallskip

\item[$\circ$]
\defna{\bf Nothing comes from nothing.} \ts The proof that a function is positive or integral
is based on a sequence of arguments.  For certain type of arguments that are given by
algebraic inequalities, we can show they do not have oblivious combinatorial
interpretations.
So either your favorite function is very special and its proof avoids
all such arguments, or you need another proof.
\end{itemize}

\smallskip

\subsection{}  \label{ss:finem-california}
While writing this survey, we had an unmistakable feeling of
mapping the \defng{terra incognita}.  This reminded us of Nicolas Sanson's 1650 map
of  North America, where the author made a logical leap and extend shore lines
to create the \defng{Island of California}, see below.\footnote{This image is in public domain.
High resolution version is available from \ts
{\sf \href{https://commons.wikimedia.org/wiki/File: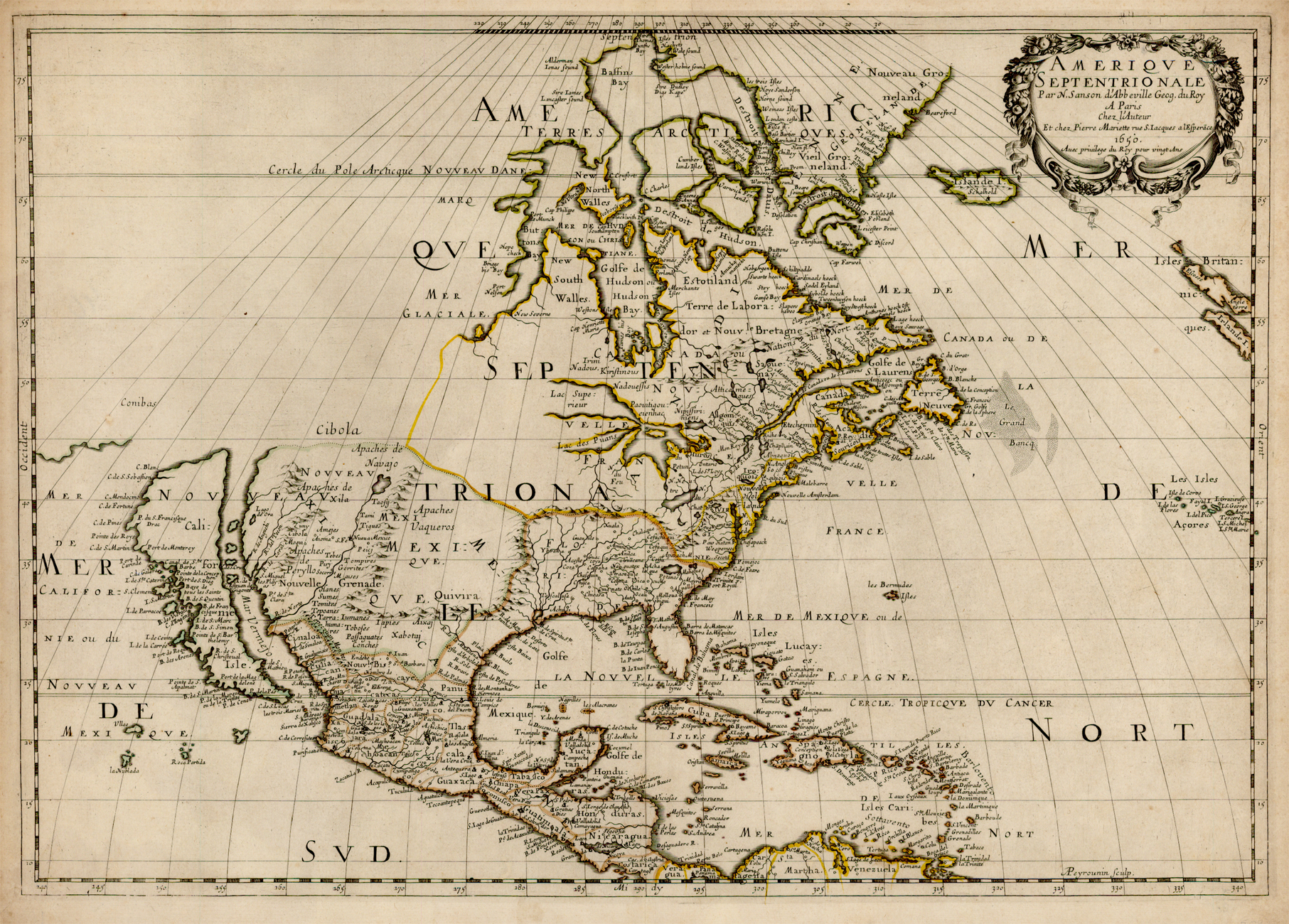}{Wikimedia Commons}.}}
You can imagine why he did it, but it's still an important error which took about fifty years
to correct.  We can't wait to find out if it's us who are making unjustified
logical leaps, or it's others who have been using a wrong map.  We just hope
it will take less than fifty years.

 \newpage

\ {}

\vskip.2cm

\begin{figure}[hbt]
		\includegraphics[height=4.cm]{Insel_Kalifornien_1650}
		\label{f:map}
\end{figure}

\vskip.9cm

\subsection*{Acknowledgements}
We dedicate this survey to the memory of Gian-Carlo Rota whose teaching
and philosophy influenced both our approach to the subject and our
view of the world.  We are embarrassed that it took so long for us to
recognize his influence.

This paper would not exist without our recent work with Christian
Ikenmeyer and our extensive collaboration with Greta Panova.
We have also benefitted from closed collaboration
with Swee Hong Chan, Sam Dittmer, Scott Garrabrant, Alejandro Morales,
Danny Nguyen, Ernesto Vallejo, Jed Yang and Damir Yeliussizov.
We are deeply grateful for the opportunity to work with them all.

We are also thankful for helpful discussions on the subject with
Scott Aaronson,
Olga Azenhas, Cyril Banderier, Sasha Barvinok, Sara Billey, Art\"em Chernikov,
Jes\'us De Loera, Michael Drmota, \'Eric Fusy, Pavel Galashin, Nikita Gladkov,
Darij Grinberg, Josh Grochow, Leonid Gurvits, Zach Hamaker, Sam Hopkins, 
Mark Jerrum, Allen Knutson, Christian Krattenthaler, Greg~Kuperberg, 
Svante Linusson, Tyrrell McAllister, F\"edor Petrov, Vic Reiner, 
Yair Shenfeld,  Richard Stanley, Josh Swanson, Ramon van Handel, 
Dennis White, Nathan Williams, Alex Yong and Paul Zinn-Justin.
We thank Vince Vatter for graciously sharing with us~\cite{Vat18}.

Some of this material was accumulated over the years based
on numerous conversations.  We sincerely apologize to anyone
we forgot to mention.  The author was partially supported by the~NSF.

}



\vskip1.1cm

{\footnotesize

}

\end{document}